\documentclass[12pt,amstex]{amsart}

\usepackage{epsfig}
\usepackage{amsmath}
\usepackage{amssymb}
\usepackage{amscd}
\usepackage{graphicx}
\usepackage{pstricks}
\usepackage{pst-node}

\topmargin=0pt \oddsidemargin=0pt \evensidemargin=0pt
\textwidth=15cm \textheight=21cm \raggedbottom

\setcounter{MaxMatrixCols}{30}

\newtheorem{thm}{Theorem}[section]
\newtheorem{lem}[thm]{Lemma}

\newtheorem{prop}[thm]{Proposition}
\newtheorem{ques}[thm]{Question}
\newtheorem{cor}[thm]{Corollary}
\newtheorem{ex}[thm]{Example}
\newtheorem{conj}[thm]{Conjecture}
\newtheorem{defn}[thm]{Definition}

\theoremstyle{definition}
\newtheorem{de}[thm]{Definition}
\theoremstyle{remark}
\newtheorem{rem}[thm]{Remark}

\numberwithin{equation}{section}





\input xy
\xyoption{all}

\def \N {\mathbb N}

\def \Z {\mathbb Z}
\def \R {\mathbb R}

\def \cZ {\mathcal Z}
\def \A {\mathcal A}
\def \F {\mathcal F}
\def \G {\mathcal{G}}
\def \U {\mathcal U}

\def \O {\mathcal{O}}

\def \T {\mathbb{T}}
\def \E {\mathbb{E}}

\def \Q {{\bf Q}}
\def \RP {{\bf RP}}

\def \id {{\rm id}}
\def \Ind {{\bf {\rm Ind}}}

\def \a {\alpha }

\def \ep {\epsilon}
\def \d {\delta}
\def \D {\Delta}

\def \W {\Omega}
\def\w {\omega}
\def \ov {\overline}
\def \lra{\longrightarrow}



\begin{document}
\title{infinite-step nilsystems, independence and complexity}

\author{Pandeng Dong}
\address{Wu Wen-Tsun Key Laboratory of Mathematics, USTC, Chinese Academy of Sciences and
Department of Mathematics, University of Science and Technology of China,
Hefei, Anhui, 230026, P.R. China.} \email{dopandn@mail.ustc.edu.cn}

\author{Sebasti\'an Donoso}
\address{Centro de Modelamiento Matem\'atico and Departamento de Ingenier\'{\i}a
Matem\'atica, Universidad de Chile, Av. Blanco Encalada 2120,
Santiago, Chile.} \email{sdonoso@dim.uchile.cl}

\author{Alejandro Maass}
\address{Centro de Modelamiento Matem\'atico and Departamento de Ingenier\'{\i}a
Matem\'atica, Universidad de Chile, Av. Blanco Encalada 2120,
Santiago, Chile.} \email{amaass@dim.uchile.cl}

\author{Song Shao}
\address{Department of Mathematics, University of Science and Technology of China,
Hefei, Anhui, 230026, P.R. China.} \email{songshao@ustc.edu.cn}

\author{Xiangdong Ye}\address{Department of Mathematics, University of Science and Technology of China,
Hefei, Anhui, 230026, P.R. China.}\email{yexd@ustc.edu.cn}

\thanks{P. Dong and X. Ye were
supported by NNSF of China (11071231). S. Donoso and A. Maass were
supported by Basal-CMM and FONDAP-CRG grants. S. Shao was supported
by NNSF of China (10871186) and Program for New Century Excellent
Talents in University}


\begin{abstract}
An $\infty$-step nilsystem  is an inverse limit of minimal
nilsystems. In this article is shown that a minimal distal system is
an $\infty$-step nilsystem if and only if it has no nontrivial pairs
with arbitrarily long finite IP-independence sets. Moreover, it is
proved that any minimal system without nontrivial pairs with
arbitrarily long finite IP-independence sets is an almost one to one
extension of its maximal $\infty$-step nilfactor, and each invariant ergodic
measure is isomorphic (in the measurable sense) to the Haar measure on some
$\infty$-step nilsystem. The question if such a system is uniquely
ergodic remains open. In addition, the topological complexity of an
$\infty$-step nilsystem is computed, showing that it is polynomial for each
nontrivial open cover.
\end{abstract}

\maketitle

\markboth{Infinite-step nilsystems, independence and
complexity}{Dong, Donoso, Maass, Shao, Ye}

\section{Introduction}
In this paper we introduce the notion of $\infty$-step nilsystem
and study its relationship with the concept of independence.
We also study its topological complexity. In this
section, first we discuss the motivations for this subject and then
we state the main results of the article.

\subsection{Motivations}

By a {\it topological dynamical system} (t.d.s. for short) we mean a
pair $(X,T)$, where $X$ is a compact metric space and $T:X\to X$ is
a homeomorphism.

There are several motivations for studying this subject. The first
one comes from the so called \emph{local entropy theory}, for a
survey see \cite{GY}. Each t.d.s. admits a maximal zero topological
entropy factor, and this factor is induced by the smallest closed
invariant equivalence relation containing \emph{entropy pairs}
\cite{BL}. In \cite{HY}, entropy pairs are characterized as those
pairs that admit an \emph{interpolating set} of positive density.
Later on, the notions of \emph{sequence} entropy pairs \cite{HLSY}
and \emph{untame} pairs (called scrambled pairs in \cite{H1}) were
introduced. In \cite{KL} the concept of \emph{independence} was
extensively studied and used to unify the afore mentioned notions.
Let $(X,T)$ be a t.d.s. and ${\A}=(A_1,\ldots,A_k)$ be a tuple of
subsets of $X$. We say that a subset $F\subseteq \Z_+$ is an {\it
independence set} for ${\A}$ if for any nonempty finite subset
$J\subseteq F$ and any $s=(s(j):j\in J) \in \{1,\ldots,k\}^J$ we
have $\bigcap_{j\in J}T^{-j}A_{s(j)}\not=\emptyset$. It is shown
that a pair of points $x,y$ in $X$ is a sequence entropy pair if and
only if each ${\A}=(A_{1},A_{2})$, with $A_{1}$ and $A_{2}$
neighborhoods of $x$ and $y$ respectively, has arbitrarily long
finite independence sets. Also, the pair is an untame pair if and
only if each ${\A}=(A_{1},A_{2})$ as before has infinite
independence sets. It is known that each t.d.s. admits a maximal
zero sequence entropy factor, i.e. a null factor \cite{HLSY}, which
is induced by the smallest closed invariant equivalence relation
containing sequence entropy pairs, and a maximal tame factor
\cite{KL}, which is induced by the smallest closed invariant
equivalence relation containing untame pairs. It was shown
(\cite{HLSY,KL,G07}) that a minimal null (resp. tame) system is an
almost 1-1 extension of an equicontinuous t.d.s. and is uniquely
ergodic. For a similar study see \cite{MS}. Moreover, in the
equicontinuous case the uniquely ergodic measure is measure
theoretical isomorphic to the Haar measure of the underlying Abelian
group.
\medskip

To get a better understanding of the role of the notion of
independence in t.d.s., in \cite{HLY, HLY1} the authors
systematically investigate the independence for a given collection
of subsets of $\Z_+$. For a finite subset $\{p_1,\ldots,p_m\}$ of
$\N$, the {\em finite IP-set} generated by $\{p_1,\ldots,p_m\}$ is
the set $\{\ep_1 p_1+\ldots+\ep_m p_m: \ep_i\in\{0,1\}, 1\le i\le
m\}\setminus \{0\}$. The notion of ${\rm Ind}_{fip}$-pair is
introduced and studied in \cite{HLY1}: a pair  of points $(x,y)$ in
$X$ is an ${\rm Ind}_{fip}$-pair if and only if each
${\A}=(A_{1},A_{2})$, with $A_{1}$ and $A_{2}$ neighborhoods of $x$
and $y$ respectively, has arbitrarily long finite IP-independence
sets. Among other results it is shown that the ${\rm
Ind}_{fip}$-pair relation has the lifting property, i.e. if
$\pi:(X,T)\lra (Y,S)$ is a factor map between two t.d.s. then
$\pi\times \pi({\rm Ind}_{fip}(X,T)) ={\rm Ind}_{fip}(Y,S)$, where
${\rm Ind}_{fip}(X,T)$ is the set of all ${\rm Ind}_{fip}$-pairs of
$(X,T)$. It is clear that,

\bigskip

\begin{center}
$
 \begin{psmatrix}[colsep=2cm,rowsep=0.3cm]
 & &\text{Tameness}& \\
 & \text{Nullness} & & \text{Zero entropy} \\
 & &{\rm Ind}_{fip}=\Delta_X&
 \psset{arrows=->,nodesep=3pt}
 \everypsbox{\scriptstyle}
 \ncline[arrowscale=1.5,linewidth=0.05]{->}{2,2}{1,3}
 \ncline[arrowscale=1.5,linewidth=0.05]{->}{1,3}{2,4}
 \ncline[arrowscale=1.5,linewidth=0.05]{->}{2,2}{3,3}
 \ncline[arrowscale=1.5,linewidth=0.05]{->}{3,3}{2,4}
 \end{psmatrix}
$
\end{center}

\bigskip

\noindent So it is interesting to understand the dynamical properties of a minimal
t.d.s. without ${\rm Ind}_{fip}$-pairs.

\medskip

A second motivation comes from the study of the dynamics of nilsystems.
Let $(X,\mathcal{B}, \mu,T)$ be an ergodic system.

In \cite{HK05}, to study the convergence of some non-conventional
ergodic averages in this system, the authors proved that the
characteristic factors for such averages in $L^2(X,\mathcal{B},\mu)$
are $d$-step nilsystems for some integer $d\geq 1$ (see also
\cite{Z}). Then, in the topological setting,  in \cite{HKM} the
authors defined the notion of regionally proximal relation of order
$d$ associated to a t.d.s. $(X,T)$, $\RP^{[d]}$, and showed that if
the system is minimal and distal then $\RP^{[d]}$ is an equivalence
relation and $(X/\RP^{[d]},T)$ is the maximal $d$-step nilfactor of
the system. In a recent preprint \cite{SY}, this result was
generalized to arbitrarily minimal t.d.s. When studying minimal
distal systems carefully one finds that if $(x,y)\in \RP^{[d]}$ for
some integer $d$ then each ${\A}=(A_{1},A_{2})$, with $A_{1}$ and
$A_{2}$ neighborhoods of $x$ and $y$ respectively, has a finite
IP-independence set of length $n(d)$ such that $\lim_{d\to
\infty}n(d)=\infty$. This means that if
$(x,y)\in\RP^{[\infty]}=\cap_{d\ge 1} \RP^{[d]}$, then $(x,y)\in
{\rm Ind}_{fip}(X,T)$. This is the main reason leading us to define
$\infty$-step nilsystems and to study properties of minimal t.d.s.
without ${\rm Ind}_{fip}$-pairs.

\medskip

The third motivation comes from the theory of \emph{local
complexity} in topological dynamics. In \cite{BHM} the authors
introduced the notion of topological complexity for a t.d.s. using
open covers, and showed that a t.d.s. is equicontinuous if and only
if each nontrivial open cover has a bounded complexity. For a
further development, see \cite{HY1}. An interesting, but more
difficult question, is to understand t.d.s. with polynomial
complexity or extensions of such systems. It appears (this is proved
in Section \ref{section-complexity}) that inverse limits of
nilsystems are special systems with polynomial complexity.

\subsection{Main results of the paper}

In this paper, we study $\infty$-step nilsystems, which are t.d.s.
with trivial $\RP^{[\infty]}$, i.e.
$\RP^{[\infty]}=\cap_{d=1}^\infty\RP^{[d]}=\Delta$, the diagonal.
First, we prove that a minimal system is an $\infty$-step nilsystem
if and only if it is an inverse limit of minimal nilsystems. Then we
study the relation of $\infty$-step nilsystems and independence
pairs. It is proved that any minimal system without nontrivial ${\rm
Ind}_{fip}$-pairs is an almost one-to-one extension of its maximal
$\infty$-step nilfactor. Moreover, a minimal distal system is an
$\infty$-step nilsystem if and only if it has no nontrivial ${\rm
Ind}_{fip}$-pairs. We observe that there are plenty of minimal
distal systems which are not $\infty$-step nilsystems, though it is
not easy to construct explicit examples.

In addition, we show for any minimal system without nontrivial ${\rm
Ind}_{fip}$-pairs that each invariant ergodic measure is measure
theoretical isomorphic to the Haar measure on some $\infty$-step
nilsystem. We conjecture that such class of systems are uniquely
ergodic.

Finally, we prove the topological complexity of an $\infty$-step
nilsystem is polynomial for each non-trivial open cover.

\subsection{Organization of the paper}

We organize the paper as follows. In Section \ref{section-pre} we
introduce basic notions and facts we will meet in this article. Then
we define $\infty$-step nilsystems in Section
\ref{section-nilsystem}. In Section \ref{section-ind} we study the
relationship between $\infty$-step nilsystems and independence
pairs; and in Section \ref{section-example}, we give some examples.
In section \ref{uniqueergo} we study the conjecture concerning
unique ergodicity, and state some further ideas and questions.
Finally, in Section \ref{section-complexity}, we discuss the
complexity of an $\infty$-step nilsystem. Moreover, we give proofs
of some results stated in Section \ref{section-nilsystem} in the
Appendix.

\bigskip

\noindent {\bf Acknowledgements:} We thank W. Huang, H.F. Li and B.
Kra for useful comments and suggestions.


\section{Preliminaries}\label{section-pre}

\subsection{Topological dynamical systems}

A {\em transformation} of a compact metric space X is a
homeomorphism of X to itself. A {\em topological dynamical system}
(t.d.s.) or just a {\em system}, is a pair $(X,T)$, where $X$ is a
compact metric space and $T : X \rightarrow  X$ is a transformation.
We use $\rho (\cdot, \cdot)$ to denote the metric in $X$. In the
sequel, and if there is no confusion, in any t.d.s. we will always
use $T$ to indicate the transformation.

We will also make use of a more general
definition of a system. That is, instead
of just considering a single transformation $T$, we will consider commuting
homeomorphisms $T_1,\ldots , T_k$ of $X$. We recall some basic
definitions and properties of systems in the classical setting of
one transformation. Extensions to the general case are
straightforward.

\medskip

A system $(X, T)$ is {\em transitive} if there exists
$x\in X$ whose orbit $\O(x,T)=\{T^nx: n\in \Z\}$ is dense in $X$ and
such point is called a {\em transitive point}. The system is {\em
minimal} if the orbit of any point is dense in $X$. This property is
equivalent to saying that X and the empty set are the unique closed
invariant subsets of $X$.

\medskip

Let $(X,T)$ be a system and $\mathcal{M}(X)$ be the set of Borel
probability measures in $X$. A measure $\mu\in\mathcal{M}(X)$ is
{\em $T$-invariant} if for any Borel set $B$ of $X$,
$\mu(T^{-1}B)=\mu(B)$. Denote by $\mathcal{M}(X,T)$ the set of
invariant probability measures. A measure $\mu\in\mathcal{M}(X,T)$
is {\em ergodic} if for any Borel set $B$ of $X$ satisfying
$\mu(T^{-1}B\triangle B)=0$ we have $\mu(B)=0$ or $\mu(B)=1$. Denote
by $\mathcal{M}^e(X,T)$ the set of ergodic measures. The system
$(X,T)$ is  {\em uniquely ergodic} if $\mathcal{M}(X,T)$ consists of
only one element.

\medskip

A {\em homomorphism} between the t.d.s. $(X,T)$ and $(Y,T)$ is a
continuous onto map $\pi: X\rightarrow Y$ which intertwines the
actions; one says that $(Y,T)$ is a {\it factor} of $(X,T)$ and that
$(X,T)$ is an {\em extension} of $(Y,T)$. One also refers to $\pi$
as a {\em factor map} or an {\em extension} and one uses the
notation $\pi: (X,T) \rightarrow (Y,T)$. The systems are said to be
{\em conjugate} if $\pi$ is a bijection. An extension $\pi$ is
determined by the corresponding closed invariant equivalence
relation $R_{\pi} = \{ (x_{1},x_2): \pi (x_1)= \pi (x_2) \} =(\pi
\times \pi )^{-1} \Delta_Y \subset  X \times X$, where $\Delta_Y$ is
the diagonal on $Y$. An extension $\pi: (X,T) \rightarrow (Y,T)$ is
{\em almost one-to-one} if the $G_\delta$ set $X_0=\{x \in X:
\pi^{-1}(\pi(x)) = \{x\}\}$ is dense.

\subsection{Distality and Proximality}

Let $(X,T)$ be a t.d.s. A pair $(x,y)\in X\times X$ is a {\it proximal}
pair if
\begin{equation*}
\inf_{n\in \Z} \rho (T^nx,T^ny)=0
\end{equation*}
and is a {\em distal} pair if it is not proximal. Denote by
$P(X,T)$ or $P_X$ the set of proximal pairs of $(X,T)$. The t.d.s.
$(X,T)$ is {\em distal} if $(x,y)$ is a distal pair whenever
$x,y\in X$ are distinct.

\medskip

An extension $\pi : (X,T) \rightarrow (Y,S)$ is  {\em
proximal} if $R_{\pi} \subset P(X,T)$ and is {\em distal} if
$R_\pi\cap P(X,T)=\Delta_X$. Observe that when $Y$ is trivial (reduced to one point) the map
$\pi$ is distal if and only if $(X,T)$ is distal.

\subsection{Independence}

The notion of \emph{independence} was firstly introduced and studied in
\cite[Definition 2.1]{KL}. It corresponds to a modification of the notion of
\emph{interpolating} studied in \cite{GW, HY} and was considerably discussed in
\cite{HLY, HLY1}.

\begin{de}
Let $(X, T)$ be a t.d.s. Given a tuple $\A=(A_1,\ldots,A_k)$ of
subsets of $X$ we say that a subset $F\subset\Z_{+}$ is an {\em
independence set} for $\A$ if for any nonempty finite subset
$J\subset F$ and any $s=(s(j): j\in J ) \in\{1,\ldots,k\}^J$ we have
$$\bigcap\limits_{j\in J}T^{-j}A_{s(j)}\neq\emptyset \ .$$

We shall denote the collection of all independence sets for $\A$ by
$\Ind(A_1,\ldots,A_k)$ or $\Ind\A$.

\end{de}

A finite subset $F$ of $\Z_{+}$ is called a {\em finite IP-set}  if
there exists a finite subset $\{p_1,p_2,\ldots,p_m\}$ of $\N$ such
that
$$F=FS(\{p_i\}_{i=1}^m)=\{p_{i_1}+\cdots+p_{i_k}:1\leq i_1<\cdots<i_k\leq m\}.$$
Now we define ${\rm Ind}_{fip}$-pairs.

\begin{de}
Let $(X, T)$ be a t.d.s. A pair $(x_1,x_2) \in X \times X$ is called
an {\em ${\rm Ind}_{fip}$-pair} if for any neighborhoods $U_1$,
$U_2$ of $x_1$ and $x_2$ respectively, $\Ind(U_1,U_2)$ contains
arbitrarily long finite IP-sets. Denote by ${\rm Ind}_{fip}(X,T)$
the set of all ${\rm Ind}_{fip}$-pairs of $(X,T)$.
\end{de}

\subsection{Parallelepipeds}

Let $X$ be a set, $d\ge 1$ be an integer, and write $[d] = \{1,
2,\ldots , d\}$. We view $\{0, 1\}^d$ in one of two ways, either as
a sequence $\ep=\ep_1\ldots \ep_d :=(\ep_1,\ldots,\ep_d)$ of $0'$s
and $1'$s; or as a subset of $[d]$. A subset $\ep$ corresponds to
the sequence $(\ep_1,\ldots, \ep_d)\in \{0,1\}^d$ such that $i\in
\ep$ if and only if $\ep_i = 1$ for $i\in [d]$. For example, ${\bf
0}=(0,0,\ldots,0)\in \{0,1\}^d$ is the same as $\emptyset \subset
[d]$.

If ${\bf n} = (n_1,\ldots, n_d)\in \Z^d$ and $\ep\in \{0,1\}^d$, we
define
$${\bf n}\cdot \ep = \sum_{i=1}^d n_i\ep_i .$$
If we consider $\ep$ as $\ep\subset [d]$, then ${\bf n}\cdot \ep  =
\sum_{i\in \ep} n_i .$

\medskip

We denote $X^{2^d}$ by $X^{[d]}$. A point ${\bf x}\in X^{[d]}$ can
be written in one of two equivalent ways, depending on the context:
$${\bf x} = (x_\ep :\ep\in \{0,1\}^d )= (x_\ep : \ep\subset [d]). $$
Hence $x_\emptyset =x_{\bf 0}$ is the first coordinate of ${\bf x}$.
As examples, points in $X^{[2]}$ are written
$$(x_{00},x_{10},x_{01},x_{11})=(x_{\emptyset}, x_{\{1\}},x_{\{2\}},x_{\{1,2\}}),$$
and points in $X^{[3]}$ look like
\begin{eqnarray*}
 & & (x_{000},x_{100},x_{010},x_{110},x_{001},x_{101},x_{011},x_{111}) \\
     &=&(x_{\emptyset}, x_{\{1\}},x_{\{2\}},x_{\{1,2\}},x_{\{3\}}, x_{\{1,3\}},x_{\{2,3\}},x_{\{1,2,3\}}).
\end{eqnarray*}

\medskip

For $x \in X$ we write $x^{[d]} = (x, x,\ldots , x)\in  X^{[d]}$.
The diagonal of $X^{[d]}$ is $\D^{[d]} = \{x^{[d]}: x\in X\}$.
Usually, when $d=1$, we denote the diagonal by $\D_X$ or $\D$ instead of
$\D^{[1]}$.

A point ${\bf x} \in X^{[d]}$ can be decomposed as ${\bf x} = ({\bf
x'},{\bf  x''})$ with ${\bf x}', {\bf x}''\in X^{[d-1]}$, where
${\bf x}' = (x_{\ep0} : \ep\in \{0,1\}^{d-1})$ and ${\bf x}''=
(x_{\ep1} : \ep\in \{0,1\}^{d-1})$. We can also isolate the first
coordinate, writing $X^{[d]}_* = X^{2^d-1}$ and then writing a point
${\bf x}\in X^{[d]}$ as ${\bf x} = (x_\emptyset, {\bf x}_*)$, where
${\bf x}_*= (x_\ep : \ep\neq \emptyset) \in X^{[d]}_*$.

\medskip

Identifying $\{0, 1\}^d$ with the set of vertices of the Euclidean
unit cube, a Euclidean isometry of the unit cube permutes the
vertices of the cube and thus the coordinates of a point ${\bf x} \in
X^{[d]}$. These permutations are the {\em Euclidean permutations} of
$X^{[d]}$.

\subsection{Dynamical parallelepipeds}
We follow definitions from \cite{HKM}.

Let $(X, T)$ be a t.d.s. and $d\ge 1$ be
an integer.

We define the set of (dynamical) {\em parallelepipeds of
dimension $d$}, $\Q^{[d]}(X)$ (or just $\Q^{[d]}$), as the closure in $X^{[d]}$
of elements of the form
$$(T^{{\bf n}\cdot \ep}x=T^{n_1\ep_1+\ldots
+ n_d\ep_d}x: \ep= (\ep_1,\ldots,\ep_d)\in\{0,1\}^d) ,$$
where ${\bf n} = (n_1,\ldots , n_d)\in \Z^d$ and $ x\in X$.
It is important to note that $\Q^{[d]}$ is invariant under the
Euclidean permutations of $X^{[d]}$.

As examples, $\Q^{[2]}$ is the closure in $X^{[2]}=X^4$ of the set
$$\{(x, T^mx, T^nx, T^{n+m}x) : x \in X, m, n\in\Z\}$$ and $\Q^{[3]}$
is the closure in $X^{[3]}=X^8$ of the set $$\{(x, T^mx, T^nx,
T^{m+n}x, T^px, T^{m+p}x, T^{n+p}x, T^{m+n+p}x) : x\in X, m, n, p\in
\Z\}.$$

Let $\phi: (X,T) \rightarrow (Y,T)$ be a factor map.
Define$\phi^{[d]}: X^{[d]}\rightarrow Y^{[d]}$ by $(\phi^{[d]}{\bf
x})_\ep=\phi x_\ep$ for every ${\bf x}\in X^{[d]}$ and every
$\ep\subset [d]$. The {\em diagonal transformation} of $X^{[d]}$ is
the map $T^{[d]}$. We define {\em face transformations} inductively
as follows: Let $T^{[0]}=T$, $T^{[1]}_1=\id \times T$. If
$\{T^{[d-1]}_j\}_{j=1}^{d-1}$ is already defined, then set
$$T^{[d]}_j=T^{[d-1]}_j\times T^{[d-1]}_j, \ j\in \{1,2,\ldots, d-1\},$$
$$T^{[d]}_d=\id ^{[d-1]}\times T^{[d-1]}.$$

It is easy to see that for $j\in [d]$, the face transformation
$T^{[d]}_j : X^{[d]}\rightarrow X^{[d]}$ can be defined, for
every ${\bf x} \in X^{[d]}$ and $\ep\subset [d]$, by

$$T^{[d]}_j{\bf x}=
\left\{
  \begin{array}{ll}
    (T^{[d]}_j{\bf x})_\ep=Tx_\ep, & \hbox{$j\in \ep$;} \\
    (T^{[d]}_j{\bf x})_\ep=x_\ep, & \hbox{$j\not \in \ep$.}
  \end{array}
\right.$$

\medskip

The {\em face group} of dimension $d$ is the group $\F^{[d]}(X)$ of
transformations of $X^{[d]}$ spanned by the face transformations.
The {\em parallelepiped group} of dimension $d$ is the group
$\G^{[d]}(X)$ spanned by the diagonal transformation and the face
transformations. We often write $\F^{[d]}$ and $\G^{[d]}$ instead of
$\F^{[d]}(X)$ and $\G^{[d]}(X)$, respectively. For $\G^{[d]}$ and
$\F^{[d]}$, we use similar notations to that used for $X^{[d]}$:
Namely, an element of either of these groups is written as $S =
(S_\ep : \ep\in\{0,1\}^d)$. In particular, $\F^{[d]} =\{S\in
\G^{[d]}: S_\emptyset ={\rm id}\}$.

\medskip

For convenience, we denote the orbit closure of ${\bf x}\in X^{[d]}$
under $\F^{[d]}$ by $\overline{\F^{[d]}}({\bf x})$, instead of
$\overline{\O({\bf x}, \F^{[d]})}$.

It is easy to verify that $\Q^{[d]}$ is the closure in $X^{[d]}$ of
$$\{Sx^{[d]} : S\in \F^{[d]}, x\in X\}.$$ If $(X,T)$ is a
transitive system and $x$ a transitive point, then $\Q^{[d]}$ is the
closed orbit of $x^{[d]}$ under the group $\G^{[d]}$. Moreover, it
is easy to get that
$$\Q^{[d]}=\ov{\{(T^{{\bf n}\cdot\ep}x)_{\ep\in\{0,1\}^d}
:{\bf n}\in\N^d, x\in X\}} \ .$$

\subsection{Nilmanifolds and nilsystems}

Let $G$ be a group. For $g, h\in G$ and $A,B \subset G$, we write $[g, h] =
ghg^{-1}h^{-1}$ for the commutator of $g$ and $h$ and
$[A,B]$ for the subgroup spanned by $\{[a, b] : a \in A, b\in B\}$.
The commutator subgroups $G_j$, $j\ge 1$, are defined inductively by
setting $G_1 = G$ and $G_{j+1} = [G_j ,G]$. Let $d \ge 1$ be an
integer. We say that $G$ is {\em $d$-step nilpotent} if $G_{d+1}$ is
the trivial subgroup.

\medskip

Let $G$ be a $d$-step nilpotent Lie group and $\Gamma$ be a discrete
cocompact subgroup of $G$. The compact manifold $X = G/\Gamma$ is
called a {\em $d$-step nilmanifold}. The group $G$ acts on $X$ by
left translations and we write this action as $(g, x)\mapsto gx$.
The Haar measure $\mu$ of $X$ is the unique probability measure on
$X$ invariant under this action. Let $\tau\in G$ and $T$ be the
transformation $x\mapsto \tau x$ of $X$. Then $(X, \mu, T)$ is
called a {\em $d$-step nilsystem}. In the topological setting we omit the measure
and just say that $(X,T)$ is a $d$-step nilsystem.

\medskip

We will need to use inverse limits of nilsystems, so we recall the
definition of a sequential inverse limit of systems. If
$(X_i,T_i)_{i\in \N}$ are systems with $diam(X_i)\le 1$ and
$\pi_i: X_{i+1}\rightarrow X_i$ are factor maps, the {\em inverse
limit} of the systems is defined to be the compact subset of
$\prod_{i\in \N}X_i$ given by $\{ (x_i)_{i\in \N }: \pi_i(x_{i+1}) =
x_i\}$, and we denote it by
$\lim\limits_{\longleftarrow}(X_i,T_i)_{i\in\N}$. It is a compact
metric space endowed with the distance $\rho((x_{i})_{i\in\N}, (y_{i})_{i\in
\N}) = \sum_{i\in \N} 1/2^i \rho_i(x_i, y_i )$, where $\rho_{i}$ is the metric in
$X_{i}$. We note that the
maps $T_i$ induce naturally a transformation $T$ on the inverse
limit.

\medskip

The following structure theorem characterizes inverse limits of
nilsystems using dynamical parallelepipeds.

\begin{thm}[Host-Kra-Maass]\cite[Theorem 1.2.]{HKM}\label{HKM}
Assume that $(X, T)$ is a transitive topological dynamical system
and let $d \ge 2$ be an integer. The following properties are
equivalent:
\begin{enumerate}
  \item If ${\bf x}, {\bf y} \in \Q^{[d]}$ have $2^d-1$ coordinates in common, then ${\bf x} = {\bf y}$.
  \item If $x, y \in X$ are such that $(x, y,\ldots , y) \in  \Q^{[d]}$,
then $x = y$.
  \item $X$ is an inverse limit of $(d-1)$-step minimal
nilsystems.
\end{enumerate}
\end{thm}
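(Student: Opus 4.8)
The statement is the Host–Kra–Maass structure theorem, so the plan is to establish the cycle of implications $(3)\Rightarrow(1)\Rightarrow(2)\Rightarrow(3)$.

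The implication $(3)\Rightarrow(1)$ should be proved in two stages. First I would verify it for a single minimal $(d-1)$-step nilsystem $X=G/\Gamma$. Here the point is that $\Q^{[d]}(X)$ can be described algebraically: it is the orbit of $\D^{[d]}$ under $\G^{[d]}$, and by the Host–Kra machinery $\Q^{[d]}(G/\Gamma)$ coincides with $\G^{[d]}_{HK}/\Gamma^{[d]}$, where $\G^{[d]}_{HK}$ is the Host–Kra cube group built from $G$. Since $G$ is $(d-1)$-step nilpotent, this cube group has the property that a parallelepiped is determined by any $2^d-1$ of its coordinates — this is exactly where the nilpotency degree enters, because the "missing vertex" is reconstructed from the others using that the relevant iterated commutator relation is trivial. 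Then I would pass to inverse limits: if $X=\invlim X_i$ with each $X_i$ a minimal $(d-1)$-step nilsystem, and ${\bf x},{\bf y}\in\Q^{[d]}(X)$ agree in $2^d-1$ coordinates, their projections to each $X_i$ agree in $2^d-1$ coordinates of $\Q^{[d]}(X_i)$ (using that $\Q^{[d]}$ is preserved by factor maps, i.e. $\phi^{[d]}\Q^{[d]}(X)\subseteq\Q^{[d]}(X_i)$), hence are equal for every $i$, hence ${\bf x}={\bf y}$.

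The implication $(1)\Rightarrow(2)$ is immediate: if $(x,y,\ldots,y)\in\Q^{[d]}$, this point shares its last $2^d-1$ coordinates with $y^{[d]}\in\D^{[d]}\subseteq\Q^{[d]}$, so by $(1)$ they coincide and $x=y$.

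The implication $(2)\Rightarrow(3)$ is the substantial one and the main obstacle. Under hypothesis $(2)$ — which, I would first note, forces $(X,T)$ to be distal, since a proximal pair $(x,y)$ with $x\neq y$ produces $(x,y,\ldots,y)\in\Q^{[d]}$ — one wants to build the tower of nilfactors. The strategy is inductive on $d$. For $d=2$, condition $(2)$ should be shown to imply that $X$ is equicontinuous, hence by the classical Halmos–von Neumann theory an inverse limit of (one-step) nilsystems, i.e. rotations on compact abelian groups; the key is that $(2)$ for $d=2$ says $(x,y,y)\in\Q^{[2]}$ implies $x=y$, which translates into a rigidity of the enveloping semigroup forcing equicontinuity. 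For the inductive step one considers the relation $\RP^{[d-1]}$ (regionally proximal of order $d-1$), shows that on a system satisfying $(2)$ it is a closed invariant equivalence relation whose quotient $Z=X/\RP^{[d-1]}$ satisfies the analogue of $(2)$ with $d$ replaced by $d-1$, hence by induction $Z$ is an inverse limit of $(d-2)$-step nilsystems; then the fibre structure of $X\to Z$ must be a group extension controlled by condition $(2)$, and one invokes the Host–Kra–Ziegler structural results to identify $X$ itself as an inverse limit of $(d-1)$-step nilsystems. The delicate points are: (i) proving $\RP^{[d-1]}$ is an equivalence relation in this generality, and (ii) analyzing the extension $X\to X/\RP^{[d-1]}$ to see it is a pro-nilpotent (Weyl-type) group extension rather than merely an isometric one. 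Since the excerpt notes that proofs of the results in Section~\ref{section-nilsystem} are deferred to the Appendix, I would expect the heavy lifting of step $(2)\Rightarrow(3)$ to be carried out there, building on \cite{HKM} and \cite{HKM}'s companion measure-theoretic input from \cite{HK05}.
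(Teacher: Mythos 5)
First, a point of orientation: the paper does not prove this statement at all — it is quoted verbatim from \cite{HKM} as background, so there is no internal proof to compare against, and your expectation that ``the heavy lifting of $(2)\Rightarrow(3)$'' is carried out in the Appendix is mistaken (the Appendix proves Theorem \ref{noteasy} about the stabilization $\RP^{[d]}=\RP^{[d+1]}$, not this theorem). Judged on its own terms, your outline of $(3)\Rightarrow(1)$ (cube-group description of $\Q^{[d]}$ for a single minimal $(d-1)$-step nilsystem, where $(d-1)$-step nilpotency lets one reconstruct the missing vertex, followed by the inverse-limit reduction using $\phi^{[d]}\Q^{[d]}(X)\subseteq\Q^{[d]}(X_i)$ and the fact that coordinates separate points) and the trivial $(1)\Rightarrow(2)$ are sound in spirit and match the standard argument.

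The genuine gap is in $(2)\Rightarrow(3)$, which is the entire substance of the theorem. Your inductive scheme via $\RP^{[d-1]}$ stops exactly where the difficulty begins: after asserting that $Z=X/\RP^{[d-1]}$ inherits property $(2)$ with $d-1$ (itself unproved), you say the fibre structure of $X\to Z$ ``must be a group extension controlled by condition $(2)$'' and then ``invoke the Host--Kra--Ziegler structural results to identify $X$ itself as an inverse limit of $(d-1)$-step nilsystems.'' That invocation is essentially the conclusion you are trying to prove, phrased as a citation; nothing in your sketch explains how a purely topological hypothesis about $\Q^{[d]}$ yields the measure-theoretic input those results need, nor how a measurable identification would be upgraded to a topological conjugacy. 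The actual route in \cite{HKM} is different and this is where the work lies: one first shows property $(2)$ forces distality (hence minimality and existence of invariant measures), then relates the topological cubes $\Q^{[d]}$ to the Host--Kra cube measures $\mu^{[d]}$ of \cite{HK05}, deduces that $(X,\mu,T)$ coincides measure-theoretically with its maximal $(d-1)$-step nilfactor $Z_{d-1}$, and finally — using unique ergodicity of nilsystems and a continuity/dense-$G_\delta$ argument — shows that this measurable isomorphism is in fact a topological isomorphism onto an inverse limit of minimal $(d-1)$-step nilsystems. None of steps two through four appears in your proposal, and also your auxiliary claims (that $(2)$ for $d=2$ forces equicontinuity via enveloping-semigroup rigidity, that $\RP^{[d-1]}$ is an equivalence relation in the needed generality — cf.\ \cite{SY}, and that the quotient satisfies the lower-order analogue of $(2)$) are asserted rather than argued. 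As it stands the proposal is a plan for the easy directions plus a placeholder for the hard one.
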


A transitive system satisfying one of the equivalent properties
above is called  a {\em system of order $(d-1)$}.

\section{$\infty$-step nilsystems}\label{section-nilsystem}

\subsection{Regionally proximal relation
of order $d$}
First we recall a fundamental relation introduced in \cite{HKM} allowing to
characterize maximal nilfactors in \cite{HKM} (for minimal distal systems) and
in \cite{SY} (for general minimal systems).

\begin{de}
Let $(X, T)$ be a system and let $d\in \N$. The points $x, y \in X$ are
said to be {\em regionally proximal of order $d$} if for any $\d  >
0$, there exist $x', y'\in X$ and a vector ${\bf n} = (n_1,\ldots ,
n_d)\in\Z^d$ such that $\rho (x, x') < \d, \rho (y, y') <\d$, and $$
\rho (T^{{\bf n}\cdot \ep}x', T^{{\bf n}\cdot \ep}y') < \d\
\text{for any nonempty $\ep\subset [d]$}.$$ In other words, there
exists $S\in \F^{[d]}$ such that $\rho (S_\ep x', S_\ep y') <\d$ for
every $\ep\neq \emptyset$. The set of regionally proximal pairs of
order $d$ is denoted by $\RP^{[d]}$ (or by $\RP^{[d]}(X,T)$ in case of
ambiguity), and is called {\em the regionally proximal relation of
order $d$}.
\end{de}

It is easy to see that $\RP^{[d]}$ is a closed and invariant
relation. Observe that
\begin{equation*}
    {\bf P}(X,T)\subseteq  \ldots \subseteq \RP^{[d+1]}\subseteq
    \RP^{[d]}\subseteq \ldots \RP^{[2]}\subseteq \RP^{[1]}=\RP(X,T).
\end{equation*}
\medskip

The following theorems proved in \cite{HKM} (for minimal distal systems) and
in \cite{SY} (for general minimal systems) tell us conditions under which
$(x,y)$ belongs to $\RP^{[d]}$ and the relation between $\RP^{[d]}$ and
$d$-step nilsystems.

\begin{thm}\label{thm-1}
Let $(X, T)$ be a minimal system and let $d\in \N$. Then
\begin{enumerate}
\item $(x,y)\in \RP^{[d]}$ if and only if $(x,y,\ldots,y)\in \Q^{[d+1]}$
if and only if $(x,y,\ldots,y) \in
\overline{\F^{[d+1]}}(x^{[d+1]})$.

\item $\RP^{[d]}$ is an equivalence relation.

\item $(X,T)$ is a system of order $d$ if and only if $\RP^{[d]}=\Delta_X$.
\end{enumerate}
\end{thm}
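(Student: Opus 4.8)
The plan is to deduce (3) from (1) together with Theorem \ref{HKM}, to prove (1) by a cycle of three implications, and to isolate (2) — whose only substantive content is transitivity — as the deepest point. For (3): applying Theorem \ref{HKM} with $d+1$ in place of $d$, $(X,T)$ is a system of order $d$ if and only if $(x,y,\ldots,y)\in\Q^{[d+1]}$ forces $x=y$; by part (1) this condition reads ``$(x,y)\in\RP^{[d]}\Rightarrow x=y$'', and since $\Delta_X\subseteq\RP^{[d]}$ always, it is equivalent to $\RP^{[d]}=\Delta_X$. So (3) is immediate once (1) is in hand.

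\emph{Proof of (1).} Write (a), (b), (c) for the three conditions in the order stated; I would prove (c)$\Rightarrow$(b)$\Rightarrow$(a)$\Rightarrow$(c). For (c)$\Rightarrow$(b): $x^{[d+1]}\in\Q^{[d+1]}$ (take $\mathbf{n}=\mathbf{0}$) and $\Q^{[d+1]}$ is closed and $\F^{[d+1]}$-invariant, so $\overline{\F^{[d+1]}}(x^{[d+1]})\subseteq\Q^{[d+1]}$. For (b)$\Rightarrow$(a): write $(x,y,\ldots,y)$ as a limit of generators $(T^{\mathbf{m}^{(k)}\cdot\eta}z_k)_{\eta\subseteq[d+1]}$ and split $\mathbf{m}^{(k)}=(\mathbf{n}^{(k)},\ell_k)\in\Z^d\times\Z$. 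Then $z_k\to x$, and for every $\emptyset\neq\epsilon\subseteq[d]$ both $T^{\mathbf{n}^{(k)}\cdot\epsilon}z_k$ (the $\epsilon$-coordinate) and $T^{\mathbf{n}^{(k)}\cdot\epsilon}(T^{\ell_k}z_k)$ (the $(\epsilon\cup\{d+1\})$-coordinate) tend to $y$; hence $x'_k:=z_k\to x$, $y'_k:=T^{\ell_k}z_k\to y$, and the element $S^{(k)}=\prod_{i=1}^d(T^{[d]}_i)^{n^{(k)}_i}\in\F^{[d]}$ satisfies $\rho(S^{(k)}_\epsilon x'_k,S^{(k)}_\epsilon y'_k)\to 0$ for all $\emptyset\neq\epsilon\subseteq[d]$, which is exactly what is needed to put $(x,y)$ in $\RP^{[d]}$.

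\emph{The crux: (a)$\Rightarrow$(c).} Given $(x,y)\in\RP^{[d]}$, fix $\delta>0$ and pick $x',y',\mathbf{n}$ as in the definition, with $S=\prod_{i=1}^d(T^{[d]}_i)^{n_i}\in\F^{[d]}$. The points $\mathbf{p}=S(x')^{[d]}$ and $\mathbf{q}=S(y')^{[d]}$ lie in $\Q^{[d]}$, have $p_\emptyset=x'\approx x$, $q_\emptyset=y'\approx y$, and agree off the $\emptyset$-coordinate up to $\delta$. Moreover $(\mathbf{p},(T^{[d]})^\ell\mathbf{p})$ is the image of $(x')^{[d+1]}$ under $\prod_{i=1}^d(T^{[d+1]}_i)^{n_i}\cdot(T^{[d+1]}_{d+1})^\ell\in\F^{[d+1]}$, hence lies in $\Q^{[d+1]}$ for every $\ell$; since $\overline{\{(T^{[d]})^\ell\mathbf{p}:\ell\in\Z\}}=S\cdot\overline{\{(T^\ell x')^{[d]}:\ell\in\Z\}}=\{Sz^{[d]}:z\in X\}$ by minimality of $(X,T)$, and $\mathbf{q}=S(y')^{[d]}$ belongs to this set, we obtain $(\mathbf{p},\mathbf{q})\in\Q^{[d+1]}$. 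Letting $\delta\to 0$ along a subsequence produces a point of $\Q^{[d+1]}$ whose $\emptyset$-coordinate is $x$, whose $\{d+1\}$-coordinate is $y$, and whose remaining coordinates occur in equal pairs. Turning such a witness into the specific point $(x,y,\ldots,y)$ — and, beyond that, placing it in the face-group orbit closure of $x^{[d+1]}$ rather than merely in $\Q^{[d+1]}$ — is where the real work lies: I would do this using the minimality of the cubic system $(\Q^{[d+1]},\G^{[d+1]})$ together with a compactness argument, and, in the absence of distality, the finer structure of the Ellis semigroup of $(X,T)$. When $(X,T)$ is distal this step can instead be carried out inductively along Furstenberg's tower of isometric extensions, using that $X/\RP^{[d-1]}$ is already a system of order $d-1$.

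\emph{Proof of (2) and the main obstacle.} $\RP^{[d]}$ is visibly closed and $T$-invariant, and reflexive and symmetric directly from its definition, so only transitivity remains. Using the characterization in (1), transitivity says: if $(x,y,\ldots,y)\in\Q^{[d+1]}$ and $(y,z,\ldots,z)\in\Q^{[d+1]}$, then $(x,z,\ldots,z)\in\Q^{[d+1]}$ — in effect, that face-group orbit closures can be concatenated through the common point $y$. I expect this to be the single hardest step: for minimal distal systems it can be extracted from the nilfactor structure at levels below $d$ and the group-extension description of $X$, but for general minimal systems it is exactly the main theorem of \cite{SY}, whose proof rests on the enveloping semigroup (the Ellis group, its $\tau$-topology, and the behaviour of $\RP^{[d]}$ under proximal and distal extensions). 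Accordingly I would settle the distal case first by the structure-theoretic route, and then invoke — or reproduce — the Ellis-semigroup argument of \cite{SY} for the general minimal case.
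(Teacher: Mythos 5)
The paper contains no proof of this theorem to compare against: it is quoted from \cite{HKM} (minimal distal case) and \cite{SY} (general minimal case), and the citation is the paper's entire treatment. Measured against that, your proposal is consistent and in places more informative than the paper itself: the deduction of (3) from (1) via Theorem \ref{HKM} applied with $d+1$ is correct, the implications (c)$\Rightarrow$(b)$\Rightarrow$(a) are argued correctly, and your partial construction for (a)$\Rightarrow$(c) — using $S(x')^{[d]}$, $S(y')^{[d]}$ and letting $\delta\to 0$ to produce a point of $\Q^{[d+1]}$ with $\emptyset$-coordinate $x$, $\{d+1\}$-coordinate $y$, and the remaining coordinates matched in pairs — is a sound first step and is essentially how the known arguments begin. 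The caution is about how you describe finishing: upgrading that witness to $(x,y,\ldots,y)\in\overline{\F^{[d+1]}}(x^{[d+1]})$, and proving transitivity in (2), is not a routine ``minimality of $(\Q^{[d+1]},\G^{[d+1]})$ plus compactness'' matter — the minimality of $(\Q^{[d+1]},\G^{[d+1]})$ and of $(\overline{\F^{[d+1]}}(x^{[d+1]}),\F^{[d+1]})$ for a general (non-distal) minimal system are themselves the deep results of \cite{SY}, obtained with enveloping-semigroup machinery, and you rightly end by saying you would invoke or reproduce that work. So your proposal is correct precisely insofar as it, like the paper, ultimately imports the hard content from \cite{HKM} and \cite{SY}; read as a self-contained proof it leaves exactly those two steps (the hard direction of (1), and (2)) unproved, and it is worth stating explicitly that they are cited rather than established.
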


\subsection{$\infty$-step Nilsystems}
The regionally proximal relation of order $d$ allows to construct the maximal $d$-step
nilfactor of a system. That is, any factor of order $d$ (inverse limit of $d$-step minimal nilsystems) factorize through this system.

\begin{thm}\label{thm0}\cite{SY}
Let $\pi: (X,T)\rightarrow (Y,T)$ be a factor map between minimal systems
and let $d\in \N$. Then,
\begin{enumerate}
  \item $\pi\times \pi (\RP^{[d]}(X,T))=\RP^{[d]}(Y,T)$.
  \item $(Y,T)$ is a system of order $d$ if and only if $\RP^{[d]}(X,T)\subset R_\pi$.
\end{enumerate}
In particular, the quotient of $(X,T)$ under $\RP^{[d]}(X,T)$ is the
maximal $d$-step nilfactor of $X$ (i.e. the maximal factor of order
$d$).
\end{thm}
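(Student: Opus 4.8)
The plan is to reduce everything to part (1), and inside part (1) to its one substantial point, the \emph{lifting} inclusion $\RP^{[d]}(Y)\subseteq \pi\times\pi(\RP^{[d]}(X))$. The reverse inclusion $\pi\times\pi(\RP^{[d]}(X))\subseteq\RP^{[d]}(Y)$ is immediate from the definition: given $(x,y)\in\RP^{[d]}(X)$ and $\d>0$, pick $\d'>0$ with $\rho(u,v)<\d'\Rightarrow\rho(\pi u,\pi v)<\d$ by uniform continuity, take $x',y'\in X$ and ${\bf n}\in\Z^{d}$ realizing the defining condition for $\d'$, and push forward by $\pi$ (alternatively, combine Theorem \ref{thm-1}(1) with the obvious inclusion $\pi^{[d+1]}(\Q^{[d+1]}(X))\subseteq\Q^{[d+1]}(Y)$). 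Assuming part (1): for part (2), if $\RP^{[d]}(X)\subseteq R_\pi$ then $\RP^{[d]}(Y)=\pi\times\pi(\RP^{[d]}(X))\subseteq\D_Y$, so $(Y,T)$ has order $d$ by Theorem \ref{thm-1}(3); conversely, if $\RP^{[d]}(Y)=\D_Y$ and $(x,y)\in\RP^{[d]}(X)$, then $(\pi x,\pi y)\in\RP^{[d]}(Y)=\D_Y$, i.e. $(x,y)\in R_\pi$. For the last assertion, $\RP^{[d]}(X)$ is a closed invariant equivalence relation (closed and invariant as noted after its definition, transitive by Theorem \ref{thm-1}(2)), so $Z:=X/\RP^{[d]}(X)$ is a minimal system with factor map $\pi_d$ and $R_{\pi_d}=\RP^{[d]}(X)$; part (1) then yields $\RP^{[d]}(Z)=\pi_d\times\pi_d(\RP^{[d]}(X))=\D_Z$, so $Z$ has order $d$, while part (2) shows every order-$d$ factor of $X$ factors through $Z$; hence $Z$ is the maximal $d$-step nilfactor.

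For the lifting, fix $(a,b)\in\RP^{[d]}(Y)$. By Theorem \ref{thm-1}(1), $(a,b,\ldots,b)\in\overline{\F^{[d+1]}}(a^{[d+1]})\subseteq Y^{[d+1]}$. Choose $x\in\pi^{-1}(a)$. Since $\phi:=\pi^{[d+1]}\colon X^{[d+1]}\ra Y^{[d+1]}$ intertwines the $\F^{[d+1]}$-actions, it maps $\overline{\F^{[d+1]}}(x^{[d+1]})$ onto $\overline{\F^{[d+1]}}(a^{[d+1]})$, so there is ${\bf z}\in\overline{\F^{[d+1]}}(x^{[d+1]})$ with $\phi({\bf z})=(a,b,\ldots,b)$; here $z_\emptyset=x$ (every element of $\F^{[d+1]}$, hence every limit of such, is the identity in the $\emptyset$-coordinate) and $\pi(z_\ep)=b$ for all nonempty $\ep\subseteq[d+1]$. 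It therefore suffices to produce, inside $\Q^{[d+1]}(X)$, a point of the special form $(x',y,\ldots,y)$ still satisfying $\phi(x',y,\ldots,y)=(a,b,\ldots,b)$: then Theorem \ref{thm-1}(1) gives $(x',y)\in\RP^{[d]}(X)$ with $\pi x'=a$ and $\pi y=b$, as wanted. In other words, the whole problem reduces to a \emph{fibre-collapsing} statement --- the $2^{d+1}-1$ coordinates $z_\ep$ ($\ep\neq\emptyset$), which a priori only share the common image $b$, must be driven to a single point $y$ after suitably translating within, and passing to limits inside, the minimal system $\overline{\F^{[d+1]}}(x^{[d+1]})$.

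This fibre-collapsing step is the main obstacle, and the only place where genuine machinery is needed. The route I would follow is: (i) first establish the auxiliary lemma that for a minimal system $(X,T)$ the diagonal point $x^{[d]}$ is a minimal (almost periodic) point of $(X^{[d]},\F^{[d]})$ --- equivalently $(\overline{\F^{[d]}}(x^{[d]}),\F^{[d]})$ is minimal --- proved by induction on $d$ via the splitting $X^{[d]}=X^{[d-1]}\times X^{[d-1]}$ and the recursive description of $\F^{[d]}$ in terms of $\F^{[d-1]}$ and the last face transformation; (ii) apply it over both $X$ and $Y$, so that $\overline{\F^{[d+1]}}(a^{[d+1]})$ is a minimal set and $(a,b,\ldots,b)$ is a minimal point of $(Y^{[d+1]},\F^{[d+1]})$; and (iii) work in the enveloping semigroup $E(X^{[d+1]},\F^{[d+1]})$ and its canonical image over $Y$, using minimal left ideals and idempotents --- which lift along the induced semigroup epimorphism --- to control the fibre over $(a,b,\ldots,b)$ and force its non-$\emptyset$ coordinates to a common limit. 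Step (iii) is a genuine generalization of the classical proof that $\pi\times\pi(\RP(X))=\RP(Y)$ (the case $d=1$), and it is there that I expect essentially all the difficulty to reside. I would also remark that when $(X,T)$ is distal the system $(X^{[d+1]},\F^{[d+1]})$ is distal, proximality is trivial, and step (iii) simplifies substantially --- which is why the distal case of the theorem is much easier.
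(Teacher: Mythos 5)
Your reductions are all sound: the containment $\pi\times\pi(\RP^{[d]}(X,T))\subseteq\RP^{[d]}(Y,T)$ from uniform continuity, the derivation of (2) from (1) together with Theorem \ref{thm-1}(3), and the maximality statement from (1), (2) and Theorem \ref{thm-1}(2) are exactly as they should be, as is the first step of the lifting (a preimage ${\bf z}\in\overline{\F^{[d+1]}}(x^{[d+1]})$ of $(a,b,\ldots,b)$ with $z_\emptyset=x$). But be aware that this paper contains no proof of Theorem \ref{thm0} at all --- it is quoted from \cite{SY} --- so the standard you have to meet is the full argument of that reference, and what you have written stops precisely where that argument begins. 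Your step (iii), the ``fibre-collapsing'' that replaces the $2^{d+1}-1$ coordinates lying over $b$ by a single point $y$, is the entire content of the lifting theorem, and you give no mechanism for it beyond naming the tools (minimal left ideals, lifted idempotents); ``force its non-$\emptyset$ coordinates to a common limit'' is a restatement of the goal, not an argument. In the case $d=1$ the classical proof works because $\RP^{[1]}$ can be controlled by a single proximal/almost-periodic pair; for $d\geq 2$ one must control a whole cube of coordinates simultaneously, and the passage from ``all non-first coordinates map to $b$'' to ``all non-first coordinates can be made equal'' is exactly the delicate part that \cite{SY} spends most of its length on.

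There is a second, more concrete gap in your step (i). The claim that $x^{[d]}$ is an almost periodic point of $(X^{[d]},\F^{[d]})$, equivalently that $(\overline{\F^{[d]}}(x^{[d]}),\F^{[d]})$ is minimal, is \emph{not} obtainable for a general minimal $(X,T)$ by the simple induction on $d$ through the splitting $X^{[d]}=X^{[d-1]}\times X^{[d-1]}$: that inductive scheme runs smoothly only in the distal case, where $X^{[d]}$ is distal under $\F^{[d]}$ and hence every point is automatically minimal (this is how \cite{HKM} proceeds). For non-distal minimal systems, already for $d=2$ one is asking whether the orbit closure of $(x,x,x)$ under the commuting maps $T\times\id\times T$ and $\id\times T\times T$ is minimal, which is far from formal; this minimality statement is itself one of the principal theorems of \cite{SY}, established there with heavy enveloping-semigroup machinery, not a lemma one can cite as routine. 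So your proposal, while correctly organized, in effect reduces Theorem \ref{thm0} to its two hardest ingredients --- the minimality of $(\overline{\F^{[d]}}(x^{[d]}),\F^{[d]})$ and the fibre-collapsing over $(a,b,\ldots,b)$ --- and supplies neither; as it stands it is a correct skeleton with the substance missing.
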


It follows that for any minimal system $(X,T)$,
$$\RP^{[\infty]}=\bigcap\limits_{d\ge 1} \RP^{[d]}$$
is a closed invariant equivalence relation (we write $\RP^{[\infty]}(X,T)$ in case of ambiguity). Now we formulate the
definition of $\infty$-step nilsystems or systems of order $\infty$.

\begin{de}
A minimal system $(X, T)$ is an {\em $\infty$-step
nilsystem} or {\em a system of order $\infty$}, if the equivalence
relation $\RP^{[\infty]}$ is trivial, i.e. coincides with the
diagonal.
\end{de}

\begin{rem}
Similar to Theorem \ref{thm0}, one can show that the quotient of a
minimal system $(X,T)$ under $\RP^{[\infty]}$ is the maximal
$\infty$-step nilfactor of $(X,T)$.
\end{rem}

Let $(X,T)$ be a minimal system. It is easy to see that if $(X,T)$
is an inverse limit of minimal nilsystems, then $(X,T)$ is an
$\infty$-step nilsystem. Conversely, if $(X,T)$ is a minimal
$\infty$-step nilsystem, then $\RP^{[\infty]}=\Delta_X$. For any
integer $d\geq 1$ let $(X_d,T)$ be the quotient of $(X,T)$ under
$\RP^{[d]}$. Then $(X,T)=\displaystyle
\lim_{\longleftarrow}(X_d,T)_{d\in \N}$ as
$\Delta_X=\RP^{[\infty]}=\bigcap\limits_{d\geq 1}\RP^{[d]}$. In fact
we can show more as the following theorem says.

\begin{thm}
A minimal system is an $\infty$-step nilsystem if and only if it is
an inverse limit of minimal nilsystems.
\end{thm}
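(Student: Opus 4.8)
The ``only if'' direction is the substantive one; the ``if'' direction has already been observed just before the statement (an inverse limit of minimal nilsystems clearly satisfies $\RP^{[\infty]}=\Delta_X$, since each finite-step nilfactor kills $\RP^{[d]}$ for the appropriate $d$). So the plan is to prove: if $(X,T)$ is a minimal $\infty$-step nilsystem, then $(X,T)$ is an inverse limit of \emph{minimal nilsystems} (not merely of systems of order $d$). As noted in the excerpt, writing $X_d=X/\RP^{[d]}$ and using $\bigcap_{d\ge 1}\RP^{[d]}=\Delta_X$ gives $(X,T)=\invlim (X_d,T)_{d\in\N}$, and by Theorem~\ref{thm-1}(3) together with Theorem~\ref{HKM} each $(X_d,T)$ is an inverse limit of $(d-1)$-step minimal nilsystems. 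The gap is that an inverse limit of inverse limits of nilsystems is an inverse limit of nilsystems: one must produce a single sequence of honest minimal nilsystems whose inverse limit is $X$.

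First I would recall, for each $d$, the presentation of $X_d$ as $\invlim (N_{d,k},T)_{k\in\N}$ with each $(N_{d,k},T)$ a minimal $(d-1)$-step nilsystem and with factor maps $N_{d,k+1}\to N_{d,k}$; let $q_{d,k}\colon X\to N_{d,k}$ be the induced factor map (composition of $X\to X_d$ with the projection $X_d\to N_{d,k}$). Then I would form the countable family $\{q_{d,k}: d,k\in\N\}$ of factor maps from $X$ onto minimal nilsystems and take, for each $m$, a finite ``diagonal'' refinement: let $Y_m$ be the image of $X$ under the product map $\prod_{d,k\le m} q_{d,k}\colon X\to \prod_{d,k\le m} N_{d,k}$, equipped with the product transformation. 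Each $Y_m$ is a minimal subsystem of a finite product of minimal nilsystems; since a finite product of nilsystems is a nilsystem (the product of nilpotent Lie groups is nilpotent, the product of cocompact lattices is cocompact) and a minimal subsystem of a nilsystem is again a nilsystem — here I would cite the standard fact (Lesigne, Leibman; see \cite{HKM}) that a minimal closed invariant subset of a nilsystem, with the induced translation, is itself a nilsystem — each $(Y_m,T)$ is a minimal nilsystem. The natural projections $Y_{m+1}\to Y_m$ are factor maps, and because the family $\{q_{d,k}\}$ separates points of $X$ (its joint kernel is contained in $\bigcap_d \bigcap_k \ker q_{d,k}$, which maps into $\bigcap_d \RP^{[d]}=\Delta_X$), the induced map $X\to \invlim(Y_m,T)_{m\in\N}$ is injective, hence a conjugacy. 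This exhibits $X$ as an inverse limit of minimal nilsystems.

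The main obstacle is the step asserting that a minimal subsystem of a nilsystem is a nilsystem: this is what lets us collapse the double inverse limit, and it is the only place where genuine nilpotent-group structure (rather than soft inverse-limit bookkeeping) is used. If one wants to avoid quoting it, an alternative is to keep the maps $q_{d,k}$ as they are and simply reindex the double sequence $\{(N_{d,k},T)\}_{d,k}$ into a single sequence by a diagonal enumeration, then replace each term by the image $Y_m$ of $X$ in the product of the first $m$ chosen nilsystems; but one still needs $Y_m$ to be a nilsystem, so the same fact is required. A second, minor point to check is that each $Y_m$ (and the limit) is genuinely minimal: this is automatic since a continuous factor image of the minimal system $(X,T)$ is minimal, and an inverse limit of minimal systems over a system that surjects onto each is minimal. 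I expect the write-up to be short once the subsystem-of-a-nilsystem fact is invoked; the rest is the routine verification that the diagonal family separates points, which is immediate from $\RP^{[\infty]}=\Delta_X$.
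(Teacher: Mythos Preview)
Your proposal is correct and follows essentially the same approach as the paper: both reduce to the single nontrivial input that a minimal subsystem of a finite product of nilsystems is again a nilsystem (the paper cites \cite{Le} for this, and packages the construction as a separate Claim whose $Z$ is exactly your $Y_m$ --- the orbit closure of the diagonal image of a point of $X$ in the product). The only difference is bookkeeping at the end: the paper reorganizes the doubly-indexed family so that $((X_{k+1})_i,T)$ extends $((X_k)_i,T)$ and then extracts a cofinal diagonal sequence via density of $\bigcup_{k,i} C((X_k)_i)$ in $C(X)$, whereas you argue directly that the family $\{q_{d,k}\}$ separates points and hence $X\to\invlim Y_m$ is a conjugacy; your route is a bit more streamlined but not conceptually different.
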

\begin{proof} It remains to prove that if $(X,T)$ is a minimal $\infty$-step
nilsystem, then it is an inverse limit of minimal nilsystems. First
we may assume that $(X,T)=\displaystyle
\lim_{\longleftarrow}(X_d,T)_{d\in \N}$, where $X_d=X/\RP^{[d]}$ for
any $d\ge 1$. By Theorem \ref{HKM}, for any $d\geq 1$, $(X_d,T)$ is
an inverse limit of minimal $d$-step nilsystems.

We need the following claim.

\medskip

\noindent {\bf Claim:} {\em Let $(Y,S)$ be a minimal system, and let
$(Y_i,S)$ be factors of $(Y,S)$ which are $k_i$-step nilsystems,
where $1\le i\le n$ and $\max\{k_i:1\le i\le n\}= k$. Then there
exists a $k$-step nilsystem $(Z,S)$ such that it is a factor of
$(Y,S)$ and is an extension of $(Y_i,S)$ for $1\leq i\leq n$.}

\medskip
\noindent {\em Proof of Claim:} Let $\pi_i$ be the factor map
between $(Y,S)$ and $(Y_i,S)$ and assume $(Y_i,S)$ has the form of
$(H_i/\Gamma_i, h_i)$, where $h_i\in H_i$ and $S$ is the
left translation by $h_i$ on $Y_i$. Set
$G=H_1\times\cdots\times H_n$, $\Gamma=\Gamma_1\times\cdots
\Gamma_n$ and $g=(h_1,\ldots,h_n)$. Then $G$ is a $k$-step nilpotent
Lie group and $\Gamma$ is a discrete uniform subgroup of $G$. Let
$S: G/\Gamma\rightarrow G/\Gamma$ be the transformation $x\mapsto g
x$. Choose any point $y\in Y$ and let $Z=\overline{\{g^n (\pi_1(y),
\ldots,\pi_n(y)):n\in\Z\}}\subset G/\Gamma$ be the orbit closure of
$(\pi_1(y), \ldots,\pi_n(y))$ under $S$. Since nilsystems are
distal, $(Z,S)$ is minimal. Moreover it is a $k$-step nilsystem
\cite{Le}. And of course, $(Z,S)$ is a factor of $(Y,S)$ and it is
an extension of $(Y_i,S)$ for $1\leq i\leq n$.

\medskip

Now we show that $(X,T)$ is an inverse limit of minimal nilsystems
using previous claim. As $(X_1,T)$ is a system of order 1, it is an
inverse limit of some 1-step nilsystems $((X_1)_i,T)_{i\in\N}$ by
Theorem \ref{HKM}. Similarly, as $(X_2,T)$ is a system of order 2,
it is an inverse limit of some 2-step nilsystems
$((X_2)_i,T)_{i\in\N}$. Note that all $((X_1)_i,T)_{i\in\N}$ and
$((X_2)_i,T)_{i\in\N}$ are factors of $X_2$. By the above claim, we
may reconstruct 2-step nilsystems $((X_2)_i,T)_{i\in\N}$ such that
for all $i\in\N$, $((X_2)_{i+1},T)$ is an extension of
$((X_1)_1,T)$, $((X_1)_2,T)$, $\ldots$, $((X_1)_{i+1},T)$ and
$((X_2)_1,T),\ldots,((X_2)_i,T)$.

Similarly and inductively, for any given $k\in\N$ $(X_k,T)$ can be
written as the inverse limit of some $k$-step nilsystems
$((X_k)_i,T)_{i\in\N}$ satisfying that for all $i\in\N$,
$((X_{k+1})_i,T)$ is an extension of $((X_k)_i,T)$.

Since $(X,T)$ is the inverse limit of $(X_k,T)_{k\in\N}$,
$\bigcup_{k\in\N}C(X_k)$ is dense in $C(X)$. And as $(X_k,T)$ is the
inverse limit of $((X_k)_i,T)_{i\in\N}$, we have that
$\bigcup_{k\in\N}\bigcup_{i\in\N} C((X_k)_i)$ is dense in $C(X)$. So
we may choose a sequence of $(k_n,i_n)_{n\in\N}\subset \N\times\N$
with $k_n<k_{n+1},i_n<i_{n+1}$ such that
$\bigcup_{n\in\N}C((X_{k_n})_{i_n})$ is dense in $C(X)$. Thus
$(X,T)$ is the inverse limit of $((X_{k_n})_{i_n},T)_{n\in\N}$. That
is, $(X,T)$ is an inverse limit of minimal nilsystems. This
completes the proof of the theorem.

\end{proof}

Since minimal nilsystems are uniquely ergodic, it is easy to see
that minimal $\infty$-step nilsystems are also uniquely ergodic.
\medskip

\subsection{About $\RP^{[\infty]}=\RP^{[d]}$}

Observe that if $\Q^{[d+1]}= \Q^{[d]}\times \Q^{[d]}$ then
$\RP^{[d-1]}=\RP^{[d]}$. Indeed, if $(x,y,\ldots,y)$ and
$(y,\ldots,y)\in\Q^{[d]}$, then
$(x,y,\ldots,y,y,\ldots,y)\in\Q^{[d+1]}$. Moreover, the system is
weakly-mixing and thus $\RP^{[d]}=X\times X$ for any $d\geq 1$,
as shows the following proposition.

\begin{prop}
Let $(X,T)$ be a minimal system.
If $\Q^{[d+1]}=\Q^{[d]}\times
\Q^{[d]}$ for some $d\in \N$ then $X$ is weakly-mixing and hence
$\Q^{[d]}=X^{[d]}$ for any $d \in \N$.
\end{prop}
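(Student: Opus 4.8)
### Proof proposal

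The plan is to exploit the hypothesis $\Q^{[d+1]}=\Q^{[d]}\times\Q^{[d]}$ to produce, inside $\Q^{[2]}$, points of the form $(x,y,y,y)$ with $x$ and $y$ ranging freely over $X$, which by Theorem~\ref{thm-1}(1) (the case $d=1$) would force $\RP^{[1]}=\RP(X,T)=X\times X$; a minimal system whose regionally proximal relation is everything is weakly mixing, and for a minimal weakly mixing system it is classical (and easy, by a density/transitivity argument on $X^{[d]}$) that $\Q^{[d]}=X^{[d]}$ for all $d$. So the real content is to push the hypothesis down to level $1$.

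First I would record the elementary observation already noted before the proposition: from $\Q^{[d+1]}=\Q^{[d]}\times\Q^{[d]}$ one gets $\RP^{[d-1]}=\RP^{[d]}$, because given $(x,y,\dots,y)\in\Q^{[d]}$ and $(y,\dots,y)=y^{[d]}\in\Q^{[d]}$, the pair $\big((x,y,\dots,y),y^{[d]}\big)$ lies in $\Q^{[d]}\times\Q^{[d]}=\Q^{[d+1]}$, i.e. $(x,y,\dots,y)\in\Q^{[d+1]}$ with $2^{d+1}-1$ repeated coordinates equal to $y$, so $(x,y)\in\RP^{[d]}$; combined with $\RP^{[d]}\subseteq\RP^{[d-1]}$ this gives equality. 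Next I want to propagate the splitting hypothesis upward: I claim $\Q^{[e+1]}=\Q^{[e]}\times\Q^{[e]}$ for every $e\ge d$. This should follow by induction on $e$ using the decomposition of a parallelepiped into its ``lower'' and ``upper'' faces together with the fact that $\Q^{[e+1]}$ is the closure of orbits of $x^{[e+1]}$ under $\F^{[e+1]}=\F^{[e]}\times\F^{[e]}$ acting appropriately; the inductive hypothesis $\Q^{[e]}=\Q^{[e-1]}\times\Q^{[e-1]}$ lets one realize arbitrary pairs of $\Q^{[e]}$-points as a single $\Q^{[e+1]}$-configuration, and then one reassembles. Alternatively, I can argue directly: the hypothesis at level $d$ already yields, via iterating the "doubling" trick, that $\Q^{[e]}$ is a Cartesian power $(\Q^{[d]})^{2^{e-d}}$ of faces, which in particular forces $\RP^{[d-1]}=\RP^{[d]}=\RP^{[d+1]}=\cdots$. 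I would then iterate the $\RP^{[d-1]}=\RP^{[d]}$ step: having established the analogous splitting at every level $\ge d$, I conclude $\RP^{[e-1]}=\RP^{[e]}$ for all $e\ge d$, hence $\RP^{[d-1]}=\RP^{[e]}$ for all $e\ge d-1$, and in particular $\RP^{[d-1]}=\RP^{[\infty]}$.

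To descend all the way to $\RP^{[1]}$, the cleanest route is to show the splitting also descends one level: if $\Q^{[d+1]}=\Q^{[d]}\times\Q^{[d]}$, I want $\Q^{[d]}=\Q^{[d-1]}\times\Q^{[d-1]}$. For this, take $\mathbf{u},\mathbf{v}\in\Q^{[d-1]}$; I must exhibit $(\mathbf{u},\mathbf{v})\in\Q^{[d]}$. Since $\Q^{[d-1]}$ is the closure of $\F^{[d-1]}$-orbits of diagonal points and everything in sight is closed, it suffices to treat $\mathbf{u}=S\,x^{[d-1]}$, $\mathbf{v}=R\,y^{[d-1]}$ for $S,R\in\F^{[d-1]}$ and $x,y\in X$; by minimality choose $m_k\to\infty$ with $T^{m_k}x\to y$, so $(\,\cdot\,,\,T^{m_k}(\,\cdot\,)\,)$-type face transformations applied in the last coordinate direction connect $x^{[d-1]}$ to (a point converging to) $y^{[d-1]}$, realizing $(S x^{[d-1]},\,R' y^{[d-1]})$ in $\Q^{[d]}$ for suitable $R'$; then act by face transformations in directions $1,\dots,d-1$ to move $R'$ to $R$ while staying in $\Q^{[d]}$, using that $\F^{[d]}$ restricted to the last block is $\F^{[d-1]}$. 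Granting this descent, we run it down to $\Q^{[2]}=\Q^{[1]}\times\Q^{[1]}=X\times X$ (note $\Q^{[1]}=X$ for minimal $X$), whence by Theorem~\ref{thm-1}(1), $(x,y)\in\RP^{[1]}$ for all $x,y$, i.e. $\RP(X,T)=X\times X$; this is exactly weak mixing for a minimal system. Finally, for a minimal weakly mixing system one shows $\Q^{[d]}=X^{[d]}$ by induction on $d$: $(X^{[d-1]},T^{[d-1]})$ with the $\F^{[d-1]}$-action is minimal (a consequence of weak mixing — the $d$-fold product arguments), the set $\{Sx^{[d]}:S\in\F^{[d]},x\in X\}$ is dense in $X^{[d]}$ because its projection to the first $X^{[d-1]}$ block is dense and the last block can be filled using weak mixing of $T$, and $\Q^{[d]}$ is its closure.

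The main obstacle is the descent step $\Q^{[d+1]}=\Q^{[d]}\times\Q^{[d]}\ \Rightarrow\ \Q^{[d]}=\Q^{[d-1]}\times\Q^{[d-1]}$ (equivalently, getting from $\RP^{[d-1]}=\RP^{[d]}$ all the way down to $\RP^{[1]}=X\times X$): there is no reason in general that collapsing at one level collapses at all lower levels, so one genuinely needs the structural strength of the full Cartesian splitting, not just equality of the $\RP$'s. I expect that making the face-transformation bookkeeping in that step precise — tracking which coordinate directions are available to move $S$ and $R$ independently while remaining inside $\Q^{[d]}$ — is where the care is required; the weak-mixing-implies-$\Q^{[d]}=X^{[d]}$ direction and the initial doubling observation are routine.
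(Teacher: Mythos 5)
The central step of your outline --- the descent $\Q^{[d+1]}=\Q^{[d]}\times\Q^{[d]}\ \Rightarrow\ \Q^{[d]}=\Q^{[d-1]}\times\Q^{[d-1]}$ --- is a genuine gap, and the sketch you give for it cannot be repaired as stated. Notice that your argument for the descent (write $\mathbf{u}=Sx^{[d-1]}$, $\mathbf{v}=Ry^{[d-1]}$, connect $x$ to $y$ by minimality in the $d$-th direction, then ``move $R'$ to $R$'' by face transformations in directions $1,\ldots,d-1$) uses only minimality and never invokes the hypothesis $\Q^{[d+1]}=\Q^{[d]}\times\Q^{[d]}$; if it were valid it would prove $\Q^{[d]}=\Q^{[d-1]}\times\Q^{[d-1]}$ for \emph{every} minimal system, hence that every minimal system is weakly mixing, which is false (for an irrational rotation $\Q^{[2]}$ is a proper subset of $X^{[2]}=\Q^{[1]}\times\Q^{[1]}$). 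The precise failure is exactly the bookkeeping you flag at the end: for $j\le d-1$ the face transformation $T^{[d]}_j=T^{[d-1]}_j\times T^{[d-1]}_j$ acts on \emph{both} halves of $X^{[d]}$ simultaneously, so any attempt to adjust $R'$ also moves $S$; the only generator acting on the second half alone is $T^{[d]}_d=\id^{[d-1]}\times T^{[d-1]}$, which cannot turn $R'y^{[d-1]}$ into an arbitrary $Ry^{[d-1]}$. So the two blocks cannot be moved independently, the descent is not established (nor is your upward propagation $\Q^{[e+1]}=\Q^{[e]}\times\Q^{[e]}$ for $e\ge d$, though that part is not really needed), and only the initial observation $\RP^{[d-1]}=\RP^{[d]}$ and the final implication ``$\RP^{[1]}=X\times X$ gives weak mixing and $\Q^{[d]}=X^{[d]}$'' survive.

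The paper gets to $\RP^{[1]}=X\times X$ without ever descending the Cartesian splitting, consuming the hypothesis once at level $d$. By minimality the alternating point $(x,y,x,y,\ldots,x,y)$ lies in $\Q^{[d]}$ (approximate $y$ by $T^nx$ and use a single direction), and $a^{[d]}\in\Q^{[d]}$; the hypothesis then puts $(x,y,\ldots,x,y,a,\ldots,a)$ in $\Q^{[d+1]}$. Since $\Q^{[d+1]}$ is invariant under Euclidean permutations and the projection onto a face lies in $\Q^{[d]}$, one can permute coordinates so that the first half again has the pattern ``alternating $x,y$ block followed by a constant $a$ block'', and project; iterating this halving lands on $(x,y,a,a)\in\Q^{[2]}$, which yields $(x,y)\in\RP^{[1]}$ directly from the definition of regional proximality (take $x'\approx x$, $y'=T^mx'\approx y$ with $T^nx'$ and $T^ny'$ both close to $a$). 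As $x,y$ are arbitrary, $\RP^{[1]}=X\times X$ and the conclusion follows as in your last paragraph. If you want to keep your scheme, you must replace the descent by an argument of this type that works at level $d$ itself rather than pushing the splitting downward.
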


\begin{proof}
Let $x,y,a\in X$. By minimality $(x,y,x,y,\ldots,x,y)
\in \Q^{[d]}$ and by hypothesis the point
${\bf{x}}=(x,y,x,y,\ldots,x,y,a,\ldots ,a) \in \Q^{[d+1]}$.
If $d=1$, $(x,y,a,a)\in \Q^{[2]}$ and then $(x,y)\in \RP^{[1]}$.
For any integer $d>1$, applying Euclidean permutations, we get that
${\bf{y}}=(x,y,\ldots,x,y,a,\ldots,a,x,y\dots,x,y,a,\ldots,a ) \in \Q^{[d+1]}$ too.
Considering the first half of ${\bf{y}}$ and iterating the process we
finish in the case $d=1$. We conclude $\RP^{[1]}=X\times X$ and the result
follows.
\end{proof}

Now, if $\RP^{[d-1]}=\RP^{[d]}$ for some $d$ the following theorem
states that all regionally proximal relations of higher order
coincide and thus $\RP^{[\infty]}=\RP^{[d-1]}$. This result is
natural but its proof is somewhat involved, so we leave it for the
appendix. For the definition of $Z_d$ see Section 7.

\begin{thm}\label{noteasy}
\begin{enumerate}

\item Let $(X,T)$ be a minimal system. If $\RP^{[d]}=\RP^{[d+1]}$ for
some $d\in\N$, then $\RP^{[n]}=\RP^{[d]}$ for all $n\ge d$.

\item Let $(X,\mathcal{B},\mu,T)$ be an ergodic measure preserving
system. If $Z_d=Z_{d+1}$ for some $d\in \N$, then $Z_n=Z_d$ for each
$n\ge d$.

\item Let $(X,T)$ be a minimal system and $\mu$ be an ergodic Borel
probability measure on $X$. If $Z_n$ is isomorphic (with respect to
the corresponding invariant measure) to $X_{n}=X/\RP^{[n]}$ for some
$n\in \N$, then $Z_k$ is isomorphic to $X_{k}=X/\RP^{[k]}$ for all
$k\le n$.
\end{enumerate}
\end{thm}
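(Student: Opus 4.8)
The plan is to treat the three statements by reducing them to one another and to the single nontrivial ingredient, namely part (1). For part (1), the strategy is to argue inductively: assuming $\RP^{[n]}=\RP^{[d]}$, I want to deduce $\RP^{[n+1]}=\RP^{[n]}$, and then iterate. By Theorem \ref{thm-1}(1), $(x,y)\in\RP^{[n]}$ iff $(x,y,\ldots,y)\in\Q^{[n+1]}$, so the containment $\RP^{[n+1]}\subseteq\RP^{[n]}$ is automatic and we must produce the reverse inclusion. The key geometric observation is the one already noted in the text before the Proposition: if $(x,y,\ldots,y)\in\Q^{[n]}$ and $(y,y,\ldots,y)\in\Q^{[n]}$ — the latter trivially true — then concatenating gives $(x,y,\ldots,y,y,\ldots,y)\in\Q^{[n+1]}$, which is exactly a witness for $(x,y)\in\RP^{[n]}$ viewed at level $n$. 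So the real content is to "lift" the hypothesis $\RP^{[d]}=\RP^{[d+1]}$, which says a collapse happens at the first possible stage, through to arbitrary stages. The natural machinery for this is the fact (from \cite{HKM,SY}, available via Theorem \ref{thm0}) that $X/\RP^{[n]}$ is the maximal system of order $n$, combined with the description of $\Q^{[n+1]}$ via the face group $\F^{[n+1]}$ and its structure as an extension of $\F^{[n]}\times\F^{[n]}$: a point of $\Q^{[n+1]}$ projects to a pair of points of $\Q^{[n]}$ agreeing on their common face, and the "new" freedom in passing from level $n$ to $n+1$ is controlled by $\RP^{[n]}$ versus $\RP^{[n+1]}$.

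Concretely, for part (1) I would set $Y=X/\RP^{[d]}$, so that $(Y,T)$ is a system of order $d$ by Theorem \ref{thm-1}(3), hence an inverse limit of $d$-step nilsystems by Theorem \ref{HKM}. The hypothesis $\RP^{[d]}=\RP^{[d+1]}$ means that $X$ and $Y$ have the same maximal $(d+1)$-step nilfactor, i.e. $Y$ is already a system of order $d+1$; but an inverse limit of $d$-step nilsystems is trivially a system of order $d+1$ as well (since $\RP^{[d+1]}\subseteq\RP^{[d]}=\Delta$ on $Y$), so this is consistent and the point is to show it forces $\RP^{[n]}(X)=\RP^{[d]}(X)$ for all $n$. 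The cleanest route: show that on any system of order $d$ one has $\RP^{[n]}=\Delta$ for all $n\ge d$ — which is immediate from Theorem \ref{thm-1}(3) and the nesting $\RP^{[n]}\subseteq\RP^{[d]}=\Delta$ — and then invoke the lifting property $\pi\times\pi(\RP^{[n]}(X))=\RP^{[n]}(Y)$ from Theorem \ref{thm0}(1) with $\pi:X\to Y$ the quotient by $\RP^{[d]}$. This gives $\pi\times\pi(\RP^{[n]}(X))=\Delta_Y$ for all $n\ge d$, i.e. $\RP^{[n]}(X)\subseteq R_\pi=\RP^{[d]}(X)$; combined with $\RP^{[n]}(X)\subseteq\RP^{[d]}(X)$ from nesting this is not yet equality, so I also need $\RP^{[d]}(X)\subseteq\RP^{[n]}(X)$, which is where the hypothesis $\RP^{[d]}=\RP^{[d+1]}$ together with the concatenation argument above must be used: if $(x,y)\in\RP^{[d]}=\RP^{[d+1]}$ then $(x,y,\ldots,y)\in\Q^{[d+2]}$, and an induction on $n$ using the extension structure $\F^{[n+1]}=\F^{[n]}\times\F^{[n]}$ propagates this to $(x,y,\ldots,y)\in\Q^{[n+1]}$ for all $n\ge d$.

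For part (2), I would run the identical argument in the measure-preserving category: the Host–Kra factors $Z_n$ play the role of $X/\RP^{[n]}$, the inclusion $Z_n\subseteq Z_{n+1}$ is the measurable analogue of the nesting of regionally proximal relations, and the hypothesis $Z_d=Z_{d+1}$ is propagated upward by the same concatenation-in-cube-space reasoning, now using the measurable cubes $(X^{[n]},\mu^{[n]})$ and the fact that $Z_d=Z_{d+1}$ forces $Z_d$ to be its own order-$(d+1)$ factor. Part (3) is then essentially formal: the isomorphism $Z_n\cong X_n$ is an isomorphism of a decreasing-then-increasing tower, and the hypothesis at level $n$ together with the monotonicity relations $Z_k\hookrightarrow Z_n$ and $X_k\twoheadleftarrow X_n$ (both towers being exhausted from below and above by nilsystem factors) forces the isomorphism to restrict compatibly to each level $k\le n$; one checks that the map $Z_n\to X_n$ carries the sub-$\sigma$-algebra defining $Z_k$ onto the one defining $X_k$, which follows because both are characterized by the same universal property (maximal order-$k$ factor) inside the isomorphic systems. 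I expect the main obstacle to be the propagation step — verifying carefully that a collapse $\RP^{[d]}=\RP^{[d+1]}$ (or $Z_d=Z_{d+1}$) really does climb to all higher levels; this is exactly why the authors defer it to the appendix, and it will require a genuine induction exploiting the recursive structure $\Q^{[n+1]}\to\Q^{[n]}\times\Q^{[n]}$ and the face-group decomposition rather than any soft argument.
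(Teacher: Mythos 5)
There is a genuine gap: the step that carries all the content of part (1) --- propagating the collapse $\RP^{[d]}=\RP^{[d+1]}$ to every level $n\ge d$ --- is never actually proved, and the two mechanisms you invoke for it are both incorrect. The ``concatenation'' you call a key observation is not a general fact: by Theorem \ref{thm-1}(1), the concatenated point $(x,y,\ldots,y,y,\ldots,y)\in\Q^{[n+1]}$ is \emph{equivalent} to $(x,y)\in\RP^{[n]}$, so asserting that $(x,y,\ldots,y)\in\Q^{[n]}$ and $(y,\ldots,y)\in\Q^{[n]}$ concatenate is exactly asserting $\RP^{[n-1]}\subseteq\RP^{[n]}$, which is false in general (take a minimal $2$-step nilsystem that is not a rotation: there $(x,y)\in\RP^{[1]}\setminus\Delta$ exists while $\RP^{[2]}=\Delta$). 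The paper's remark before Proposition 3.7 performs this concatenation only under the hypothesis $\Q^{[n+1]}=\Q^{[n]}\times\Q^{[n]}$, which by that very proposition forces weak mixing. Likewise $\F^{[n+1]}\neq\F^{[n]}\times\F^{[n]}$: since the face transformations commute, $\F^{[n+1]}=\{(S,S(T^{[n]})^{k}):S\in\F^{[n]},k\in\Z\}$, which is neither contained in nor equal to the full product, so no induction ``using the extension structure $\F^{[n+1]}=\F^{[n]}\times\F^{[n]}$'' can be run as stated. You yourself flag this propagation as the main obstacle, but the proposal leaves it unresolved, and the same gap is inherited by your sketch of part (2) (``the same concatenation-in-cube-space reasoning'').

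The paper's actual route is different and essentially algebraic, and it runs in the opposite direction: part (2) is proved first and part (1) is deduced from it. For (2), each ergodic $n$-step nilsystem occurring in $Z_n$ is written as $G/\Gamma$ with $G$ spanned by the identity component and the translating element; Host's description of the order-$k$ factors as $G/(G_{k+1}\Gamma)$ (Lemma \ref{h-k}) converts $Z_d=Z_{d+1}$ into $G_{d+1}\Gamma=G_{d+2}\Gamma$, and then discreteness of $\Gamma$, connectedness of $G_k$ for $k\ge 2$, and equality of Lie-algebra dimensions give $G_{d+1}=G_{d+2}$, hence $G_{k+1}=G_{d+1}$ for all $k$, and nilpotency forces $G_{d+1}=\{e\}$, so $Z_n=Z_d$. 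Part (1) then follows by applying (2) to $X_n=X/\RP^{[n]}$ with its unique invariant measure, using that for a minimal system of finite order the maximal measurable and topological factors of each order coincide (Proposition \ref{h-k-m} and Lemma \ref{basic}); this is also the missing input in your sketch of part (3), whose ``universal property'' argument needs precisely the fact that measurable factors of systems of finite order are automatically topological. If you want to salvage your outline, you would have to either prove the propagation step by these structural means or find a genuinely new cube-space argument; the soft reductions you list (lifting via Theorem \ref{thm0}, nesting of the relations) only reprove the trivial inclusion $\RP^{[n]}\subseteq\RP^{[d]}$.
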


\section{The structure of minimal systems without nontrivial
${\rm Ind}_{fip}$-pairs} \label{section-ind}

In this section we discuss the structure of minimal systems without
nontrivial ${\rm Ind}_{fip}$-pairs. We will show that such systems
are almost one-to-one extensions of their maximal $\infty$-step nilfactors.

\subsection{A criterion to be an ${\rm Ind}_{fip}$-pair}

First we characterize ${\rm Ind}_{fip}$-pairs using dynamical parallelepipeds.

\medskip

Let $(X,T)$ be a transitive system. It is easy to check that ${\bf
x}=(x_\ep:\ep\subset[d])\in\Q^{[d]}$ if and only if for any
neighborhood $U_\ep$ of $x_\ep$ respectively, there exist positive
integers ${\bf n}=(n_1,n_2,\ldots,n_d)\in\N^d$ such that
$\bigcap_{\ep\subset[d]}T^{-{\bf n}\cdot\ep}U_\ep\neq\emptyset$.
Moreover, the point in $\bigcap_{\ep\subset[d]}T^{-{\bf
n}\cdot\ep}U_\ep$ can be chosen to be in the orbit of a transitive
point.

\begin{lem}\label{lem}
Let $(X,T)$ be a transitive t.d.s. and $(x_1, x_2)\in X\times X$
with $x_1\not=x_2$. Then, $\{x_1,x_2\}^{[d]}\subset \Q^{[d]}$ for
all integer $d\geq 1$ if and only if $(x_1,x_2)$ is an ${\rm
Ind}_{fip}$-pair.
\end{lem}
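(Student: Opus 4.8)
The plan is to prove both implications by translating the combinatorial condition defining an ${\rm Ind}_{fip}$-pair into the language of parallelepipeds via the elementary characterization of $\Q^{[d]}$ recalled just before the lemma, namely that ${\bf x}=(x_\ep)\in\Q^{[d]}$ iff for all choices of neighborhoods $U_\ep$ of the $x_\ep$ there is ${\bf n}\in\N^d$ with $\bigcap_{\ep\subset[d]}T^{-{\bf n}\cdot\ep}U_\ep\neq\emptyset$. The key observation is that the point $\{x_1,x_2\}^{[d]}\in X^{[d]}$ has only two distinct coordinates, $x_1$ in coordinate $\emptyset$ and $x_2$ in the coordinates $\ep\neq\emptyset$; correspondingly one only needs two neighborhoods $U_1$ of $x_1$ and $U_2$ of $x_2$, and the intersection condition becomes $U_1\cap\bigcap_{\emptyset\neq\ep\subset[d]}T^{-{\bf n}\cdot\ep}U_2\neq\emptyset$, i.e. (after intersecting with $T^{-0}U_1$) that the finite set $\{{\bf n}\cdot\ep:\emptyset\neq\ep\subset[d]\}$ together with the ``label $1$ at $0$, label $2$ elsewhere'' pattern witnesses membership in $\Ind(U_1,U_2)$.

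First I would prove the forward direction. Assume $\{x_1,x_2\}^{[d]}\subset\Q^{[d]}$ for every $d$. Fix neighborhoods $U_1,U_2$ of $x_1,x_2$ and fix $m\in\N$; I must produce a finite IP-set of length $m$ inside $\Ind(U_1,U_2)$. Apply the $\Q^{[d]}$-criterion with $d=m$ to the point $\{x_1,x_2\}^{[m]}$, using $U_\emptyset=U_1$ and $U_\ep=U_2$ for $\ep\neq\emptyset$: this yields ${\bf n}=(n_1,\ldots,n_m)\in\N^m$ and a point $z$ with $z\in U_1$ and $T^{{\bf n}\cdot\ep}z\in U_2$ for all nonempty $\ep\subset[m]$. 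Then $F=FS(\{n_i\}_{i=1}^m)=\{{\bf n}\cdot\ep:\emptyset\neq\ep\subset[m]\}$ is a finite IP-set; I claim $F\in\Ind(U_1,U_2)$. Indeed, for any nonempty $J\subseteq F$ and any choice $s\in\{1,2\}^J$, each $j\in J$ is of the form ${\bf n}\cdot\ep_j$ for some nonempty $\ep_j\subset[m]$, so $T^j z=T^{{\bf n}\cdot\ep_j}z\in U_2$, and also $z\in U_1$ hence $T^0 z\in U_1$; picking the translate of $z$ appropriately one gets a point in $\bigcap_{j\in J}T^{-j}U_{s(j)}$ — the only subtlety is when $s(j)=1$ for some $j$, which I handle by noting $z\in U_1\cap T^{-j}U_2$ is not enough, so instead I use that $T^0z=z\in U_1$ gives the ``$1$'' slots and the $U_2$-slots are covered as above, and a point simultaneously in $U_1$ (via the $0$-coordinate) works only if all $1$-labels can be realized. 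To make this clean I would instead present the IP-set as $FS(\{n_i\})$ together with a shift: more carefully, I realize the standard trick is that having $z\in\bigcap_{\ep\subset[m]}T^{-{\bf n}\cdot\ep}U_{s(\ep)}$ for the \emph{specific} pattern $s(\emptyset)=1,s(\ep)=2$ already forces, for every sub-IP-set, the required nonempty intersection by restricting to a face subcube — this monotonicity under passing to faces (which corresponds to setting some $n_i=0$) is exactly what gives all smaller patterns.

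For the converse, assume $(x_1,x_2)$ is an ${\rm Ind}_{fip}$-pair and fix $d$; I want $\{x_1,x_2\}^{[d]}\in\Q^{[d]}$. By the $\Q^{[d]}$-criterion it suffices, given arbitrary neighborhoods $U_1$ of $x_1$ and $U_2$ of $x_2$, to find ${\bf n}\in\N^d$ with a common point $z$ satisfying $z\in U_1$ and $T^{{\bf n}\cdot\ep}z\in U_2$ for all $\emptyset\neq\ep\subset[d]$. Since $(x_1,x_2)$ is an ${\rm Ind}_{fip}$-pair, $\Ind(U_1,U_2)$ contains a finite IP-set of length $\geq d$, say $FS(\{p_i\}_{i=1}^N)$ with $N\ge d$; set $n_i=p_i$ for $1\le i\le d$. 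The defining property of an independence set, applied to $J=\{{\bf n}\cdot\ep:\emptyset\neq\ep\subset[d]\}$ (a subset of our IP-set) together with the pattern ``label everything $2$, plus the empty-sum handled separately via $U_1$'' — again using that the full pattern including $s=1$ on an added coordinate $0$ reduces to the independence property on the enlarged configuration — produces the desired $z$. The genuinely nontrivial point (and the main obstacle) is the bookkeeping that the distinct values ${\bf n}\cdot\ep$ for $\ep\subset[d]$ might collide, so $\{{\bf n}\cdot\ep:\ep\subset[d]\}$ may have fewer than $2^d$ elements and one must be careful that a single sum $n$ can arise from two different $\ep$'s requiring incompatible labels; this is avoided by choosing the $p_i$ (equivalently $n_i$) growing fast enough — e.g. lacunary, $n_{i+1}>\sum_{j\le i}n_j$ — which one may do since the ${\rm Ind}_{fip}$ property furnishes arbitrarily long finite IP-sets and one can pass to a sub-IP-set generated by a sparse subsequence, and a sub-IP-set of an independence set is again an independence set. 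With distinctness of $\{{\bf n}\cdot\ep\}$ secured, the labels are well-defined and the independence property gives exactly one point $z$ witnessing $\{x_1,x_2\}^{[d]}\in\Q^{[d]}$. Letting $U_1,U_2$ shrink and using compactness (the $\Q^{[d]}$ are closed) completes the argument.

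Finally I would note that the hypothesis $x_1\neq x_2$ and transitivity are used only to invoke the stated neighborhood characterization of $\Q^{[d]}$ (so that membership can be detected by the orbit of a transitive point), and that no distality is needed. The main obstacle, as indicated, is purely the combinatorial care needed to align finite IP-sets with the face structure of $\{0,1\}^d$ while avoiding coordinate collisions; everything else is a direct application of the parallelepiped criterion.
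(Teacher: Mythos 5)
Your forward implication rests on a misreading of the hypothesis: $\{x_1,x_2\}^{[d]}$ is not a single point of $X^{[d]}$ with $x_1$ at the $\emptyset$-coordinate and $x_2$ elsewhere, it is the set of all $2^{2^d}$ points whose coordinates lie in $\{x_1,x_2\}$, and the hypothesis is that every one of them belongs to $\Q^{[d]}$. Because of this, your argument only produces, for each length $m$, one configuration $z\in U_1$, $T^{{\bf n}\cdot\ep}z\in U_2$ for $\ep\neq\emptyset$, i.e.\ a witness for the single labeling ``$1$ at time $0$, $2$ on the IP-set''. Independence of $F=FS(\{n_i\})$ demands, for every finite $J\subseteq F$ and \emph{every} $s\in\{1,2\}^J$, a (pattern-dependent) point of $\bigcap_{j\in J}T^{-j}U_{s(j)}$, and your ``restrict to a face subcube'' step cannot manufacture mixed labelings: faces of $\{0,1\}^m$ only inherit the labels already present in the configuration, and no translate of $z$ helps (e.g.\ $T^{-n_1}z$ controls the time $n_1$ but loses all control at time $n_2$), so labelings like $s(n_1)=1,\ s(n_2)=2$, or two $1$'s at nonzero times, are never witnessed. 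Note also that applying the $\Q^{[d]}$-criterion separately to each point of $\{x_1,x_2\}^{[d]}$ would not suffice either, since each pattern would come with its own vector ${\bf n}_s$, while an independence set must be one IP-set valid for all patterns simultaneously. The missing idea, which is the heart of the paper's proof, is to use the hypothesis in dimension $2^d+d$: all $2^{2^d}$ points of $\{x_1,x_2\}^{[d]}$ are assembled into a single point of $\{x_1,x_2\}^{[2^d+d]}\subset\Q^{[2^d+d]}$, giving one base point $x$ and one tuple $(p_0,p_1,\ldots,p_{2^d+d})$ such that, for each pattern $\eta$, the translate $T^{p_0+\sum_{i\in\eta}p_{i+d}}x$ witnesses that pattern with the common IP-set $FS(\{p_1,\ldots,p_d\})$.

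Your converse is the right translation (it is also the direction the paper treats briefly), and you correctly spotted the collision problem that the map $\ep\mapsto{\bf n}\cdot\ep$ need not be injective; but your repair is not valid as stated: passing to ``a sub-IP-set generated by a sparse subsequence'' of the generators does not make anything lacunary, since the generators supplied by the ${\rm Ind}_{fip}$ property may all be equal (say all equal to $1$), and then every subsequence is still bounded. What does work is to group the generators into disjoint blocks and use the block sums as new generators (block sums stay inside the original IP-set, and any subset of an independence set is an independence set), choosing the blocks greedily so the block sums are super-increasing; this is possible because the IP-sets can be taken arbitrarily long. You should also address the $\ep=\emptyset$ coordinate, whose exponent $0$ is not an element of the IP-set (e.g.\ reserve one extra generator and translate the witnessing point by it). Finally, the closing appeal to shrinking neighborhoods and compactness is unnecessary: the neighborhood criterion for membership in $\Q^{[d]}$ already quantifies over all neighborhoods.
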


\begin{proof}
Let $(x_1, x_2)\in X\times X$ with $x_1\not=x_2$.
First, we assume $\{x_1,x_2\}^{[d]}\subset \Q^{[d]}$ for all integer $d\geq 1$.

Let $U_1$, $U_2$ be neighborhoods of $x_1$ and $x_2$ respectively and fix $d \in \N$.
We show there exist positive integers $\{p_1,p_2,\ldots,p_d\}$ such that
$$F=FS(\{p_i\}_{i=1}^d)=\{p_{i_1}+\cdots+p_{i_k}:1\leq i_1<\cdots<i_k\leq d\}$$
belongs to $\Ind(U_1,U_2)$.

Since $\{x_1,x_2\}^{[d]}$ has $2^{2^d}$ elements we write it
$\{{\bf x_\eta}: \eta\subset [2^d]\}$.
Now, for any $\ep\subset [d]$
and $\eta\subset [2^d]$, let $x_{\ep\eta}=(\bf x_\eta)_\ep$ and construct
the point ${\bf x}=(x_{\ep\eta}:\ep\subset [d], \eta\subset
[2^d])\in \{x_1,x_2\}^{[2^d+d]}$. Clearly, if we choose an
identification of coordinates, we can write
${\bf x}=(x_{\rho}:\rho\subset [2^d+d])$.
For any $\rho\subset [2^d+d]$, let
$U_\rho=U_i$ if $x_\rho=x_i$, $i=1,2$. Then the product set
$\bigotimes_{\rho\subset [2^d+d]}U_\rho$ is a
neighborhood of ${\bf x}$.
Since, by hypothesis, ${\bf x}\in\Q^{[2^d+d]}$, then
there exist $x\in X$, $p_0\in \N$ and ${\bf
p}=\{p_1,\ldots,p_{2^d+d}\}\subset \N$ such that for any $\rho
\subset [2^d+d]$, $T^{p_0+{\bf p\cdot\rho}}x\in U_\rho$.

We show that the finite IP set $F$ generated by $\{p_1,p_2,\ldots,p_d\}$
belongs to $\Ind(U_1,U_2)$. For any $s \in
\{1,2\}^{2^d}=\{1,2\}^{[d]}$, since $\{x_{s(\ep)}:\ep\subset
[d]\}\in\{x_1,x_2\}^{[d]}$, there exists $\eta \subset [2^d]$
such that ${\bf x_\eta}=\{x_{s(\ep)}:\ep\subset [d]\}$, i.e. for any
$\ep\subset [d]$, $x_{\ep\eta}=x_{s(\ep)}$. Let
$y=T^{p_0+\sum_{i\in\eta}p_{i+d}}x$. Then, for any $\ep\subset [d]$,
$T^{\sum_{i\in\ep}p_i}y=T^{p_0+\bf p\cdot(\ep\eta)}x\in
U_{\ep\eta}=U_{s(\ep)}$. So
$$\bigcap\limits_{\ep\subset [d]}T^{-\sum_{i\in\ep}p_i}U_{s(\ep)}\neq\emptyset,$$
and $F$ belongs to $\Ind(U_1,U_2)$.

\medskip
Now assume that $(x_1,x_2)$ is an ${\rm Ind}_{fip}$-pair. That is,
for any neighborhood $U_1\times U_2$ of $(x_1,x_2)$, any $d\in \N$
and any $s\in \{1,2\}^{[d]}$, there are positive integers
$p_1,\ldots,p_d$ such that $\bigcap\limits_{\ep\subset
[d]}T^{-\sum_{i\in\ep}p_i}U_{s(\ep)}\neq\emptyset.$ Let $x\in
\bigcap\limits_{\ep\subset [d]}T^{-\sum_{i\in\ep}p_i}U_{s(\ep)}.$
Then $T^{\sum_{i\in\ep}p_i}x\in U_{s(\ep)}$ for any $\ep\subset
[d].$ This implies that $\{x_1,x_2\}^{[d]}\subset \Q^{[d]}$.
\end{proof}

The following lemma is a useful application of the previous lemma.

\begin{lem}\label{lem-Q}
Let $(X,T)$ be a transitive system, $x_1 \in X$ be a transitive point and
$d\geq 1$ be an integer. Suppose that $(x_2,x_1,\ldots,x_1)\in\Q^{[d]}$
for some $x_2\in X$ and that $\pi_1:A\rightarrow X$ is semi-open, where
$A=\overline{orb((x_1,x_2),T\times T)}$ and $\pi_1$ is the
projection to the first coordinate. Then $\{x_1,x_2\}^{[d]}\subset
\Q^{[d]}$.
\end{lem}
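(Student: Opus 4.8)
The statement to prove is: if $(X,T)$ is transitive, $x_1$ is a transitive point, $(x_2,x_1,\ldots,x_1)\in\Q^{[d]}$, and $\pi_1\colon A\to X$ is semi-open (where $A=\overline{orb((x_1,x_2),T\times T)}$), then $\{x_1,x_2\}^{[d]}\subset\Q^{[d]}$. In view of Lemma \ref{lem}, it is equivalent to show that $(x_1,x_2)$ is an ${\rm Ind}_{fip}$-pair, but I would instead argue directly with parallelepipeds and then quote Lemma \ref{lem} only at the very end if needed. The overall strategy is to build, from the single hypothesis $(x_2,x_1,\ldots,x_1)\in\Q^{[d]}$, every point of $\{x_1,x_2\}^{[d]}$ by iteratively ``filling in'' coordinates, using the face-group structure of $\Q^{[d]}$ together with the semi-openness of $\pi_1$ to transport a configuration living over $x_1$ to one living over $x_2$.

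First I would recall that $\Q^{[d]}$ is invariant under $\F^{[d]}$ and under Euclidean permutations, and that for a transitive system with transitive point $x_1$ one has $\Q^{[d]}=\overline{\O(x_1^{[d]},\G^{[d]})}$. The key observation is that $A=\overline{orb((x_1,x_2),T\times T)}$ is exactly the set of pairs $(a,b)$ with the property that $(b,a,\ldots,a)$-type configurations over $a$ can be pushed to configurations over $b$; semi-openness of $\pi_1$ means that for a nonempty open $U\subset A$, $\pi_1(U)$ has nonempty interior in $X$, so that generic points of $X$ (those in the orbit of the transitive point) are first coordinates of points of $A$ approximated by the orbit of $(x_1,x_2)$. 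I would use this to show: whenever ${\bf z}\in\Q^{[d]}$ has a coordinate equal to $x_1$, say $z_{\emptyset}=x_1$ after a Euclidean permutation, there is ${\bf z}'\in\Q^{[d]}$ obtained from ${\bf z}$ by changing that coordinate from $x_1$ to $x_2$ while keeping the others in $\{x_1,x_2\}$ — this is where $A$ and its semi-open projection enter, because one approximates $x_1$ by points in $\pi_1(A\cap(\text{small box}))$, lifts to $A$, and couples with the face-transformation action that realizes ${\bf z}$ as a limit of $S_n x_1^{[d]}$.

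Concretely, I would proceed by a double induction: an outer induction showing $\{x_1,x_2\}^{[d]}\subset\Q^{[d]}$ assuming $\{x_1,x_2\}^{[d-1]}\subset\Q^{[d-1]}$ together with a lower-dimensional version of the hypothesis (which follows by projecting, since $(x_2,x_1,\ldots,x_1)\in\Q^{[d]}$ yields $(x_2,x_1,\ldots,x_1)\in\Q^{[d-1]}$ by restricting to a face), and an inner step that, given a point of $\{x_1,x_2\}^{[d]}$ with $k$ coordinates equal to $x_2$ already known to lie in $\Q^{[d]}$, produces one with $k+1$ such coordinates. For the inner step: pick a coordinate $\ep_0$ currently equal to $x_1$; write the known point ${\bf z}\in\Q^{[d]}$ as a limit $S^{(n)}x_1^{[d]}$ with $S^{(n)}\in\G^{[d]}$ (transitivity); the coordinate $\ep_0$ being $x_1$ means $S^{(n)}_{\ep_0}x_1\to x_1$; now use that $(x_1,x_2)\in A$ and $\pi_1$ semi-open to find, near $x_1$, points $w_n$ with $(w_n,w_n')\in A$, $w_n\to x_1$, $w_n'\to x_2$, and $w_n$ in the orbit of $x_1$, then re-center the parallelepiped so that this coordinate is moved to $x_2$ while the others stay within $\{x_1,x_2\}$ up to the error controlled by closeness; passing to the limit keeps the point in $\Q^{[d]}$ and increases the count of $x_2$-coordinates by one. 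Iterating the inner step from the base configuration $(x_2,x_1,\dots,x_1)$ (which has one $x_2$) reaches $x_2^{[d]}$ and, along the way with all Euclidean permutations, every element of $\{x_1,x_2\}^{[d]}$.

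The main obstacle I expect is the inner step: making precise how semi-openness of $\pi_1$ lets one flip a single $x_1$-coordinate to $x_2$ \emph{without disturbing the other coordinates of the parallelepiped}. The delicate point is that the face-group element realizing ${\bf z}$ acts on all coordinates simultaneously, so ``flipping one coordinate'' must be implemented as composition with a suitable face transformation supported on a face through $\ep_0$, combined with the coupling data from $A$; one has to check that the coordinates one wants to keep in $\{x_1,x_2\}$ are indeed the ones preserved by that face transformation (those $\ep$ with $\ep_0\not\subset\ep$ in the subset picture) and that the remaining coordinates can be handled by the outer induction hypothesis $\{x_1,x_2\}^{[d-1]}\subset\Q^{[d-1]}$ applied to the two opposite faces. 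Getting the bookkeeping of which coordinates move to agree with the combinatorics of $\{x_1,x_2\}^{[d]}$ — i.e. that a product of such ``coordinate flips'' can reach an arbitrary target in $\{x_1,x_2\}^{[d]}$ — is the part that needs care, though it is ultimately a finite combinatorial argument on the hypercube $\{0,1\}^d$.
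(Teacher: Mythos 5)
Your induction skeleton is the same as the paper's (reduce by Euclidean permutations to flipping the $\emptyset$-coordinate from $x_1$ to $x_2$, and induct on the number of $x_2$-coordinates starting from $x_1^{[d]}$), but the flip itself --- the only step where the hypotheses $(x_2,x_1,\ldots,x_1)\in\Q^{[d]}$ and semi-openness of $\pi_1$ are actually used --- is exactly the step you flag as an obstacle and never carry out, and the mechanism you sketch for it would not deliver it. A face transformation ``through $\ep_0$'' moves every coordinate $\ep$ containing the chosen index, i.e.\ half the cube, not the single coordinate $\ep_0$; and controlling the disturbed half by the outer induction hypothesis $\{x_1,x_2\}^{[d-1]}\subset\Q^{[d-1]}$ on the two opposite faces cannot be reassembled into membership in $\Q^{[d]}$, since $\Q^{[d]}$ is in general much smaller than the set of points whose two $(d-1)$-faces lie in $\Q^{[d-1]}$. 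Likewise, ``re-center the parallelepiped so that this coordinate is moved to $x_2$ while the others stay within $\{x_1,x_2\}$ \ldots passing to the limit'' is a restatement of the claim, not an argument: your coupling points $(w_n,w_n')\in A$ are never actually combined with the configuration $S^{(n)}x_1^{[d]}$ approximating ${\bf z}$, and no recipe is given for a group element or limit procedure that changes one coordinate while fixing the rest (which is impossible inside $\G^{[d]}$ except in degenerate cases).

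The missing idea is a concatenation of \emph{two} approximate configurations with \emph{added} exponents, glued by a return time of $(x_1,x_2)$ into a set produced by semi-openness. Concretely, to prove the claim that $(x_1,{\bf a}_*)\in\{x_1,x_2\}^{[d]}\cap\Q^{[d]}$ implies $(x_2,{\bf a}_*)\in\Q^{[d]}$: given neighborhoods $U_1,U_2$ and the pattern $s$ with $s(\emptyset)=1$, replace $U_1$ at the flipped coordinate by $V_1={\rm int}\,\pi_1\bigl((U_1\times U_2)\cap A\bigr)$, which is nonempty by semi-openness and contained in $U_1$ (set $V_2=U_2$); since $(x_1,{\bf a}_*)\in\Q^{[d]}$ there is ${\bf n}$ with $W_1=\bigcap_{\ep\subset[d]}T^{-{\bf n}\cdot\ep}V_{s(\ep)}\neq\emptyset$, and $W_1\subset V_1$ forces $(W_1\times U_2)\cap A\neq\emptyset$, hence some $M$ with $(T^Mx_1,T^Mx_2)\in W_1\times U_2$. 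Now apply the hypothesis $(x_2,x_1,\ldots,x_1)\in\Q^{[d]}$ to the neighborhoods $T^{-M}U_2$ of $x_2$ and $T^{-M}W_1$ of $x_1$, taking the witnessing point on the orbit of the transitive point $x_1$; this yields $m_0,{\bf m}$ such that $y=T^{m_0+M}x_1\in U_2$ and $T^{{\bf m}\cdot\eta}y\in W_1$ for all $\eta\neq\emptyset$, whence $T^{({\bf n}+{\bf m})\cdot\eta}y\in V_{s(\eta)}\subset U_{s(\eta)}$. So with ${\bf p}={\bf n}+{\bf m}$ the target pattern $(x_2,{\bf a}_*)$ is realized, proving the claim; the induction on the number of $x_2$'s (with $d$ fixed --- no outer induction on $d$ is needed) then finishes the lemma. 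This additive trick ${\bf p}={\bf n}+{\bf m}$, together with the precise use of $A$ through $V_1$ and of transitivity to put the witness on the orbit of $x_1$, is the content your sketch is missing.
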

\begin{proof} If $X$ is finite, then the lemma holds. Thus we assume
that $X$ is infinite. We first prove the following claim.

\medskip

\noindent {\bf Claim:} {\em If ${\bf x}=(x_1,{\bf
a}_*)\in\{x_1,x_2\}^{[d]} \cap\Q^{[d]}$, then $(x_2,{\bf
a}_*)\in\Q^{[d]}$.}

\medskip

Let $U_1$ and $U_2$ be neighborhoods of $x_1$ and $x_2$
respectively.  Since $\pi_1$ is semi-open and $X$ is infinite, then
$V_1=int(\pi_1(U_1\times U_2\cap A))\neq\emptyset$ and infinite. Set
$V_2=U_2$.


Let $s=(s(\ep) : \ep\subset[d] )\in\{1,2\}^{[d]}$
such that ${\bf x}=(x_{s(\ep)}:\ep\subset[d])$. From the hypothesis, ${\bf
x}=(x_1,{\bf a}_*)\in\{x_1,x_2\}^{[d]} \cap\Q^{[d]}$, then there exist
positive integers $n_0,n_1,\ldots,n_d$ such that $T^{n_0+{\bf
n}\cdot\ep}x_1\in V_{s(\ep)}$ for each $\ep\subset [d]$, i.e.
$\bigcap_{\ep\subset[d]}T^{-{\bf
n}\cdot\ep}V_{s(\ep)}\neq\emptyset$, where ${\bf
n}=(n_1,\ldots,n_d)$.

Let $W_1=\bigcap_{\ep\subset[d]}T^{-{\bf
n}\cdot\ep}V_{s(\ep)}\subset V_1$  and $W_2=V_2=U_2$. Since $W_1\times
W_2\cap A\neq\emptyset$, there exists a positive integer $M$ such
that $(T\times T)^{M}(x_1,x_2) \in W_1\times W_2$. And, since
$(x_2,x_1,\ldots,x_1)\in\Q^{[d]}$, there exist positive integers
$m_0,m_1,\ldots,m_d$ such that $T^{m_0}x_1\in
T^{-M}W_2=T^{-M}U_2$, and $T^{m_0+{\bf m}\cdot\eta}x_1\in
T^{-M}W_1$ for all $\eta\subset[d]\setminus\{\emptyset\}$, where
${\bf m}=(m_1,\ldots,m_d)$. It follows that
$T^{-m_0-M}U_2\cap\bigcap_{\eta\subset[d]\setminus\{\emptyset\}}T^{-m_0-M-{\bf
m}\cdot\eta}W_1 \neq\emptyset.$ That is,

$$U_2\cap\bigcap_{\eta\subset[d]\setminus\{\emptyset\}}T^{-{\bf m}\cdot\eta}\bigcap_{\ep\subset[d]}
T^{-{\bf n}\cdot\ep}V_{s(\ep)} \neq\emptyset.$$

Thus,
$$U_2\cap\bigcap_{\eta\subset[d]\setminus\{\emptyset\}}T^{-{\bf m}\cdot\eta}\bigcap_{\ep\subset[d]}
T^{-{\bf n}\cdot\ep}U_{s(\ep)} \neq\emptyset.$$ Set $p_i=n_i+m_i$
for $1\leq i\leq d$, and ${\bf p}=(p_1,\ldots,p_d)$. Then we have
$$U_2\cap \bigcap_{\eta\subset[d]\setminus\{\emptyset\}}T^{-{\bf p}\cdot\eta}U_{s(\eta)}\neq\emptyset,$$
and so we get that $(x_2,{\bf a}_*)\in\Q^{[d]}$. The proof of the
claim is completed.

\medskip

For any ${\bf x}\in\{x_1,x_2\}^{[d]}$, let $l({\bf x})$ be the
number of $x_2$'s appearing in ${\bf x}$. We prove this lemma by
induction on $l({\bf x})$.

If $l({\bf x})=0$, then obviously ${\bf x}=x_1^{[d]}\in\Q^{[d]}$.
Suppose the lemma holds when $l({\bf x})\leq k$, i.e.
${\bf x}\in\Q^{[d]}$ if $l({\bf x})\leq k$. Now, for $l({\bf x})=k+1$
without loss of generality we write ${\bf x}=(x_2,{\bf a}_*)$. Since
$l((x_1,{\bf a}_*))=k$, we have by hypothesis that $(x_1,{\bf a}_*)\in\Q^{[d]}$.
Thus from the claim we get that ${\bf x}=(x_2,{\bf a}_*)\in\Q^{[d]}$.
The proof of this lemma is completed.
\end{proof}

The following corollary extends Corollaries 4.2. and 4.3. from \cite{HKM}, and the
comment right after, that were only proved in the
distal case.

\begin{cor}\label{cor-Q}
Let $(X,T)$ be a minimal t.d.s., $x_1,x_2\in X$ and $d\geq 1$ an integer. If
$(x_1,x_2)\in \RP^{[d]}$ and $(x_1,x_2)$ is a $T\times T$-minimal
point, then $\{x_1,x_2\}^{[d+1]}\subset \Q^{[d+1]}$.
\end{cor}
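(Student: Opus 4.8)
The plan is to obtain this statement as a direct application of Lemma \ref{lem-Q} with $d+1$ in place of $d$, so the work consists of checking that its three hypotheses hold for the pair $(x_1,x_2)$. First, since $(X,T)$ is minimal, every point is transitive, so $x_1$ is a transitive point. Second, because $\RP^{[d]}$ is an equivalence relation (Theorem \ref{thm-1}(2)), from $(x_1,x_2)\in\RP^{[d]}$ I get $(x_2,x_1)\in\RP^{[d]}$, and then Theorem \ref{thm-1}(1) gives $(x_2,x_1,\ldots,x_1)\in\Q^{[d+1]}$, which is exactly the parallelepiped hypothesis of Lemma \ref{lem-Q}.

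The remaining hypothesis is that the first-coordinate projection $\pi_1\colon A\to X$ is semi-open, where $A=\overline{orb((x_1,x_2),T\times T)}$, and this is precisely where I would use that $(x_1,x_2)$ is a $T\times T$-minimal point: it says exactly that $A$ is a minimal subsystem of $(X\times X,T\times T)$. Then $\pi_1\colon A\to X$ is a factor map between minimal systems --- onto because $\pi_1(A)$ is a nonempty closed invariant subset of the minimal system $X$ --- and any factor map between minimal systems is semi-open. (If needed, I would include the standard short argument: given a nonempty open $U\subseteq A$, pick a nonempty open $V$ with $\overline V\subseteq U$; by minimality of $A$, finitely many translates $(T\times T)^{n_i}V$ cover $A$, hence the finitely many translates $T^{n_i}\pi_1(\overline V)$ cover $X$, so by the Baire category theorem the closed set $\pi_1(\overline V)$ has nonempty interior $W$, and $W\subseteq\pi_1(\overline V)\subseteq\pi_1(U)$.)

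Once these are in place, Lemma \ref{lem-Q} delivers $\{x_1,x_2\}^{[d+1]}\subset\Q^{[d+1]}$. I do not expect any serious obstacle: the corollary is an assembly of Theorem \ref{thm-1} and Lemma \ref{lem-Q}, and the only point needing attention is recognizing that the $T\times T$-minimality of $(x_1,x_2)$ is exactly what forces $A$ to be minimal and therefore $\pi_1$ to be semi-open. This is also why the statement is automatic in the distal setting of \cite{HKM}, where every point of $X\times X$ is minimal and so the extra hypothesis is vacuous.
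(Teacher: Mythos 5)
Your proposal is correct and follows essentially the same route as the paper: note that $T\times T$-minimality of $(x_1,x_2)$ makes $A$ minimal so that $\pi_1:A\to X$ is semi-open (a standard fact the paper invokes without the Baire-category details you supply), obtain $(x_2,x_1,\ldots,x_1)\in\Q^{[d+1]}$ from Theorem \ref{thm-1}, and apply Lemma \ref{lem-Q}.
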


\begin{proof}
Let $A=\overline{orb((x_1,x_2),T\times T)}$. Since $(A,T\times T)$ and
$(X,T)$ are minimal, the projection $\pi_1:A\rightarrow X$ is
semi-open. By Theorem \ref{thm-1},
$(x_2,x_1,\ldots,x_1)\in\Q^{[d+1]}$, so $\{x_1,x_2\}^{[d+1]}\subset
\Q^{[d+1]}$ by Lemma \ref{lem-Q}.
\end{proof}

By the above discussion, we get the following criterion to be a ${\rm
Ind}_{fip}$-pair.

\begin{cor}\label{inde}
Let $(X,T)$ be a minimal system and $(x_1,x_2)\in\RP^{[\infty]}
\setminus\Delta_X=\bigcap_{d\geq1}\RP^{[d]}\setminus\Delta_X$. If
$(x_1,x_2)$ is $T\times T$-minimal or the projection
$$\pi_1:\overline{orb((x_1,x_2),T\times T)}\rightarrow X$$ is
semi-open, then $(x_1,x_2)$ is an ${\rm Ind}_{fip}$-pair.
\end{cor}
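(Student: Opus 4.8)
The plan is to derive Corollary~\ref{inde} as a direct consequence of Lemma~\ref{lem}, Lemma~\ref{lem-Q} and Corollary~\ref{cor-Q}, by splitting into the two cases in the hypothesis. Suppose $(x_1,x_2)\in\RP^{[\infty]}\setminus\Delta_X$, so in particular $(x_1,x_2)\in\RP^{[d]}$ for every $d\ge 1$. First I would invoke Lemma~\ref{lem}: since $(X,T)$ is minimal (hence transitive) and $x_1\neq x_2$, it suffices to show that $\{x_1,x_2\}^{[d]}\subset\Q^{[d]}$ for every integer $d\ge 1$. This reduces the corollary to a statement about dynamical parallelepipeds.

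For the case where the projection $\pi_1:\overline{orb((x_1,x_2),T\times T)}\rightarrow X$ is semi-open, the argument is short: fix $d\ge 2$. Since $(x_1,x_2)\in\RP^{[d-1]}$, Theorem~\ref{thm-1}(1) gives $(x_2,x_1,\ldots,x_1)\in\Q^{[d]}$. Now apply Lemma~\ref{lem-Q} with $A=\overline{orb((x_1,x_2),T\times T)}$ — noting that $x_1$ is a transitive point of $X$ by minimality and that $\pi_1$ is semi-open by hypothesis — to conclude $\{x_1,x_2\}^{[d]}\subset\Q^{[d]}$. (The case $d=1$ is trivial since $\{x_1,x_2\}^{[1]}=\{x_1,x_2\}^2\subset X\times X=\Q^{[1]}$ for a minimal system; indeed $(x_1,x_2)\in\RP^{[0]}$ always.) As $d$ was arbitrary, Lemma~\ref{lem} yields that $(x_1,x_2)$ is an ${\rm Ind}_{fip}$-pair.

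For the case where $(x_1,x_2)$ is $T\times T$-minimal, I would apply Corollary~\ref{cor-Q} directly: for each integer $d\ge 1$, since $(x_1,x_2)\in\RP^{[d]}$ and $(x_1,x_2)$ is a $T\times T$-minimal point, Corollary~\ref{cor-Q} gives $\{x_1,x_2\}^{[d+1]}\subset\Q^{[d+1]}$. Letting $d$ range over all of $\N$ shows $\{x_1,x_2\}^{[e]}\subset\Q^{[e]}$ for every $e\ge 2$, and again the case $e=1$ is automatic. So Lemma~\ref{lem} applies and $(x_1,x_2)$ is an ${\rm Ind}_{fip}$-pair.

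There is essentially no obstacle here: the corollary is a packaging of the three preceding results, and the only minor point requiring care is the bookkeeping with the index shift (a pair in $\RP^{[d]}$ gives a parallelepiped condition one dimension higher) and checking that the trivial low-dimensional cases are covered, both of which are immediate for minimal systems. One should also verify that in the $T\times T$-minimal case the hypothesis of Corollary~\ref{cor-Q} is met for \emph{all} $d$ simultaneously, which it is, since $T\times T$-minimality of the single pair $(x_1,x_2)$ does not depend on $d$.
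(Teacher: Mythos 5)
Your proposal is correct and follows essentially the same route as the paper: reduce via Lemma \ref{lem} to showing $\{x_1,x_2\}^{[d]}\subset\Q^{[d]}$ for all $d$, then use Corollary \ref{cor-Q} in the $T\times T$-minimal case and Theorem \ref{thm-1}(1) together with Lemma \ref{lem-Q} in the semi-open case. The extra bookkeeping you include (index shift, symmetry of $\RP^{[d]}$, the trivial $d=1$ case) is harmless and consistent with the paper's argument.
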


\begin{proof}
If $(x_1,x_2)$ is $T\times T$-minimal, by Corollary \ref{cor-Q}, we
have $\{x_1,x_2\}^{[d]}\subset\Q^{[d]}$ for every integer $d\geq1$.
Now, if the projection $\pi_1:\overline{orb((x_1,x_2),T\times T)}\rightarrow X$
is semi-open, then, since $(x_1,x_2)\in\RP^{[\infty]}$, we have
$(x_2,x_1,\ldots,x_1)\in\Q^{[d]}$ for every $d\geq1$. Hence, by Lemma \ref{lem-Q},
we also have $\{x_1,x_2\}^{[d]}\subset\Q^{[d]}$ for
every integer $d\geq1$.

By Lemma \ref{lem}, the proof is completed.
\end{proof}

\subsection{The structure of minimal systems without nontrivial
${\rm Ind}_{fip}$-pairs}

The following is the main result of this section.

\begin{thm}\label{prop11}
Let $(X, T)$ be a minimal system. If $X$ does not contain any
nontrivial ${\rm Ind}_{fip}$-pair, then it is an almost one-to-one
extension of its maximal $\infty$-step nilfactor.
\end{thm}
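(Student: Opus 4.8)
Let $\pi:(X,T)\to(X_\infty,T)$ be the factor map onto the maximal $\infty$-step nilfactor $X_\infty=X/\RP^{[\infty]}$. I want to show the set $X_0=\{x\in X:\pi^{-1}(\pi(x))=\{x\}\}$ is dense (it is always $G_\delta$). Suppose not; then, since $X_\infty$ is minimal and distal (being an inverse limit of minimal nilsystems, hence distal) and $\pi$ is a factor map between minimal systems, a standard argument shows that if $\pi$ is not almost one-to-one then there is a point $y\in X_\infty$ whose fiber $\pi^{-1}(y)$ contains two distinct points, and moreover — because the base is distal — the fiber over \emph{every} point contains two distinct points that can be taken $T\times T$-minimal inside $R_\pi$. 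The plan is to extract from this a nontrivial $T\times T$-minimal pair $(x_1,x_2)\in R_\pi$. Since $R_\pi=(\pi\times\pi)^{-1}\Delta_{X_\infty}$ and $\pi$ identifies exactly the classes of $\RP^{[\infty]}$, we have $R_\pi=\RP^{[\infty]}(X,T)$, so $(x_1,x_2)\in\RP^{[\infty]}\setminus\Delta_X$.

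First I would make precise the reduction to a $T\times T$-minimal pair in $R_\pi$. Take any $x_1\in X$ which is a minimal point for $T$ (all points are, since $X$ is minimal) with $\abs{\pi^{-1}(\pi(x_1))}\ge 2$; pick $x_2'\neq x_1$ in that fiber, and then let $(x_1,x_2)$ be a $T\times T$-minimal point in $\ov{orb((x_1,x_2'),T\times T)}$ whose first coordinate is $x_1$ — such a point exists because $x_1$ is $T$-minimal and projection of a minimal set onto a minimal factor is onto, so some minimal subset of $\ov{orb((x_1,x_2'),T\times T)}$ projects onto the orbit closure of $x_1$, i.e. onto $X$; picking a point of that minimal set over $x_1$ gives $(x_1,x_2)$. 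Since $\RP^{[\infty]}$ is closed and $T\times T$-invariant and contains $(x_1,x_2')$, it contains the whole orbit closure, hence $(x_1,x_2)\in\RP^{[\infty]}$. If $x_1=x_2$ we would need to argue the original fiber issue forces $x_1\neq x_2$; this is where distality of the base $X_\infty$ is used — a distal minimal factor map that is not almost one-to-one has all fibers of size $\ge 2$ uniformly, so one can in fact arrange $x_1\neq x_2$ (alternatively, if $X_\infty$ were trivial the theorem is vacuous and $X$ itself is the factor).

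Once I have $(x_1,x_2)\in\RP^{[\infty]}\setminus\Delta_X$ that is $T\times T$-minimal, Corollary \ref{inde} applies directly and yields that $(x_1,x_2)$ is an ${\rm Ind}_{fip}$-pair. This contradicts the hypothesis that $X$ has no nontrivial ${\rm Ind}_{fip}$-pair. Therefore $\pi$ must be almost one-to-one, i.e. $X_0$ is dense (and $G_\delta$), which is exactly the assertion.

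The main obstacle is the structural step in the middle: turning ``$\pi$ is not almost one-to-one'' into the existence of a \emph{$T\times T$-minimal} nontrivial pair in $R_\pi$. The cleanest route is to invoke that $X_\infty$ is distal, so $\pi$ factors as a distal extension followed by nothing, and for distal minimal factor maps non-injectivity of $\pi$ on a residual set fails exactly when there is a nontrivial minimal pair in $R_\pi$; concretely, pick any $x_1\in X$, any $x_2'$ in its fiber distinct from it, and use minimality of $(X,T)$ together with the fact that proximality cannot occur in $R_\pi$ when the base is distal — wait, that is not quite available since $X$ itself need not be distal. The correct formulation: it is classical (Auslander, Veech) that if $\pi$ is \emph{not} almost one-to-one and the base is minimal, then $R_\pi\setminus\Delta_X$ contains a minimal point of $(X\times X,T\times T)$; this minimality is automatic because minimal sets always exist in the closed invariant set $R_\pi\setminus\Delta_X$ (when it is nonempty and ``thick'' enough — here one uses that the set of $x$ with $\abs{\pi^{-1}(\pi(x))}\ge 2$ is closed and invariant, hence all of $X$ by minimality, so $R_\pi\setminus\Delta_X$ projects onto $X$). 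I would spell out this last point carefully and then simply quote Corollary \ref{inde}.
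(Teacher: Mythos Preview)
Your approach has a genuine gap at the structural step, and the gap is not a matter of missing details: the step is actually false in this setting. You want to produce, from the failure of almost one-to-one, a nontrivial $T\times T$-\emph{minimal} pair $(x_1,x_2)\in R_\pi=\RP^{[\infty]}$. But observe what Corollary~\ref{inde} together with the hypothesis already gives: if $(x_1,x_2)\in R_\pi$ is $T\times T$-minimal then it is an ${\rm Ind}_{fip}$-pair, hence $x_1=x_2$. In other words, $R_\pi$ contains \emph{no} nontrivial minimal points; equivalently, $\pi$ is a proximal extension. Your construction (pass to a minimal subset of $\ov{orb((x_1,x_2'),T\times T)}$ and pick a point over $x_1$) therefore necessarily lands on the diagonal. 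The subsidiary claims you invoke do not rescue this: the set $\{x:|\pi^{-1}(\pi(x))|\ge 2\}$ is invariant but not closed in general (it is the complement of a $G_\delta$), and for minimal metric systems a proximal extension need not be almost one-to-one, so ``$\pi$ not almost one-to-one $\Rightarrow$ $R_\pi\setminus\Delta_X$ contains a minimal point'' fails.

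The paper's proof starts exactly where your argument stalls. Having established that $\pi$ is proximal (by the observation above, using a minimal idempotent to pass from arbitrary pairs to minimal ones), it lifts $\pi$ via the shadow diagram to an \emph{open} proximal extension $\pi^*:X^*\to Y^*$, with almost one-to-one maps $\sigma:X^*\to X$ and $\tau:Y^*\to Y$. The key external input is van der Woude's theorem (Theorem~\ref{W}): an open proximal extension of minimal metric systems is weakly mixing, so $(R_{\pi^*},T\times T)$ has a transitive point $(x_1,x_2)$. Openness of $\pi^*$ makes the first-coordinate projection $R_{\pi^*}\to X^*$ open, and now the \emph{semi-open} branch of Corollary~\ref{inde} (not the minimal-pair branch) yields that $(x_1,x_2)$ is an ${\rm Ind}_{fip}$-pair in $X^*$. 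Pushing down by $\sigma$ gives an ${\rm Ind}_{fip}$-pair in $X$, forcing $\sigma(x_1)=\sigma(x_2)$; since $(x_1,x_2)$ was transitive this gives $R_{\pi^*}\subset R_\sigma$, and combining with $\tau$ almost one-to-one one unwinds to $\pi$ almost one-to-one. The moral: under the hypothesis, minimal pairs in $R_\pi$ are useless (they are all diagonal); one must instead manufacture a transitive pair with a semi-open projection, and this requires the shadow diagram plus Theorem~\ref{W}.
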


To prove this theorem we need some preparation. Every extension of
minimal systems can be lifted to an open extension by almost
one-to-one modifications. To be precise, for every extension
$\pi:(X,T) \rightarrow (Y,T)$ between minimal systems there exists a
canonically defined commutative diagram of extensions (called the
{\it shadow diagram})

\[
\begin{CD}
X^* @>{\sigma}>> X\\
@V{\pi^*}VV      @VV{\pi}V\\
Y^* @>{\tau}>> Y
\end{CD}
\]

\medskip

with the following properties:
\begin{enumerate}
  \item $\sigma$ and $\tau$ are almost one-to-one extensions;
  \item $\pi^*$ is an open extension, i.e. for any open set $U\subset X^*$,
  $\pi^*(U)$ is an open set of $Y^*$;
  \item $X^*$ is the unique minimal set in $R_{\pi\tau}=\{(x,y)\in
  X\times Y^*:\pi(x)=\tau(y)\}$ and $\sigma$ and $\pi^*$ are the
  restrictions to $X^*$ of the projections of $X\times Y^*$ onto
  $X$ and $Y^*$ respectively.
\end{enumerate}

We refer to \cite{Au88, G76, V70, V77} for the details of this
construction.

\medskip

In \cite{G76} it was shown that, a metric minimal system $(X,T)$ with
the property that $n$-proximal tuples are dense in $X^n$ for every
$n \ge 2$, is weakly mixing. This was extended by van der Woude
\cite{W} as follows (see also \cite{G05}).

\begin{thm}\label{W}
Let $\pi:(X,T) \rightarrow (Y,T) $ be a factor map between the metric minimal systems
$(X,T)$ and $(Y,T)$. Suppose that $\pi$ is an open proximal extension,
then $\pi$ is a weakly mixing extension, i.e. $(R_\pi, T\times T)$ is
transitive.
\end{thm}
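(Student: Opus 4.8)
The statement to prove is that for an open proximal extension $\pi:(X,T)\to(Y,T)$ of minimal systems, $(R_\pi,T\times T)$ is transitive; equivalently, since this is exactly what ``weakly mixing extension'' means, that an open proximal extension of minimal systems is a weakly mixing extension. The plan is to argue by contradiction, using the relative structure theory of open extensions together with the elementary remark that a distal extension admits no nontrivial proximal pair inside a single fibre.

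So I would suppose $\pi$ is \emph{not} a weakly mixing extension and derive a contradiction. The tool here is the relativized Furstenberg/Veech structure theory for open extensions of minimal metric systems (see \cite{G05, V70, V77, W, Au88}): a non–weakly-mixing open extension $\pi:(X,T)\to(Y,T)$ must admit a nontrivial isometric (equicontinuous) subextension, i.e. there are a minimal system $(Z,T)$ and factor maps $q:(X,T)\to(Z,T)$ and $p:(Z,T)\to(Y,T)$ with $\pi=p\circ q$ and $p$ a nontrivial isometric extension. Concretely one builds the relative maximal equicontinuous extension $X\to X^{\pi}_{\mathrm{eq}}\to Y$ and shows $\pi$ is weakly mixing if and only if $X^{\pi}_{\mathrm{eq}}=Y$. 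This is precisely the step where openness of $\pi$ is essential (it is what makes the relative enveloping-semigroup / RIM machinery work), and I expect it to be the main obstacle: it is the genuinely structural, nonelementary content of van der Woude's theorem, and in a self-contained write-up one would either develop or quote this relative structure theory in full.

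Granting the nontrivial isometric subextension $p:(Z,T)\to(Y,T)$, the remainder is short. An isometric extension is a distal extension, so $R_p\cap P(Z,T)=\Delta_Z$; in particular $R_p$ contains no nontrivial proximal pair. On the other hand $q$ is onto, so $(q\times q)(R_\pi)=R_p$ (a pair over $Y$ lifts to a pair over $Y$ in $X$), and $q\times q$ carries proximal pairs to proximal pairs by uniform continuity of $q$. Since $\pi=p\circ q$ is a proximal extension we have $R_\pi\subseteq P(X,T)$, hence $R_p=(q\times q)(R_\pi)\subseteq P(Z,T)$. Combining, $R_p\subseteq R_p\cap P(Z,T)=\Delta_Z$, so $p$ is an isomorphism, contradicting its nontriviality. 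Therefore $\pi$ is a weakly mixing extension and $(R_\pi,T\times T)$ is transitive.

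As an alternative I would note a more hands-on route: reduce transitivity of $(R_\pi,T\times T)$ (a compact metric system) by Baire category to showing that for basic nonempty boxes $O=(U_1\times U_2)\cap R_\pi$ and $O'=(V_1\times V_2)\cap R_\pi$ there is $n$ with $(T\times T)^{-n}O\cap O'\neq\emptyset$; use openness of $\pi$ to see that the projection $R_\pi\to Y$ is open, transfer the return-time problem along this open map to the minimal system $(Y,T)$ (where return times are syndetic), and then, inside the fibre over a suitable return point, use the enveloping semigroup and $R_\pi\subseteq P(X,T)$ to move a whole tuple of fibre-coordinates together into the desired box. The delicate point in this approach is exactly the simultaneous collapsing of the fibre coordinates by a single minimal idempotent, which is again the crux of the argument.
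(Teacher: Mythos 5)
The paper does not actually prove this statement: it is quoted as van der Woude's theorem from \cite{W} (see also \cite{G05}), so the only question is whether your argument is a complete proof on its own. It is not, because the entire difficulty has been pushed into the structural claim that a non--weakly-mixing \emph{open} extension of metric minimal systems admits a nontrivial isometric subextension (equivalently, that for open $\pi$ weak mixing is detected by a relative maximal equicontinuous subextension $X\to X^{\pi}_{\mathrm{eq}}\to Y$). That is not the standard dichotomy of the Ellis--Glasner--Shapiro/Veech relative structure theory: that dichotomy requires the extension to be RIC, or at least to satisfy the Bronstein condition that almost periodic points be dense in $R_\pi$, not merely that $\pi$ be open. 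A nontrivial proximal extension never satisfies this condition: an almost periodic point of $(X\times X,T\times T)$ which is a proximal pair lies on the diagonal, so if $R_\pi\subseteq P(X,T)$ and $R_\pi\neq\Delta_X$, the almost periodic points of $R_\pi$ are exactly the diagonal and are not dense. Hence the off-the-shelf structure theorem does not apply to the extensions covered by the statement, and the ``open'' version you invoke is precisely the nonstandard ingredient you would have to prove.

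Worse, in the only case you need it --- $\pi$ proximal --- your own (correct) easy computation shows that $\pi$ has no nontrivial isometric subextension at all: writing $\pi=p\circ q$ with $q:X\to Z$, $p:Z\to Y$ and $p$ isometric, surjectivity of $q$ gives $(q\times q)(R_\pi)=R_p\subseteq P(Z,T)$, whence $R_p=\Delta_Z$. So for proximal $\pi$ the assertion ``open and not weakly mixing implies a nontrivial isometric subextension'' is logically equivalent to the theorem being proved, and granting it amounts to assuming the conclusion. The same criticism applies to your alternative sketch, where you yourself flag the simultaneous collapsing of the fibre coordinates as the unresolved crux. A genuine proof must use openness substantively (for instance via the continuity of $y\mapsto\pi^{-1}(y)$ and a category/transitivity argument on $R_\pi$, as in van der Woude's dissertation or Glasner's treatment in \cite{G05}); that step is exactly what is missing here.
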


Now we give the proof of Theorem \ref{prop11}.

\medskip

\noindent {\bf Proof of Theorem \ref{prop11}.}\quad
Let $(X, T)$ be a minimal system without ${\rm Ind}_{fip}$-pairs.
We denote by $(Y, T)$ the quotient system of $(X,T)$ determined by the
equivalence relation $\RP^{[\infty]}(X,T)$ and let $\pi:(X,T)\rightarrow (Y,T)$
be the canonical projection map.

We first prove that $\pi$ is a proximal extension. Remark that if
$(x,y)\in R_{\pi}=\RP^{[\infty]}$ is a $T\times T$ minimal point,
according to Corollary \ref{inde}, we have $(x,y)$ is an ${\rm
Ind}_{fip}$-pair and thus we must have $x=y$. Now consider any
$(x,y)\in R_{\pi}$ and $u\in E(X,T)$ a minimal idempotent. Since
$(ux,uy)$ is a $T\times T$ minimal point, we have from previous
observation that $ux=uy$, which implies $(x,y)$ is a proximal pair.

As the shadow diagram shows, there exists a canonically defined
commutative diagram of extensions

\[
\begin{CD}
X^* @>{\sigma}>> X\\
@V{\pi^*}VV      @VV{\pi}V\\
Y^* @>{\tau}>> Y
\end{CD}
\]

\medskip

\noindent verifying properties (1)-(3) above.

Since $\pi$ is proximal and $\sigma$ is almost one-to-one, we have
$\pi\circ\sigma$ is proximal. For any $(x,x')\in R_{\pi^*}$,
$\pi\circ\sigma(x)=\pi\circ\sigma(x')$, so $(x,x')$ is a proximal
pair, which implies $\pi^*$ is proximal too. By Theorem \ref{W},
$\pi^*$ is a weakly mixing extension, and hence there exists
$(x_1,x_2)\in R_{\pi^*}$ such that
$R_{\pi^*}=\overline{orb((x_1,x_2),T\times T)}$. Let $\pi_1$ be the
projection of $R_{\pi^*}$ to the first coordinate. It is easy to get
that for any open sets $U, V\subset X^*$, $\pi^*(U\times
V)=\pi^*(U)\cap\pi^*(V)$ is an open set. So we get that $\pi_1$ is
an open map too. Since $(x_1,x_2)$ is a proximal pair, we have
$(x_1,x_2)$ is an ${\rm Ind}_{fip}$-pair by Corollary \ref{inde}.
Hence $(\sigma(x_1),\sigma(x_2))$ is an ${\rm Ind}_{fip}$-pair
too. Then we must have $\sigma(x_1)=\sigma(x_2)$, and thus
$R_{\pi^*}\subset R_\sigma$.

Since $\tau$ is almost one-to-one, we can choose $y\in Y$ such
that $\tau^{-1}(y)$ contains only one point. Suppose $x_1,x_2\in\pi^{-1}(y)$,
then there exist $x_1^*,x_2^*\in X^*$ such that $\sigma(x_1^*)=x_1$
and $\sigma(x_2^*)=x_2$. As $\tau\circ\pi^*(x_1^*)=\pi\circ\sigma(x_1^*)=y=
\pi\circ\sigma(x_2^*)=\tau\circ\pi^*(x_2^*)$, we have
$\pi^*(x_1^*),\pi^*(x_2^*)\in\tau^{-1}(y)$ and so $\pi^*(x_1^*)=\pi^*(x_2^*)$,
i.e. $(x_1^*,x_2^*)\in R_{\pi^*}\subset R_\sigma$. Hence $x_1=x_2$,
which implies that $\pi^{-1}(y)$ contains only one point too.
We conclude $\pi$ is almost one-to-one.
\hfill $\square$

\begin{prop}\label{distal} Let $(X,T)$ be a minimal distal system.
Then there are no nontrivial ${\rm Ind}_{fip}$-pairs if and only if
$(X,T)$ is an $\infty$-step nilsystem.
\end{prop}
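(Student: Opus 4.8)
The plan is to prove both implications, with the forward direction (no nontrivial $\mathrm{Ind}_{fip}$-pairs $\Rightarrow$ $\infty$-step nilsystem) being the substantive one and following quickly from Theorem \ref{prop11}, while the reverse direction (reproving that $\infty$-step nilsystems have no nontrivial $\mathrm{Ind}_{fip}$-pairs in the distal case) will require an argument that $\mathrm{Ind}_{fip}$-pairs are regionally proximal of every order.

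First I would handle the easy implication. Suppose $(X,T)$ is a minimal distal system with no nontrivial $\mathrm{Ind}_{fip}$-pairs. By Theorem \ref{prop11}, $(X,T)$ is an almost one-to-one extension $\pi\colon (X,T)\to (Y,T)$ of its maximal $\infty$-step nilfactor $(Y,T)=(X/\RP^{[\infty]},T)$. Since $(X,T)$ is distal, every almost one-to-one extension between minimal systems whose range is distal is in fact a conjugacy: the set $X_0=\{x: \pi^{-1}(\pi(x))=\{x\}\}$ is a dense $G_\delta$, and by distality a point $(x,x')\in R_\pi$ with $x\neq x'$ would be a distal pair, yet distal extensions that are also almost one-to-one are trivial. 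Concretely, in a distal minimal system every extension is open (even a group extension in the sense of the Furstenberg structure theorem), so an almost one-to-one extension must be one-to-one. Hence $\pi$ is a conjugacy and $(X,T)\cong (Y,T)$ is an $\infty$-step nilsystem.

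For the converse, suppose $(X,T)$ is a minimal distal $\infty$-step nilsystem, so $\RP^{[\infty]}=\bigcap_{d\ge 1}\RP^{[d]}=\Delta_X$. Let $(x_1,x_2)$ be an $\mathrm{Ind}_{fip}$-pair; I must show $x_1=x_2$. By Lemma \ref{lem}, $\{x_1,x_2\}^{[d]}\subset\Q^{[d]}$ for every $d\ge 1$; in particular $(x_1,x_2,\ldots,x_2)\in\Q^{[d]}$ for every $d$. Now, swapping the roles of the first and an adjacent coordinate via a Euclidean permutation of $\Q^{[d]}$, one also gets $(x_2,x_1,\ldots,x_1)\in\Q^{[d+1]}$ for every $d$; but by Theorem \ref{thm-1}(1), $(x_2,x_1,\ldots,x_1)\in\Q^{[d+1]}$ is exactly the statement $(x_1,x_2)\in\RP^{[d]}$ (using that in a minimal system $\Q^{[d+1]}$ is invariant under Euclidean permutations, so $(x_1,x_2,\ldots,x_2)\in\Q^{[d+1]}$ too, giving $(x_1,x_2)\in\RP^{[d]}$). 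As this holds for all $d\ge 1$, we get $(x_1,x_2)\in\RP^{[\infty]}=\Delta_X$, so $x_1=x_2$. Therefore $(X,T)$ has no nontrivial $\mathrm{Ind}_{fip}$-pairs.

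The main obstacle I anticipate is the forward direction's final step: justifying that an almost one-to-one extension onto a distal minimal system is necessarily a conjugacy. This is a standard fact (an almost one-to-one extension is proximal on the fibers only if the fibers are singletons, and distality forbids nontrivial proximality), but it must be invoked cleanly — the cleanest route is to note that $R_\pi\subset P(X,T)$ cannot hold nontrivially in a distal system, hence $R_\pi=\Delta_X$; alternatively one observes directly that the dense $G_\delta$ of one-point fibers, together with distality (which makes $\pi$ an open map), forces all fibers to be singletons. I would prefer stating that in a distal system almost one-to-one extensions are isomorphisms and citing the structure theory, keeping the proof short.
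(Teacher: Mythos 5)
Your proposal is correct and is essentially the argument the paper intends when it says Proposition \ref{distal} is a direct consequence of Theorem \ref{prop11}: the quotient onto the maximal $\infty$-step nilfactor is almost one-to-one, hence proximal between minimal systems, and distality of $X$ forces $R_\pi\subset P(X,T)=\Delta_X$, while the converse is exactly your Lemma \ref{lem} plus Theorem \ref{thm-1} computation showing every ${\rm Ind}_{fip}$-pair lies in $\RP^{[\infty]}=\Delta_X$ (note this direction needs no distality). One caution on wording: the auxiliary claim that an almost one-to-one extension ``whose range is distal'' is a conjugacy is false in general (the Sturmian system over its underlying rotation is a counterexample); what your proof actually uses, and what is correct, is distality of the total space $X$, which makes $\pi$ a distal, hence open, extension (equivalently gives $R_\pi\subset P(X,T)=\Delta_X$), and the Euclidean-permutation step in the converse is superfluous since $(x_1,x_2,\ldots,x_2)$ already lies in $\{x_1,x_2\}^{[d+1]}\subset\Q^{[d+1]}$.
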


\begin{proof}
It is a direct consequence of Theorem \ref{prop11}.
\end{proof}

\subsection{The assumption of semi-openness in Lemma \ref{lem-Q} cannot be removed}

In this subsection we give an example to show that the condition
of semi-openness in Lemma \ref{lem-Q} cannot be removed. First we recall some notions.

Let $(X,T)$ be t.d.s. For any open cover $\U$, let $N(\U)$ denote the
smallest possible cardinality among finite subcovers of $\U$.
Given an increasing sequence
$\A=\{t_1,t_2,\ldots\}$ in $\Z_+$, the sequence entropy of $(X,T)$ or just $T$
with respect to $\A$ and the cover $\U$ is
$$h_{\A}(T,\U)=\limsup\limits_{n\rightarrow\infty}\frac{1}{n}\log
N\left(\bigvee\limits_{i=1}^n T^{-t_i}\U\right)$$ and the sequence
entropy of $T$ with respect to $\A$ is
$h_{\A}(T)=\sup_{\U}h_{\A}(T,\U)$, where the supremum is taken over
all finite open covers $\U$ of $X$. The system $(X,T)$ is a {\em
null system} if for any sequence $\A\subset\Z_+$, $h_{\A}(T)=0$.

Similar to ${\rm Ind}_{fip}$-pair we can define IN-pairs. Let
$(X,T)$ be t.d.s. and $(x_1,x_2)\in X\times X$. Then, $(x_1,x_2)$ is
a {\em IN-pair} if for any neighborhoods $U_1$, $U_2$ of $x_1$,
$x_2$ respectively, $Ind(U_1,U_2)$ contains arbitrary long finite
independence sets. In \cite{KL} is shown that $(X,T)$ is a null
system if and only if it contains no nontrivial IN-pairs. It is
obvious that a null system contains no nontrivial
${\rm Ind}_{fip}$-pairs as ${\rm Ind}_{fip}$-pairs are IN-pairs.

\medskip

The following example is classical.

\begin{ex} Sturmian system.
\end{ex}
Let $\a$ be an irrational number in the interval $(0,1)$ and
$R_\a$ be the irrational rotation on the (complex) unit circle $\T$ generated
by $e^{2\pi i\a}$. Set
$$A_0=\left\{e^{2\pi i\theta}:0\leq \theta < (1-\a) \right\} \text{ and }
A_1=\left\{e^{2\pi i\theta}: (1-\a) \leq \theta < 1\right\}$$

Consider $z \in \T$ and define $x \in \{0,1\}^{\Z}$ by: for all
$n\in \Z$, $x_{n}=i$ if and only if $R_{\a}^n(z) \in A_{i}$. Let  $X
\subset \{0,1\}^{\Z}$ be the orbit closure of $x$ under the shift
map $\sigma$ on $\{0,1\}^{\Z}$, i.e. for any $y \in \{0,1\}^{\Z}$,
$(\sigma(y))_n=y_{n+1}$. This system is called Sturmian system. It
is well known that $(X,\sigma)$ is a minimal almost one-to-one
extension of $(\T,R_\a)$. Moreover, it is an asymptotic extension.
Also, it is not hard to prove that it is a null system.

Let $\pi: X \to \T$ be the former extension and consider
$(x_1,x_2)\in R_{\pi} \setminus \Delta_X$. Then $(x_1,x_2)$ is an
asymptotic pair and thus $(x_1,x_2)\in \RP^{[d]}$ for any integer
$d\geq 1$. In particular, $(x_{2},x_{1},\ldots,x_{1}) \in
\Q^{[d+1]}$ for any integer $d\geq 1$. Now, if
$\{x_1,x_2\}^{[d+1]}\subset \Q^{[d+1]}$ for any integer $d\geq
1$, then, by Lemma \ref{lem}, $(x_1,x_2)$ is a nontrivial ${\rm
Ind}_{fip}$-pair, and so an IN-pair, which contradicts the fact that
$(X,T)$ is a null system. Therefore, there exists an integer $d\geq
1$ such that $\{x_1,x_2\}^{[d]}\nsubseteq\Q^{[d]}$, which implies
the condition $\pi_1:\overline{orb((x,y),T\times T)}\rightarrow X$
is semi-open in Lemma \ref{lem-Q} cannot be removed.

\section{Minimal distal systems which are not $\infty$-step nilsystems}\label{section-example}

In the previous section we showed that a minimal distal system is an
$\infty$-step nilsystem if and only if it has no nontrivial ${\rm
Ind}_{fip}$-pairs. In this section we will give examples of minimal
distal systems which are not $\infty$-step nilsystems. We remark
that if $(X,T)$ is minimal distal and $\pi:(X,T) \lra (X_{eq},T)$ is the
maximal equicontinuous factor of $(X,T)$, then each
pair in $R_{\pi}\setminus \Delta_{X}$ is an untame pair (see \cite{HLSY,
H1, MS}). In fact, the result in previous section tells that if
$\pi_{\infty}:(X,T) \lra (Z_\infty,T)$ is the factor map from $X$ to its
maximal $\infty$-step nilfactor, then each pair in
$R_{\pi_\infty}\setminus \Delta_{X}$ is  an ${\rm Ind}_{fip}$-pair. We
do not know how to glue both results together.

\subsection{The existence}

To show the existence of minimal distal systems which are not
$\infty$-step nilsystems we use some abstract results from \cite{L}.
We use freely notations therein.

\begin{prop}\cite[Theorem 4.4]{L}\label{L1}
Every ergodic measurable distal system \hfill\break $(X,\mathcal{B},\mu,T)$
can be represented as a minimal topologically distal system equipped
with a Borel measure of full support.
\end{prop}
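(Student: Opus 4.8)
The plan is to deduce this from the Furstenberg--Zimmer structure theorem for measurable distal systems together with a transfinite construction of topological models. Since $(X,\mathcal B,\mu,T)$ is ergodic and measurably distal, there is a countable ordinal $\eta$ and an increasing transfinite family of factors $(X_\xi,\mathcal B_\xi,\mu_\xi,T)$, $0\le\xi\le\eta$, with $X_0$ the one-point system, $X_\eta=X$, each $(X_{\xi+1},T)$ an isometric (compact group) extension of $(X_\xi,T)$, and $(X_\lambda,T)=\invlim(X_\xi,T)_{\xi<\lambda}$ for limit $\lambda$; the ordinal is countable because $L^2(X,\mu)$ is separable. I would construct, by transfinite recursion on $\xi$, topological systems $(\widehat X_\xi,\widehat T_\xi)$ together with measurable isomorphisms $(\widehat X_\xi,\widehat\mu_\xi,\widehat T_\xi)\cong(X_\xi,\mu_\xi,T)$, such that each $\widehat X_\xi$ is minimal and topologically distal, each $\widehat\mu_\xi$ has full support, and the topological factor maps $\widehat X_{\xi'}\to\widehat X_\xi$ match the measurable ones compatibly along the tower. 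At $\xi=\eta$ this gives the desired model.

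The successor step is the heart of the matter. An isometric extension $X_{\xi+1}\to X_\xi$ is measurably isomorphic, over $X_\xi$, to a skew product $X_\xi\times M$ with compact metric fibre $M$ and transformation $(x,m)\mapsto(Tx,\sigma(x)m)$, where $\sigma\colon X_\xi\to K$ is a measurable cocycle into a compact group $K$ acting on $M$. Transporting $\sigma$ to the already-built model $\widehat X_\xi$, I would produce a topologically isometric extension of $\widehat X_\xi$ that is a model of $X_{\xi+1}\to X_\xi$; concretely this amounts to finding a \emph{continuous} cocycle $\sigma'$ measurably cohomologous to $\sigma$ over a minimal topologically distal model of $X_\xi$ (a relative Jewett--Krieger statement adapted to isometric extensions), and forming the topological skew product $\widehat X_\xi\times_{\sigma'}M$. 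This system is compact metric, is an isometric hence distal extension of the distal $\widehat X_\xi$, so it is topologically distal, and it carries the transported measure $\widehat\mu_{\xi+1}\cong\mu_{\xi+1}$. Finally I would replace it by $\mathrm{supp}(\widehat\mu_{\xi+1})$: because $\widehat\mu_{\xi+1}$ is ergodic and distal systems are pointwise almost periodic, this support equals the orbit closure of a $\widehat\mu_{\xi+1}$-generic point and is therefore minimal, and it still projects onto $\widehat X_\xi$ since $\widehat\mu_\xi$ has full support. This defines $(\widehat X_{\xi+1},\widehat T_{\xi+1})$ with all the required properties.

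At a limit ordinal $\lambda$ I would set $\widehat X_\lambda:=\invlim(\widehat X_\xi,\widehat T_\xi)_{\xi<\lambda}$. Since $\lambda$ is countable this is compact metrizable; an inverse limit of minimal systems is minimal (a nonempty closed invariant subset projects onto every $\widehat X_\xi$, hence equals the whole inverse limit by a compactness argument), and an inverse limit of topologically distal systems is topologically distal. The compatible measures $\widehat\mu_\xi$ determine an inverse limit measure $\widehat\mu_\lambda$, whose support is nonempty, closed and invariant, hence all of $\widehat X_\lambda$ by minimality; and $(\widehat X_\lambda,\widehat\mu_\lambda,\widehat T_\lambda)\cong(X_\lambda,\mu_\lambda,T)$ because $\mathcal B_\lambda=\bigvee_{\xi<\lambda}\mathcal B_\xi$ and the isomorphisms were chosen compatibly. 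This closes the recursion, and $\widehat X:=\widehat X_\eta$ with $\widehat\mu:=\widehat\mu_\eta$ is the asserted model.

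The main obstacle is the successor step, precisely the topological regularization of the measurable cocycle $\sigma$ into a continuous cocycle over a minimal topologically distal model of $X_\xi$. One cannot achieve this by the usual device of passing to an almost one-to-one extension of $\widehat X_\xi$, since almost one-to-one extensions of distal systems need not be distal (Sturmian-type examples); the realization must be carried out within the distal category, which is what makes the theorem delicate. A secondary difficulty, intertwined with the first, is organizing the transfinite recursion coherently so that the models and isomorphisms at different stages fit together, ensuring in particular that the inverse limits taken at limit ordinals are genuinely isomorphic as measure-preserving systems to the factors $X_\lambda$.
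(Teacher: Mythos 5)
There is a genuine gap, and you have in fact named it yourself. The statement you are asked to prove is exactly Lindenstrauss's theorem (the paper offers no proof; it cites \cite[Theorem 4.4]{L} as a black box), and the entire mathematical content of that theorem lives in the step you defer: given a minimal topologically distal model $(\widehat X_\xi,\widehat T_\xi)$ of $X_\xi$ with fully supported measure, produce a minimal topologically distal model of the measurable isometric extension $X_{\xi+1}\to X_\xi$ lying over it. Everything else in your outline (the Furstenberg--Zimmer tower with countable ordinal length, inverse limits at limit ordinals preserving minimality, distality and full support, minimality of the support of an ergodic measure in a distal system) is routine bookkeeping; the successor step is the theorem. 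Declaring it ``the main obstacle'' and describing what would suffice is a reduction, not a proof, so as it stands the proposal does not establish the proposition.

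Moreover, the particular reduction you propose is likely too rigid to be carried out as stated. You ask for a continuous cocycle $\sigma'$ measurably cohomologous to $\sigma$ over the \emph{already fixed} model $\widehat X_\xi$. Continuous realization of a measurable cocycle over a prescribed topological model is in general impossible: the cohomology class of $\sigma$ over a given strictly ergodic or minimal model need not contain any continuous representative, and known realization results (Jewett--Krieger and its relative versions) achieve continuity only at the price of changing the model, typically by almost one-to-one modifications --- which, as you correctly note, destroy distality. Lindenstrauss's argument does not regularize the cocycle over a fixed base in this naive sense; the construction of the distal model of the extension is done by a much more delicate direct construction within the distal category. So the honest verdict is that your plan reproduces the standard strategy at the level of the tower induction, but the heart of the proof --- the distal topological realization of an isometric extension --- is missing, and the specific mechanism you suggest for it (cocycle cohomology over the fixed base) would need to be replaced, not merely filled in. If your intention was to use the result as the paper does, the correct move is simply to cite \cite{L}; if you intend to prove it, the successor step must be supplied in full.
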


\begin{lem}\cite[Claim 5.5]{L}\label{L2}
Suppose $(Y, \mathcal{D}, \nu, T)$ is an isometric extension of the ergodic rotation
$(\T, \mathcal{B}, \lambda, R_\alpha)$ by a finite group
that is not a Kronecker system. Then $(Y, \mathcal{D},\nu, T)$
does not have a uniquely ergodic distal model.
\end{lem}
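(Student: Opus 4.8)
The plan is to argue by contradiction. Suppose $(Y,\mathcal D,\nu,T)$ admits a uniquely ergodic topologically distal topological model $(\hat Y,\hat T)$; replacing $\hat Y$ by the support of its unique invariant measure we may assume $(\hat Y,\hat T)$ is also minimal. The goal is to show that these hypotheses force $(\hat Y,\hat T)$ to be \emph{equicontinuous}: then $\hat Y$ is measure-theoretically a rotation on a compact abelian group with its Haar measure, hence has discrete spectrum, and being isomorphic to $(Y,\mathcal D,\nu,T)$ this contradicts the hypothesis that $Y$ is not a Kronecker system. \emph{Step 1 (the model is finite over its equicontinuous factor).} Let $\pi\colon\hat Y\to\hat Y_{eq}$ be the maximal equicontinuous factor. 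Being minimal equicontinuous, $\hat Y_{eq}$ is conjugate to a minimal rotation $x\mapsto x+\beta$ on a compact abelian (monothetic) group $G$, whose Haar measure $m$ is its unique invariant measure; in particular $(\hat Y_{eq},m,\hat T)$ has discrete spectrum. It is a classical fact about minimal distal systems that the maximal equicontinuous factor realizes the full point spectrum, i.e. $(\hat Y_{eq},m)$ is measure-theoretically isomorphic to the Kronecker factor $\mathcal Z(\hat Y)\cong\mathcal Z(Y)$. Since $Y$ is a finite isometric extension of $(\T,R_\alpha)$, one has $\T\preceq\mathcal Z(Y)\preceq Y$ with $Y\to\mathcal Z(Y)$ finite-to-one of some degree $m_0$, and $m_0\ge2$ precisely because $Y$ is not Kronecker. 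Hence $\hat Y$ is an $m_0$-to-one \emph{measurable} extension of $\hat Y_{eq}$.

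\emph{Step 2 (unique ergodicity makes it a finite group extension).} The extension $\pi$ is a minimal distal extension, hence open with all fibres of a common cardinality $c\in\N\cup\{\infty\}$. Since $\hat Y$ is minimal and uniquely ergodic its invariant measure has full support, so disintegrating it over $m$ gives, for $m$-a.e.\ $x$, a fibre measure supported on the whole topological fibre $\pi^{-1}(x)$; matching this against the isomorphism $\hat Y\cong Y$ forces each such fibre to have exactly $m_0$ points, so $c=m_0<\infty$. Thus $\pi$ is a topologically finite-to-one minimal distal extension, and by the structure theory of distal minimal systems it is equivalent to a finite group extension: $\hat Y$ is conjugate to $G\times(K/L)$ with $\hat T(x,z)=(x+\beta,\bar c(x)z)$ for a continuous cocycle $\bar c\colon G\to K$ into a finite group $K$.

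\emph{Step 3 (continuous finite extensions of rotations are equicontinuous).} Since $K$ is discrete and $\bar c$ is continuous, $\bar c$ is locally constant; its level sets, being clopen, are unions of connected components of $G$, so $\bar c$ factors through $G/G^{0}$ and hence through a finite quotient group $q\colon G\to G/H$ with $H$ open. As $q(x+j\beta)=q(x)+j\bar\beta$ in the finite group $G/H$, each iterated cocycle $\bar c_{n}(x)=\bar c(x)\bar c(x+\beta)\cdots\bar c(x+(n-1)\beta)$ also factors through the \emph{fixed} finite quotient $q$. Therefore $\{\bar c_{n}\}_{n\in\Z}$ is an equicontinuous family, and together with the isometry of the rotation on $G$ this makes $\{\hat T^{n}\}_{n\in\Z}$ equicontinuous on $\hat Y\cong G\times(K/L)$. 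So $\hat Y$ is equicontinuous, hence measurably a group rotation with discrete spectrum, contradicting that $(Y,\mathcal D,\nu,T)$ is not a Kronecker system; this proves the lemma.

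\emph{Where the difficulty lies.} The essential point is Step 2, specifically the passage from ``the disintegrated fibre measure is carried by $m_0$ points'' to ``the topological fibre has only $m_0$ points'': this is exactly where unique ergodicity (full support of the measure, and the disintegration being the geometric one) is indispensable, and it fails for general invariant measures. The finiteness of the fibre group is then used decisively in Step 3: for an infinite structure group a continuous cocycle need not be locally constant, and indeed infinite isometric extensions of rotations such as $(x,y)\mapsto(x+\alpha,y+x)$ on $\T^{2}$ do possess uniquely ergodic distal models.
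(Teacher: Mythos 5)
The paper does not actually prove this lemma --- it is quoted verbatim from Lindenstrauss \cite[Claim 5.5]{L} --- so your argument has to stand on its own, and as written it does not: the two places where the real content lies are asserted rather than proved. First, in Step 1 you invoke as ``a classical fact'' that for a minimal distal system the maximal equicontinuous factor realizes the full measurable point spectrum, i.e.\ that $\hat Y_{eq}$ with its Haar measure is the Kronecker factor of $(Y,\nu)$. This is not a classical fact, and it is exactly what is at stake: you need the measurable eigenvalue $e^{2\pi i\alpha}$ (and the finite part) of the \emph{abstract} uniquely ergodic distal model to be realized by a \emph{continuous} eigenfunction. Continuity of measurable eigenvalues is known to fail for general strictly ergodic minimal systems (Toeplitz systems are standard examples), and nothing in your argument uses distality to exclude this; without it you cannot even place the rotation by $\alpha$ under $\hat Y_{eq}$, so the claim that $\pi\colon \hat Y\to\hat Y_{eq}$ is measurably $m_0$-to-one is unsupported.

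Second, in Step 2 the inference ``the unique invariant measure has full support, hence for $m$-a.e.\ $x$ the conditional $\mu_x$ is supported on the whole topological fibre $\pi^{-1}(x)$'' is a non sequitur: full support of $\mu$ only says that the union over $x$ of ${\rm supp}(\mu_x)$ is dense, not that each conditional fills its fibre. For a Sturmian system over its maximal equicontinuous factor the unique invariant measure has full support while a.e.\ conditional is a single atom sitting inside a larger fibre; so any correct argument must exploit openness/distality of $\pi$ (or some relative unique ergodicity statement) in an essential way, and you give no such argument --- recall also that unique ergodicity does not lift through isometric extensions (Furstenberg's example in Section 5 of this very paper), so there is no cheap route here. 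You yourself flag this passage as ``where the difficulty lies,'' but flagging it is not proving it; these two points are the substance of Lindenstrauss's claim, not routine reductions. (The remaining ingredients --- equal cardinality of fibres for a distal extension of minimal systems, and that a finite group/isometric extension of a minimal group rotation by a continuous cocycle is equicontinuous, hence measurably Kronecker --- are essentially correct, though the identification of a finite-to-one distal extension with a skew product $G\times (K/L)$ over the equicontinuous base also deserves a proof or a precise reference.)
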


The following result produces as much examples as we want.

\begin{thm}
Suppose $(Y, \mathcal{D},\nu, T)$ is an isometric extension of
the ergodic rotation $(\T, \mathcal{B}, \lambda, R_\alpha)$ by a finite group,
that is not a Kronecker system. Then any minimal distal topological
model of $(Y, \mathcal{D},\nu, T)$ is not an $\infty$-step
nilsystem.
\end{thm}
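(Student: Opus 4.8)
The plan is to argue by contradiction using the results just established for minimal distal systems together with the non-existence result of Lemma \ref{L2}. Suppose $(X,T)$ is a minimal distal topological model of $(Y,\mathcal{D},\nu,T)$ which happens to be an $\infty$-step nilsystem. By the structure theorem for $\infty$-step nilsystems proved above, $(X,T)$ is then an inverse limit of minimal nilsystems, and since each minimal nilsystem is uniquely ergodic, so is $(X,T)$. But a minimal distal model of $(Y,\mathcal{D},\nu,T)$ which is uniquely ergodic is precisely a uniquely ergodic distal model of $(Y,\mathcal{D},\nu,T)$, and Lemma \ref{L2} says no such model exists since $(Y,\mathcal{D},\nu,T)$ is an isometric extension of an ergodic rotation by a finite group that is not a Kronecker system. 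This contradiction forces $(X,T)$ not to be an $\infty$-step nilsystem.

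The main point to verify carefully is that the measure carried by the distal topological model, which by Proposition \ref{L1} is a Borel measure of full support, is the \emph{only} invariant measure once we know $(X,T)$ is an $\infty$-step nilsystem, and that it is (measure-theoretically) isomorphic to $(Y,\mathcal{D},\nu,T)$ — in other words that unique ergodicity of the model plus its being a model of $(Y,\mathcal{D},\nu,T)$ really do combine to say $(X,T)$ is a uniquely ergodic distal model in the sense of Lemma \ref{L2}. This is essentially immediate from the definition of a topological model (the system carries an invariant measure measure-theoretically isomorphic to $(Y,\mathcal{D},\nu,T)$), but one should state it explicitly so that the hypotheses of Lemma \ref{L2} are met verbatim.

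The expected obstacle is not deep here: it is bookkeeping about models and unique ergodicity, since the genuinely hard content is packaged inside Proposition \ref{L1} (existence of a minimal distal model) and Lemma \ref{L2} (no uniquely ergodic distal model), both quoted from \cite{L}. Since we also know from the previous section that $\RP^{[\infty]}$-pairs in a minimal system without nontrivial ${\rm Ind}_{fip}$-pairs collapse, an alternative and more self-contained route would be: take the minimal distal model $(X,T)$ given by Proposition \ref{L1}; if it were an $\infty$-step nilsystem then by Proposition \ref{distal} it would have no nontrivial ${\rm Ind}_{fip}$-pairs, hence (being an inverse limit of nilsystems) it would be uniquely ergodic, again contradicting Lemma \ref{L2}. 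Either way the chain is short; I would present the contradiction argument as above and merely remark that Proposition \ref{L1} guarantees that at least one minimal distal model exists so the statement is not vacuous.
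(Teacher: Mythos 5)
Your argument is correct and is essentially the paper's own proof: the paper also combines the structure theorem (an $\infty$-step nilsystem is an inverse limit of minimal nilsystems, hence uniquely ergodic) with Lemma \ref{L2} to conclude that a minimal distal model cannot be an $\infty$-step nilsystem, with Proposition \ref{L1} only guaranteeing such a model exists. Your extra bookkeeping about the model's measure and the alternative route via Proposition \ref{distal} are fine but not needed.
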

\begin{proof} By Proposition \ref{L1}, $(Y, \mathcal{D},\nu, T)$ has
a minimal topologically distal system $(X,T)$ model equipped with a Borel
measure of full support. By Lemma \ref{L2}, $(X,T)$ cannot be
uniquely ergodic. It is clear that a minimal $\infty$-step
nilsystem is uniquely ergodic, so $(X,T)$ is not an $\infty$-step
nilsystem.
\end{proof}

\subsection{An explicit example}
A way to produce an explicit example is to use the following
Furstenberg result. It appeared first in \cite{F61}. We recall that
a topological dynamical system is \emph{strictly ergodic} if it is
minimal and uniquely ergodic.

\begin{thm}\cite[Theorem 3.1]{F61}\label{se}
Let $(\W_0, T_0)$ be a strictly ergodic system and $\mu_0$ its
unique ergodic measure. Let $\W=\W_0\times \T$ and let
$T:\W\rightarrow \W$ be defined by $T(\w_0,s)=(T_0(\w_0),g(\w_0)s)$,
where $g: \W_0 \rightarrow \T$ is a continuous function. Then, if
the equation
\begin{equation}
g^k(\w_0)=R(T_0(\w_0))/R(\w_0)
\end{equation}
has a solution $R: \W_0 \rightarrow \T$ which is measurable but not
equal almost everywhere to a continuous function, then
$\displaystyle
\lim_{N\rightarrow\infty}\frac{1}{N}\sum_{n=0}^{N-1}f\circ
T^{n}(\w)$ cannot exist for all continuous functions $f$ and all
$\w\in\W$.
\end{thm}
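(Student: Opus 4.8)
The plan is to argue by contradiction: assume that $\lim_{N\to\infty}\frac1N\sum_{n=0}^{N-1}f(T^{n}\omega)$ exists for every continuous $f$ on $\Omega$ and every $\omega\in\Omega$, and deduce that $R$ must coincide $\mu_0$-almost everywhere with a continuous function, contrary to hypothesis. The useful test functions are $f(\omega_0,s)=h(\omega_0)s^{k}$ with $h\in C(\Omega_0)$. Since $|R|\equiv 1$ the complex measure $R\,d\mu_0$ is nonzero, so (non-degeneracy of the pairing $C(\Omega_0)\times\{\text{complex measures}\}$) we may fix $h\in C(\Omega_0)$ with $c:=\int_{\Omega_0}hR\,d\mu_0\neq 0$. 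For such an $f$ one computes directly
\[
\frac1N\sum_{n=0}^{N-1}f\bigl(T^{n}(\omega_0,s)\bigr)=s^{k}\,\psi_N(\omega_0),\qquad
\psi_N(\omega_0):=\frac1N\sum_{n=0}^{N-1}h(T_0^{n}\omega_0)\,g^{(n)}(\omega_0)^{k},
\]
where $g^{(n)}=\prod_{i=0}^{n-1}g\circ T_0^{i}$ is the cocycle generated by $g$ (so that $T^{n}(\omega_0,s)=(T_0^{n}\omega_0,g^{(n)}(\omega_0)s)$ and $g^{(0)}\equiv 1$). Hence the assumption forces the continuous functions $\psi_N$ to converge pointwise on all of $\Omega_0$; write $\psi$ for the limit, which is of Baire class $1$.

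Next I pin down $\psi$. Iterating the cohomological equation $g^{k}=(R\circ T_0)/R$ along the orbit gives $g^{(n)}(\omega_0)^{k}=R(T_0^{n}\omega_0)/R(\omega_0)$ for every $\omega_0$ in a $T_0$-invariant $\mu_0$-conull set; there $\psi_N(\omega_0)=\overline{R(\omega_0)}\cdot\frac1N\sum_{n=0}^{N-1}(hR)(T_0^{n}\omega_0)$, and Birkhoff's pointwise ergodic theorem for the ergodic system $(\Omega_0,\mu_0,T_0)$ yields $\psi(\omega_0)=c\,\overline{R(\omega_0)}$ for $\mu_0$-a.e.\ $\omega_0$ (using $|R|\equiv 1$). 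Independently, the telescoping identity
\[
g(\omega_0)^{k}\,\psi_N(T_0\omega_0)-\psi_N(\omega_0)=\frac1N\bigl(h(T_0^{N}\omega_0)\,g^{(N)}(\omega_0)^{k}-h(\omega_0)\bigr)
\]
tends to $0$ uniformly on $\Omega_0$, so passing to the pointwise limit gives $\psi(T_0\omega_0)=g(\omega_0)^{-k}\,\psi(\omega_0)$ for all $\omega_0\in\Omega_0$. In particular $|\psi|$ is $T_0$-invariant, of Baire class $1$, and equal $\mu_0$-a.e.\ to $|c|$; since $(\Omega_0,T_0)$ is minimal and a $T_0$-invariant Baire-$1$ function on a minimal system is constant (its continuity points form a dense $T_0$-invariant $G_{\delta}$ on which the function is constant by a density argument using minimality, whence it is constant everywhere), we conclude $|\psi|\equiv|c|>0$.

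The heart of the argument is to upgrade this to \emph{continuity} of $\psi$; once that is known we are done, since then $R=\overline{\psi}/\overline{c}$ $\mu_0$-a.e., a continuous function, contradicting the hypothesis on $R$. To this end consider the circle extension $(\Omega_0\times\mathbb{T},\widetilde T)$ with $\widetilde T(\omega_0,t)=(T_0\omega_0,g(\omega_0)^{k}t)$, and the continuous function $\widetilde F(\omega_0,t)=h(\omega_0)t$ on it: one has $\frac1N\sum_{n=0}^{N-1}\widetilde F(\widetilde T^{n}(\omega_0,t))=t\,\psi_N(\omega_0)\to t\,\psi(\omega_0)$, so $\Xi(\omega_0,t):=t\,\psi(\omega_0)$ is $\widetilde T$-invariant, of Baire class $1$, of constant modulus $|c|>0$, and genuinely dependent on $t$. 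If $(\Omega_0\times\mathbb{T},\widetilde T)$ is minimal, then $\Xi$ must be constant by the lemma used above, a contradiction. If it is not minimal, then Furstenberg's criterion for minimality of circle extensions over a minimal base produces an integer $j\geq 1$, a continuous $h_0:\Omega_0\to\mathbb{T}$ and $\lambda_0\in\mathbb{T}$ with $g^{kj}=\lambda_0\,(h_0\circ T_0)/h_0$; combined with $g^{kj}=(R^{j}\circ T_0)/R^{j}$ this shows $R^{j}/h_0$ is a measurable eigenfunction of $(\Omega_0,\mu_0,T_0)$, which by strict ergodicity of $(\Omega_0,T_0)$ coincides $\mu_0$-a.e.\ with a continuous eigenfunction. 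Iterating this reduction --- at each stage either exhibiting a continuous transfer function for $g^{k}$ (which again contradicts that $R$ is not $\mu_0$-a.e.\ continuous) or passing to a strictly smaller group extension, a process that must terminate --- eventually forces $R$ itself to be $\mu_0$-a.e.\ equal to a continuous function, the desired contradiction.

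I expect the last step to be the main obstacle, namely upgrading pointwise-everywhere convergence of the twisted averages $\psi_N$ over the strictly ergodic base $(\Omega_0,T_0)$ to \emph{uniform} convergence (equivalently, to continuity of $\psi$): the minimal case is immediate from the Baire-category lemma, but the non-minimal case requires the inductive analysis via Furstenberg's structure theory for finite and circle extensions, together with the classical fact that every measurable eigenfunction of a strictly ergodic system has a continuous version. It is here that strict ergodicity of $(\Omega_0,T_0)$ --- not just ergodicity of $\mu_0$ --- is used in an essential way; note also that the mere existence of a measurable solution $R$ only tells us that the natural invariant measure $\mu_0\times\lambda$ on $\Omega$ is non-ergodic, whereas it is the hypothesis that $R$ is \emph{not} $\mu_0$-a.e.\ equal to a continuous function that forces the averages $\frac1N\sum_{n=0}^{N-1}f(T^{n}\omega)$ to genuinely diverge at some $\omega$ for some continuous $f$.
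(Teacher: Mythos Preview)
The paper does not prove this statement: it is quoted from \cite{F61} and used as a black box to construct the explicit example in Section~\ref{section-example}. There is therefore no proof in the paper to compare your proposal against, and what follows addresses only its internal soundness.

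Everything up through the construction of the $\widetilde T$-invariant Baire-$1$ function $\Xi(\omega_0,t)=t\,\psi(\omega_0)$ of constant modulus $|c|>0$ on $\Omega_0\times\T$ is correct and well organised. The difficulty, as you anticipate, is the final continuity upgrade, and there your argument contains a genuine error. In the non-minimal branch you appeal to ``the classical fact that every measurable eigenfunction of a strictly ergodic system has a continuous version''. This is \emph{false}: there exist strictly ergodic subshifts (for instance certain Toeplitz and linearly recurrent Cantor systems) whose measurable point spectrum strictly contains the topological one. Hence from ``$R^{j}/h_0$ is a measurable eigenfunction of $(\Omega_0,T_0)$'' nothing follows without feeding the hypothesis on $\Omega$ back in, and you do not say how. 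Even if you could show $R^{j}$ is a.e.\ continuous, extracting a continuous $j$-th root is a separate problem you do not address, and the promised ``iteration to a strictly smaller group extension'' is never made precise.

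In fact the case split is unnecessary and the obstruction dissolves if you redeploy your own ingredients. Choose \emph{any} minimal $\widetilde T$-invariant subset $M\subset\Omega_0\times\T$. The restriction $\Xi|_M$ is still a pointwise limit of the continuous functions $t\,\psi_N(\omega_0)$, hence Baire-$1$ on $M$, and it is $\widetilde T|_M$-invariant on the minimal system $(M,\widetilde T|_M)$; by the very lemma you already proved it is therefore constant, say $\Xi|_M\equiv a$ with $|a|=|c|>0$. Since $|\psi|\equiv|c|\neq0$, the relation $t\,\psi(\omega_0)=a$ forces $t=a/\psi(\omega_0)$ for every $(\omega_0,t)\in M$, while minimality of the base gives $\pi_1(M)=\Omega_0$; thus $M$ is exactly the graph of $\omega_0\mapsto a/\psi(\omega_0)$. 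But $M$ is closed and $\pi_1\colon M\to\Omega_0$ is a continuous bijection of compact metric spaces, hence a homeomorphism, so $a/\psi$ --- and therefore $\psi$ --- is continuous. Now $R=\overline{\psi}/\overline{c}$ $\mu_0$-a.e.\ gives the desired contradiction. No eigenfunction lemma and no induction are needed.
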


Now we recall some elements of the example from \cite{F61}
satisfying the criterion of the previous theorem. The first step
(that we omit here) is the construction of a sequence of integers
$(n_{k})_{k\in \Z}$ and an irrational number $\a$ such that
$$h(\theta)=\sum_{k\neq 0}\frac{1}{|k|}(e^{2\pi i n_k\a}-1)e^{2\pi i n_k\theta}$$
and $g(e^{2\pi i\theta})=e^{2\pi i\lambda h(\theta)}$, where
$\lambda$ is as yet undetermined, are $C^{\infty}$ functions of
$[0,1)$ and $\T$ respectively. Clearly,
$h(\theta)=H(\theta+\a)-H(\theta)$, where
$$H(\theta)=\sum_{k\neq 0}\frac{1}{|k|}e^{2\pi i n_k\theta} \ .$$
Thus $H(\cdot)$ is in $L^2(0,1)$ and in particular defines a
measurable function. However, $H(\cdot)$ cannot correspond to a
continuous function since $\sum_{k\neq 0}\frac{1}{|k|}=\infty$ and
hence the series is not Ces\`{a}ro summable at $\theta=0$
(\cite{An}). Therefore, for some $\lambda$, $e^{2\pi i\lambda H(\theta)}$ cannot be a
continuous function either. Considering $R(e^{2\pi i\theta})=
e^{2\pi i\lambda H(\theta)}$ we get
$$R(e^{2\pi i\a}s)/R(s)=g(s)$$ with $R: \T \rightarrow \T$ measurable but not continuous.

By Theorem \ref{se}, the transformation
$T$ of $\T \times \T$ given by
$$T(s_1,s_2)=(e^{2\pi i\a}s_1,g(s_1)s_2)$$
will not possess all its ergodic averages, i.e. there are
a continuous function $f$ and $\w \in \T \times \T$ such that
$\displaystyle \lim_{N\rightarrow\infty}\frac{1}{N}\sum_{n=0}^{N-1} f\circ T^n(\w)$ does not
exist.

Let $\W=\ov{orb(\w,T)}\subset \T \times \T$. It is easy to get
that $(\T \times \T,T)$ is distal, and so $(\W,T)$ is minimal
distal. If $(\W,T)$ is an $\infty$-step nilsystem, then it is an
inverse limit of minimal nilsystems, and of course $(\W,T)$ is
strictly ergodic. Let $h=f|_\W$ and $h$ apparently continuous on
$\W$, so $\displaystyle \lim_{N\rightarrow\infty}\frac{1}{N}\sum_{n=0}^{N-1}
g\circ T^n (\w)$ exists, contradicting the choice of $f$ and $\w$. Therefore
$(\W,T)$ is minimal distal but not an $\infty$-step nilsystem.

\section{Discussion about the unique ergodicity}\label{uniqueergo}

In this section we aim to investigate the question whether a minimal
system without ${\rm Ind}_{fip}$-pairs is uniquely ergodic. First we
observe that when $(X,T)$ is minimal, then
$(X_\infty=X/\RP^{[\infty]},T)$ is uniquely ergodic since it is an
inverse limit of uniquely ergodic systems.

\medskip

Assume that $(X,T)$ is a minimal system and let $\mu$ be an ergodic
measure for $(X,T)$. Then $(X, \mathcal{B},\mu, T)$ is an ergodic
measure preserving system, where $\mathcal{B}$ is the
$\sigma$-algebra of Borel sets of $X$ (we omit $\mathcal{B}$ in the
sequel). In \cite{HK05}, to prove the convergence in
$L^2(X,\mathcal{B},\mu)$ of some non-conventional ergodic averages,
the authors introduced measures
$\mu^{[d]}\in\mathcal{M}(X^{[d]},T^{[d]})$ for any integer $d\geq 1$
and  used them to produce the maximal measure theoretical factor of
order $d$ of $(X,\mu,T)$ (in the measurable context this means that
the system is an inverse limit, with measurable factor maps, of
$d$-step nilsystems), denoted by $(Z_d, \cZ_d,\mu_d,T)$.

\medskip

In the topological setting, $(X_d= X/\RP^{[d]},T)$ is the maximal
factor of order $d$ of $(X,T)$ and is uniquely ergodic. In
\cite{HKM} it was observed that $(X_d,T)$ is also a system of order
$d$ in the measurable sense for its unique invariant measure. This
implies that $X_d$ is a factor of $Z_d$ in the measurable sense.

\medskip

Let $\mu=\int_{Z_{d}} \mu_z d\mu_d(z)$ be the disintegration of
$\mu$ over $\mu_d$. Pairs in the support of the measure
$$\lambda_d=\int_{{Z}_{d}} \mu_z\times \mu_z d\mu_d(z)$$
are called $\F_d^\mu$-pairs, where $\F_d^\mu=supp(\lambda_d)$. To
study $\F_d^\mu$ we will need the following lemma from \cite{HK05}.

\begin{lem}\cite[Propostion 4.7., Theorem 13.1.]{HK05}\label{HK}
Let $d\geq1$ be an integer and $V_d=\{0,1\}^{[d]}$.
\begin{enumerate}
\item For $f_\ep$, $\ep\in V_d$, bounded measurable
functions on $X$,
$$\int_{X^{[d]}}\bigotimes\limits_{\ep\in V_d}f_\ep d\mu^{[d]}=
\int_{{Z}_{d-1}^{[d]}}\bigotimes\limits_{\ep\in
V_d}\E(f_\ep|\mathcal{Z}_{d-1}) d\mu_{d-1}^{[d]},$$ where
$({Z}_{d-1}^{[d]},\mu_{d-1}^{[d]})$ is the joining of $2^d$ copies
of $({Z}_{d-1},\mu_{d-1})$.

\item For $f_\ep$, $\ep\in V_d$, bounded measurable
functions on $X$, the average
$$\prod\limits_{i=1}^d\frac{1}{N_i-M_i}\sum\limits_{{\bf n}\in
[M_1,N_1)\times\cdots\times[M_d,N_d)}\int_{X} \prod\limits_{\ep\in V_d}
f_\ep\circ T^{{\bf n}\cdot\ep} d\mu$$
converges to
$$\int_{X^{[d]}}\prod\limits_{\ep\in V_d}f_\ep(x_\ep) d\mu^{[d]}({\bf x})$$
as $N_i-M_i\rightarrow \infty$ for all $1\leq i\leq d$.
\end{enumerate}
\end{lem}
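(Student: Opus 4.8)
The plan is to prove both statements by induction on $d$, using the inductive construction of the cube measures from \cite{HK05}: $\mu^{[1]}$ is the relatively independent self-joining of $(X,\mu)$ over the $\sigma$-algebra $\mathcal{I}$ of $T$-invariant sets, and for $d\ge 2$, $\mu^{[d]}$ is the relatively independent self-joining of $\mu^{[d-1]}$ over the $\sigma$-algebra $\mathcal{I}^{[d-1]}$ of $(T^{[d-1]})$-invariant sets of $(X^{[d-1]},\mu^{[d-1]})$; equivalently, for bounded $F,G$ on $X^{[d-1]}$,
\[
\int_{X^{[d]}} F\otimes G\,d\mu^{[d]}=\int_{X^{[d-1]}}\E(F\mid\mathcal{I}^{[d-1]})\,\E(G\mid\mathcal{I}^{[d-1]})\,d\mu^{[d-1]}.
\]
I will also use the functoriality of cube measures: if $p:(X,\mu,T)\to (W,\nu,T)$ is a factor map then $p^{[d]}_*\mu^{[d]}=\nu^{[d]}$.

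For part (2), the base case $d=1$ is the von Neumann mean ergodic theorem. For the inductive step, split each $\ep\subset[d]$ as $\ep=\ep'$ or $\ep=\ep'\cup\{d\}$ with $\ep'\subset[d-1]$ and factor $T^{n_d}$ out of the terms with $d\in\ep$, rewriting $\int_X\prod_{\ep\subset[d]}f_\ep\circ T^{{\bf n}\cdot\ep}\,d\mu=\int_X\prod_{\ep'\subset[d-1]}\bigl(f_{\ep'}\cdot(f_{\ep'\cup\{d\}}\circ T^{n_d})\bigr)\circ T^{{\bf n}'\cdot\ep'}\,d\mu$ with ${\bf n}'=(n_1,\dots,n_{d-1})$. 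By the inductive hypothesis (applied in the variables $n_1,\dots,n_{d-1}$, with the requisite uniformity in $n_d$ coming from uniform boundedness) the inner average converges, as $N_i-M_i\to\infty$ for $i\le d-1$, to $\int_{X^{[d-1]}}F\cdot\bigl(G\circ(T^{[d-1]})^{n_d}\bigr)\,d\mu^{[d-1]}$, where $F=\bigotimes_{\ep'}f_{\ep'}$ and $G=\bigotimes_{\ep'}f_{\ep'\cup\{d\}}$. Averaging over $n_d\in[M_d,N_d)$ and invoking the mean ergodic theorem for $(T^{[d-1]})$ on $(X^{[d-1]},\mu^{[d-1]})$, together with the displayed recursion and the identity $\int \E(F\mid\mathcal{I}^{[d-1]})\E(G\mid\mathcal{I}^{[d-1]})=\int F\cdot\E(G\mid\mathcal{I}^{[d-1]})$, gives exactly $\int_{X^{[d]}}\prod_{\ep\subset[d]}f_\ep(x_\ep)\,d\mu^{[d]}$, since $(F\otimes G)({\bf x})=\prod_{\ep\subset[d]}f_\ep(x_\ep)$.

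For part (1), the key input is the Cauchy--Schwarz--Gowers inequality for the cube measures, $\bigl|\int_{X^{[d]}}\bigotimes_{\ep\in V_d}f_\ep\,d\mu^{[d]}\bigr|\le\prod_{\ep\in V_d}\normm{f_\ep}_d$ with $\normm{f}_d^{2^d}=\int_{X^{[d]}}\bigotimes f\,d\mu^{[d]}$, proved by peeling off one coordinate at a time and applying Cauchy--Schwarz using the relatively independent joining structure of $\mu^{[d]}$ over $\mathcal{I}^{[d-1]}$; together with the characterization of $Z_{d-1}$ from \cite{HK05} as the factor of $X$ for which $\normm{f}_d=0\Leftrightarrow\E(f\mid\mathcal{Z}_{d-1})=0$. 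Writing $f_\ep=\E(f_\ep\mid\mathcal{Z}_{d-1})+g_\ep$ with $\E(g_\ep\mid\mathcal{Z}_{d-1})=0$, expanding $\bigotimes_\ep f_\ep$ multilinearly, and discarding by the inequality every term containing at least one factor $g_\ep$, one gets $\int_{X^{[d]}}\bigotimes_\ep f_\ep\,d\mu^{[d]}=\int_{X^{[d]}}\bigotimes_\ep\E(f_\ep\mid\mathcal{Z}_{d-1})\,d\mu^{[d]}$. Each $\E(f_\ep\mid\mathcal{Z}_{d-1})$ is the pullback of a function $\bar f_\ep$ on $Z_{d-1}$ along the factor map $p$, so $\bigotimes_\ep\E(f_\ep\mid\mathcal{Z}_{d-1})=(\bigotimes_\ep\bar f_\ep)\circ p^{[d]}$; pushing forward and using functoriality identifies the last integral with $\int_{Z_{d-1}^{[d]}}\bigotimes_\ep\E(f_\ep\mid\mathcal{Z}_{d-1})\,d\mu_{d-1}^{[d]}$, which is the asserted formula.

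The main obstacle is not these two short arguments but the machinery they rest on from \cite{HK05}: the Cauchy--Schwarz--Gowers inequality and, above all, the characterization of $Z_{d-1}$ through the seminorm $\normm{\cdot}_d$, whose proof requires the full inductive analysis of the measures $\mu^{[d]}$ (in particular that $\mathcal{I}^{[d-1]}$ is subordinate to the coordinate copies of $\mathcal{Z}_{d-1}$), as well as some care with the multiparameter limit in part (2). I would therefore quote the inequality, the description of $Z_{d-1}$, and the functoriality of cube measures from \cite{HK05}, and write out in detail only the multilinear expansion for (1) and the averaging induction for (2).
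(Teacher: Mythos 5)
This lemma is not proved in the paper at all: it is quoted verbatim from Host--Kra (\cite{HK05}, Proposition 4.7 and Theorem 13.1), so there is no internal proof to compare against; what can be judged is whether your sketch would actually establish the quoted facts. Your treatment of part (1) is the standard argument and is sound, granted the ingredients you quote (the Cauchy--Schwarz--Gowers inequality, the characterization $\E(f|\mathcal{Z}_{d-1})=0\Leftrightarrow\normm{f}_d=0$, and functoriality of the cube measures): decomposing $f_\ep=\E(f_\ep|\mathcal{Z}_{d-1})+g_\ep$, killing every mixed term, and pushing forward under the factor map is exactly how this relative independence statement is obtained.

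Part (2), however, has a genuine gap at the interchange of limits. Your induction computes, for each \emph{fixed} $n_d$, the limit of the $(d-1)$-parameter averages of $\int\prod_{\ep'}\bigl(f_{\ep'}\cdot(f_{\ep'\cup\{d\}}\circ T^{n_d})\bigr)\circ T^{{\bf n}'\cdot\ep'}d\mu$, and then averages the resulting limits over $n_d$. But the statement asserts convergence of the joint box average as all $N_i-M_i\to\infty$ independently (with arbitrary $M_i$), and a bounded multi-sequence can have iterated limits without having a box limit; to pass from the iterated to the joint limit you need the inner convergence to be uniform in $n_d$. Uniform boundedness of the functions does not give this: the inductive hypothesis is applied to the functions $f_{\ep'}\cdot(f_{\ep'\cup\{d\}}\circ T^{n_d})$, which vary with $n_d$, and the rate of convergence (ultimately a mean-ergodic-theorem rate, governed by spectral data of these functions, not by their sup norms) has no a priori uniform bound over this family. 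This is precisely why the actual proof of Theorem 13.1 in \cite{HK05} is not a naive iterated-limit induction: the multiparameter averages are handled through an $L^2$ convergence statement, with error terms controlled via the van der Corput lemma and the seminorms $\normm{\cdot}_d$. So either strengthen your inductive statement to an $L^2$ (or uniform-in-$n_d$) form and prove that, or invoke Theorem 13.1 itself; as written, the "averaging induction for (2)" does not close.
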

\medskip

We get the following theorem about $\F_d^\mu$.

\begin{thm}\label{weakform}
Let $(X,T)$ be a minimal system and $\mu$ an ergodic measure on $X$.

\begin{enumerate}\item Let $d\geq 1$ be an integer, then $\F_d^\mu\subset \RP^{[d]}$.

\item $\bigcap_{d\in \N} \F_d^\mu\subset {\rm Ind}_{fip}(X,T)$.

\end{enumerate}
\end{thm}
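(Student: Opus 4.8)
The plan is to prove the two statements separately, with the second following from the first together with the IP-version of the van der Corput / Lemma~\ref{HK} machinery.

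\medskip

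\textbf{Part (1): $\F_d^\mu\subset\RP^{[d]}$.}
First I would set $Z=Z_{d-1}$ and recall from Lemma~\ref{HK}(1) that for bounded measurable functions $f_\ep$ on $X$,
$$\int_{X^{[d]}}\bigotimes_{\ep\in V_d}f_\ep\,d\mu^{[d]}
=\int_{Z^{[d]}}\bigotimes_{\ep\in V_d}\E(f_\ep|\cZ_{d-1})\,d\mu_{d-1}^{[d]}.$$
The point is that the measure $\lambda_d=\int_{Z_d}\mu_z\times\mu_z\,d\mu_d(z)$ lives on $X\times X$ and I want to compare its support with $\RP^{[d]}$, which by Theorem~\ref{thm-1}(1) is the set of pairs $(x,y)$ with $(x,y,\dots,y)\in\Q^{[d+1]}=\overline{\F^{[d+1]}}(x^{[d+1]})$. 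The natural route: take a pair $(x_1,x_2)\in\F_d^\mu$ and neighborhoods $U_1\ni x_1$, $U_2\ni x_2$; since $(x_1,x_2)$ is in the support of $\lambda_d$ we get $\lambda_d(U_1\times U_2)>0$, i.e. $\int_{Z_d}\mu_z(U_1)\mu_z(U_2)\,d\mu_d(z)>0$. I would then use Lemma~\ref{HK}(2) with the averaging parameters along a single direction to realize the weak-mixing-relative-to-$Z_d$ property: roughly, because $(x_1,x_2)$ is a ``$\F_d^\mu$-pair'', meaning the two points have positive conditional correlation over the order-$d$ factor, the $d$-dimensional dynamical average of $\mathbf{1}_{U_{s(\ep)}}$ over $\ep\in V_{d+1}$ (with one pair of opposite coordinates forced to be $U_1$ and $U_2$) is positive. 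Concretely I would apply Lemma~\ref{HK}(2) to $X^{[d+1]}$, choosing $f_\ep=\mathbf{1}_{U_1}$ for $\ep=\emptyset$, $f_\ep=\mathbf{1}_{U_2}$ for $\ep=\{1\}$ (say), and $f_\ep=\mathbf{1}_X$ otherwise, and observe that the limiting integral $\int_{X^{[d+1]}}\prod f_\ep\,d\mu^{[d+1]}$ reduces via Lemma~\ref{HK}(1) to an integral over $Z_d^{[d+1]}$ that is exactly (a constant times) $\int \mu_z(U_1)\mu_z(U_2)\,d\mu_d(z)>0$. Positivity of this average forces, for infinitely many $\mathbf n\in\N^{d+1}$, a point $x$ with $T^{\mathbf n\cdot\emptyset}x=x\in U_1$, $T^{\mathbf n\cdot\{1\}}x\in U_2$ and $T^{\mathbf n\cdot\ep}x\in X$ trivially — hence $(x,T^{n_1}x)\in U_1\times U_2$ with the rest of the parallelepiped coordinates free, giving $(x_1,x_2,\dots,x_2)\in\Q^{[d+1]}$ after letting $U_1,U_2$ shrink. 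By Theorem~\ref{thm-1}(1) this says $(x_1,x_2)\in\RP^{[d]}$. (One has to be slightly careful about which coordinate of $V_{d+1}$ plays the role of $x_2$ and that the remaining coordinates can indeed be taken in all of $X$; the Euclidean-permutation invariance of $\Q^{[d+1]}$ handles symmetry issues.)

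\medskip

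\textbf{Part (2): $\bigcap_{d}\F_d^\mu\subset{\rm Ind}_{fip}(X,T)$.}
Here I would use Lemma~\ref{lem}: for $(x_1,x_2)$ with $x_1\neq x_2$ it suffices to show $\{x_1,x_2\}^{[d]}\subset\Q^{[d]}$ for every $d\ge 1$. Fix $d$ and fix a target configuration $s\in\{1,2\}^{[d]}$; I must produce, for any neighborhoods $U_1,U_2$, positive integers $n_1,\dots,n_d$ and $x\in X$ with $T^{n_1\ep_1+\dots+n_d\ep_d}x\in U_{s(\ep)}$ for all $\ep\subset[d]$. The strategy is again Lemma~\ref{HK}(2), now applied directly in dimension $d$: take $f_\ep=\mathbf{1}_{U_{s(\ep)}}$ and compute the limit of the $d$-dimensional average $\prod_i\frac{1}{N_i-M_i}\sum_{\mathbf n}\int_X\prod_{\ep\in V_d}\mathbf{1}_{U_{s(\ep)}}\circ T^{\mathbf n\cdot\ep}\,d\mu$. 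By Lemma~\ref{HK}(2) this converges to $\int_{X^{[d]}}\prod_\ep\mathbf{1}_{U_{s(\ep)}}\,d\mu^{[d]}$, and by Lemma~\ref{HK}(1) this equals $\int_{Z_{d-1}^{[d]}}\prod_\ep\E(\mathbf{1}_{U_{s(\ep)}}|\cZ_{d-1})\,d\mu_{d-1}^{[d]}$. I then need this to be positive; this is where the hypothesis $(x_1,x_2)\in\bigcap_{d}\F_d^\mu$ is used — being a $\F_d^\mu$-pair for \emph{every} $d$ means the conditional measures $\mu_z$ charge $U_1$ and $U_2$ simultaneously on a positive-measure set of $z$'s over \emph{every} order-$d$ factor, and one propagates this through the Host--Kra seminorm structure (the inductive relation $\mu^{[d]}$ is built from $\mu^{[d-1]}$ by a relatively independent self-joining over $Z_{d-1}$) to conclude $\mu^{[d]}\big(\bigotimes_\ep U_{s(\ep)}\big)>0$ for each prescribed $s$. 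Positivity of the limiting average then yields, for some $\mathbf n\in\N^d$ and some $x$ (in fact on a positive-measure set of $x$), $T^{\mathbf n\cdot\ep}x\in U_{s(\ep)}$ for all $\ep$, which is exactly $\{x_1,x_2\}^{[d]}\subset\Q^{[d]}$ as $U_1,U_2$ shrink. Running over all $d$ and invoking Lemma~\ref{lem} gives $(x_1,x_2)\in{\rm Ind}_{fip}(X,T)$.

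\medskip

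\textbf{Main obstacle.} The routine parts are the applications of Lemma~\ref{HK} and the passage from positive averages to nonempty parallelepiped intersections. The delicate point is Part~(2): showing that $(x_1,x_2)\in\F_d^\mu$ for all $d$ actually forces $\mu^{[d]}\big(\bigotimes_{\ep}U_{s(\ep)}\big)>0$ for \emph{every} pattern $s\in\{1,2\}^{[d]}$ and every $d$ — i.e. that the conditional positive-correlation of $x_1$ and $x_2$ over all order-$d$ factors is inherited by the full $d$-cube measure $\mu^{[d]}$ in all its coordinates, not just a single distinguished pair. This requires exploiting the recursive construction $\mu^{[d]}=\mu^{[d-1]}\times_{\mu_{d-1}^{[d-1]}}\mu^{[d-1]}$ together with the fact that $\E(\mathbf{1}_{U_i}|\cZ_{d-1})>0$ on a set of positive $\mu_{d-1}$-measure where it is \emph{jointly} positive for $i=1,2$, the latter being precisely the statement that $(x_1,x_2)$ is a $\F_{d-1}^\mu$-pair. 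Making this induction on $d$ airtight — in particular keeping track that shrinking $U_1,U_2$ is compatible with the positivity at every level simultaneously — is the technical heart of the argument.
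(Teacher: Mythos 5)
Your Part (1), as written, has a genuine gap. You apply Lemma \ref{HK}(2) to $X^{[d+1]}$ with $f_\emptyset=1_{U_1}$, $f_{\{1\}}=1_{U_2}$ and $f_\ep=1_X$ for the remaining $2^{d+1}-2$ vertices. That integral is positive for trivial reasons: the marginal of $\mu^{[d+1]}$ on an edge of the cube is $\mu\times\mu$ (ergodic case), so the limit equals $\mu(U_1)\mu(U_2)>0$ for \emph{any} pair of nonempty open sets (on a minimal system $\mu$ has full support) --- in particular it is not ``a constant times $\int\mu_z(U_1)\mu_z(U_2)\,d\mu_d(z)$'', and it makes no use of the hypothesis $(x_1,x_2)\in\F_d^\mu$. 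Consequently it cannot detect the nontrivial relation $\RP^{[d]}$. The subsequent step is also a non sequitur: producing a point of $\Q^{[d+1]}$ whose $\emptyset$- and $\{1\}$-coordinates lie in $U_1,U_2$ ``with the rest of the parallelepiped coordinates free'' does not yield $(x_1,x_2,\ldots,x_2)\in\Q^{[d+1]}$; to invoke Theorem \ref{thm-1}(1), or the definition of $\RP^{[d]}$, you must pin down \emph{all} $2^{d+1}$ coordinates. The paper's proof does exactly this: it prescribes $U_\ep=U_1$ at one vertex and $U_\ep=U_0$ at every other vertex, and the positivity of $\int_{X^{[d+1]}}\prod_{\ep\in V_{d+1}}1_{U_\ep}\,d\mu^{[d+1]}$ for this full pattern is precisely where $\lambda_d(U_0\times U_1)>0$ enters: one takes $B\subset Z_d$ of positive $\mu_d$-measure on which $\E(1_{U_0}|\mathcal{Z}_d)$ and $\E(1_{U_1}|\mathcal{Z}_d)$ are \emph{simultaneously} positive, applies Lemma \ref{HK}(1) (which conditions every vertex on $\mathcal{Z}_d$ at once), and uses the Host--Kra fact $\mu_d^{[d+1]}(B^{[d+1]})\ge\mu_d(B)^{2^{d+1}}>0$. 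The realized configuration then gives two points $y_0\in U_0$, $y_1\in U_1$ with $T^{{\bf n}\cdot\ep}y_0$ and $T^{{\bf n}\cdot\ep}y_1$ both in $U_0$ for every nonempty $\ep\subset[d]$, which verifies the definition of $\RP^{[d]}$ directly.

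In Part (2) your strategy (control all vertices with the pattern $s$, then use Lemma \ref{lem}) is the right one, but the key positivity $\int_{X^{[d]}}\prod_\ep 1_{U_{s(\ep)}}\,d\mu^{[d]}>0$ is exactly the point you leave open, proposing an induction over the recursive construction of $\mu^{[d]}$ and worrying about compatibility of shrinking neighborhoods across levels. No such induction is needed: since $(x_1,x_2)\in\F_{k}^\mu$ for every $k$, the same one-step argument as above (Lemma \ref{HK}(1) at the appropriate level, the set $B$ of joint positivity of the two conditional expectations, and $\mu_{k}^{[k+1]}(B^{[k+1]})\ge\mu_{k}(B)^{2^{k+1}}>0$) gives the positivity for every pattern in $\{U_1,U_2\}^{V_d}$ and every $d$ at once; Lemma \ref{HK}(2) then produces the required $\bigcap_{\ep}T^{-{\bf n}\cdot\ep}U_{s(\ep)}\neq\emptyset$, hence $\{x_1,x_2\}^{[d]}\subset\Q^{[d]}$. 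So the missing idea in both parts is the same: condition the \emph{entire} cube on the nilfactor and exploit joint positivity of the two conditional expectations on a set of positive measure, rather than tracking only two coordinates or inducting on the cube construction.
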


\begin{proof}
Let $(x_0,x_1)\in \F_d^\mu$. Then for any neighborhood $U_0\times U_1$
of $(x_0,x_1)$
$$\lambda_d(U_0\times U_1)=\int_{Z_d} \E(1_{U_0}|\mathcal{Z}_d)\E(1_{U_1}|\mathcal{Z}_d)d \mu_d>0.$$

Let $\{U_\ep:\ep\in V_{d+1}\}\subset\{U_0,U_1\}$ be open sets.
We claim that
$$\int_{X^{[d+1]}}\prod_{\ep\in
V_{d+1}}1_{U_\ep}(x_\ep)d\mu^{[d+1]}({\bf x})>0.$$

\noindent \textit{Proof of the claim}: Since $\int_{Z_d}
\E(1_{U_0}|\mathcal{Z}_d)\E(1_{U_1}|\mathcal{Z}_d)d \mu_d>0$, there
exists $B\in\mathcal{Z}_d$ with $\mu_d(B)>0$ such that for any $z\in
B$, $\E(1_{U_0}|\mathcal{Z}_d)(z)\E(1_{U_1}|\mathcal{Z}_d)(z) > 0$.
Also, since $U_\ep=U_0$ or $U_\ep=U_1$ for $\ep\in V_{d+1}$, we have
for any ${\bf z}\in B^{[d+1]}$ that $\prod_{\ep\in
V_{d+1}}\E(1_{U_\ep}|\mathcal{Z}_d)(z_\ep) >0$. Now, by the previous
lemma, we obtain
\begin{equation*}
\begin{aligned}
\int_{X^{[d+1]}}\prod_{\ep\in
V_{d+1}}1_{U_\ep}(x_\ep)d\mu^{[d+1]}({\bf x})&=
\int_{Z_d^{[d+1]}}\prod_{\ep\in
V_{d+1}}\E(1_{U_\ep}|\mathcal{Z}_d)(z_\ep)d\mu_d^{[d+1]}({\bf z})
\\&\geq \int_{B^{[d+1]}}\prod_{\ep\in V_{d+1}}\E(1_{U_\ep}|\mathcal{Z}_d)(z_\ep)d\mu_d^{[d+1]}({\bf z})
>0.
\end{aligned}
\end{equation*}
This completes the proof of the claim since it was shown in
\cite{HK05} that $\mu_d^{[d+1]}(B^{[d+1]})\ge \mu_d(B)^{2^{d+1}}>0$.

Again, by Lemma \ref{HK},
$$\frac{1}{N^{d+1}}\sum_{0\le n_1,\ldots,n_{d+1}< N} \int_{X}
\prod_{\ep\in V_{d+1}}1_{U_\ep}\circ
T^{n_1\ep_1+\ldots+n_{d+1}\ep_{d+1}}d\mu$$ converges to
$$\int_{X^{[d+1]}}\prod_{\ep\in
V_{d+1}}1_{U_\ep}(x_\ep)d\mu^{[d+1]}({\bf x})>0$$
as $N\rightarrow\infty$. Hence, there exists a Borel set
$B$ with $\mu(B)>0$ and $n_1,\ldots, n_{d+1}\in \Z_{+}$
such that for each $x\in B$
$$\prod_{\ep\in V_{d+1}}1_{U_\ep}\circ
T^{n_1\ep_1+\ldots+n_{d+1}\ep_{d+1}}(x)>0.$$ That is, for each $x\in
B$, $T^{n_1\ep_1+\ldots+n_{d+1}\ep_{d+1}}(x)\in U_\ep$ or
$$\bigcap_{\ep\in
V_{d+1}}T^{-n_1\ep_1-\ldots-n_{d+1}\ep_{d+1}}U_\ep\not=\emptyset.$$

First we prove statement (1). Let $(x_0,x_1)\in\F_d^\mu$. For $\delta>0$
let $U_0\times U_1$ be a neighborhood of $(x_0,x_1)$ where the diameters of
$U_0$ and $U_1$ are less than $\delta$. Set $U_{(0,\ldots,0)}=U_0$,
$U_{(0,\ldots,0,1)}=U_1$ and $U_\ep=U_0$ for any other
$\ep\in V_{d+1}$. By previous discussion, there exist $x\in X$
and $n_1,\ldots, n_{d+1}\in \Z_+$ such that
$T^{n_1\ep_1+\ldots+n_{d+1}\ep_{d+1}}(x)\in U_\ep$
for any $\ep\in V_{d+1}$. Let $y_0=x=T^0 x\in U_{(0,\ldots,0)}=U_0$,
$y_1=T^{n_{d+1}}x\in U_{(0,\ldots,0,1)}=U_1$. Then, for any
$\ep\in V_d\setminus\{(0,\ldots,0)\}$, we have
$T^{n_1\ep_1+\ldots+n_{d}\ep_{d}}(y_0)\in U_{\ep0}=U_0$ and
$T^{n_1\ep_1+\ldots+n_{d}\ep_{d}}(y_1)\in U_{\ep1}=U_0$.
Since the diameters of $U_0$ and $U_1$ are less than $\delta$, we get that
$d(x_0,y_0)<\delta$, $d(x_1,y_1)<\delta$ and for any
$\ep\in V_d\setminus\{(0,\ldots,0)\}$, $d(T^{n_1\ep_1+\ldots+n_{d}\ep_{d}}(y_0),
T^{n_1\ep_1+\ldots+n_{d}\ep_{d}}(y_1))<\delta$.
Therefore $\F_d^\mu\subset \RP^{[d]}$.

\medskip

To show (2), by Lemma \ref{lem}, it remains to prove that if
$(x_0,x_1)\in\bigcap_{d\in \N} \F_d^\mu$ then
$\{x_0,x_1\}^{[d]}\subset\Q^{[d]}$ for any integer $d\geq 1$.
Let $d\geq 1$ be an integer and take ${\bf x}=(x_\ep)_{\ep\in
V_d}\in\{x_0,x_1\}^{[d]}$. Given a neighborhood ${\bf V}$ of ${\bf
x}$, there exists a neighborhood $U_0 \times U_1$ of $(x_0,x_1)$
such that if we set $U_\ep=U_i$ depending on
$x_\ep=x_0$ or $x_\ep=x_1$, then $\bigotimes_{\ep\in V_d}U_\ep\subset
{\bf V}$. From the conclusion of part (1), there exist $x\in X$ and
$n_1,\ldots, n_d\in \Z_+$ such that
$T^{n_1\ep_1+\ldots+n_d\ep_d}(x)\in U_\ep$ for any $\ep\in V_d$. Let
${\bf n}=(n_1,\ldots,n_d)$. We have $(T^{{\bf n}\cdot\ep}x)_{\ep\in
V_d}\in{\bf V}$, and thus ${\bf x}\in\Q^{[d]}$. The proof is
completed.
\end{proof}

\begin{rem}
We have shown that $\F_d^\mu\subset \RP^{[d]}$. However, the
converse is not true in general. For example, let $(Z,S)$ be a
non-trivial $d$-step nilsystem  and $\nu$ an ergodic measure on $Z$.
In \cite{We} it was shown that there exists a weakly mixing minimal
uniquely ergodic system $(X,T)$ with the uniquely ergodic measure
$\mu$ satisfying that for all $x, y \in X$ with $y\not\in\{T^n
x\}_{n\in \Z}$, the orbit $\{(T^n x,T^n y)\}_{n\in \Z}$ is dense in
$X\times X$; and  $(Z,\nu,S)$ and $(X,\mu,T)$ are isomorphic. Then
$(X, \mu)$ coincides with $(Z_d(X),\mu_d)$, and so
$\F_\mu^d=\Delta_X$. Since $(X,T)$ is weakly mixing, we get that
$\RP^{[d]}=X\times X$, which implies that
$\F_\mu^d\subsetneqq\RP^{[d]}$.
\end{rem}

\medskip

A direct application of the above theorem is the following.

\begin{thm} Let $(X,T)$ be a minimal system with
${\rm Ind}_{fip}(X,T)=\Delta_X$, then for each
ergodic measure $\mu$, $(X, \mu,T)$ is measure theoretical isomorphic
to an $\infty$-step nilsystem.
\end{thm}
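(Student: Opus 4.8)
The plan is to combine Theorem \ref{weakform} with the unique ergodicity of $\infty$-step nilsystems and the measurable structure theorem for the factors $Z_d$. Fix an ergodic measure $\mu$ for $(X,T)$. By Theorem \ref{weakform}(2) we have $\bigcap_{d\in\N}\F_d^\mu\subset {\rm Ind}_{fip}(X,T)=\Delta_X$, so $\bigcap_{d\in\N}\F_d^\mu=\Delta_X$. Recall that $\F_d^\mu=supp(\lambda_d)$ where $\lambda_d=\int_{Z_d}\mu_z\times\mu_z\,d\mu_d(z)$ is the fibre-product measure coming from the disintegration of $\mu$ over the $d$-step pronilfactor $(Z_d,\mu_d,T)$. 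The point is that $\F_d^\mu=\Delta_X$ precisely detects that the disintegration of $\mu$ over $Z_d$ is trivial, i.e. $\mu_z=\delta_z$ for $\mu_d$-a.e.\ point; so the idea is to show that $\bigcap_d \F_d^\mu=\Delta_X$ forces this to happen for some (equivalently all sufficiently large) $d$.

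First I would show that $supp(\lambda_d)=\Delta_X$ implies $\lambda_d$ is supported on the diagonal, hence $\mu_z\times\mu_z$ is supported on $\Delta_X$ for $\mu_d$-a.e.\ $z$, which means $\mu_z$ is a point mass for a.e.\ $z$; this yields $(X,\mu,T)\cong(Z_d,\mu_d,T)$ as measure preserving systems, and we are done since $Z_d$ is an $\infty$-step (indeed $d$-step) nilsystem in the measurable sense. So the crux is to pass from $\bigcap_d\F_d^\mu=\Delta_X$ to $\F_d^\mu=\Delta_X$ for some finite $d$. Here I would use that the sequence $(\F_d^\mu)_{d\ge1}$ is decreasing (since $Z_{d}$ is a factor of $Z_{d+1}$, the disintegration over $Z_{d+1}$ refines that over $Z_d$, giving $\lambda_{d+1}\ge$ --- in the appropriate sense --- a measure dominating along fibres, hence $supp(\lambda_{d+1})\subset supp(\lambda_d)$), and that each $\F_d^\mu$ is closed; but a decreasing intersection of nonempty compact sets being trivial does not by itself give that one of them is trivial. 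The resolution should come from Theorem \ref{noteasy}(3): if $Z_n$ is isomorphic to $X_n=X/\RP^{[n]}$ for some $n$ then $Z_k\cong X_k$ for all $k\le n$, combined with the structural fact that when $\F_d^\mu$ stabilises it does so in a way governed by whether $Z_d=Z_{d+1}$, to which Theorem \ref{noteasy}(2) applies.

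Concretely, the argument I would run is: suppose for contradiction that $\F_d^\mu\ne\Delta_X$ for every $d$. Using Theorem \ref{weakform}(1), $\F_d^\mu\subset\RP^{[d]}$, and the projection of $\F_d^\mu$ to $Z_d$ relates it to $\RP^{[d]}$-classes; the measurable isomorphism $(X,\mu,T)\cong(Z_d,\mu_d,T)$ fails for all $d$, i.e.\ $Z_d$ is a proper measurable factor of $X$ for every $d$. One then argues that this produces nontrivial $\F_d^\mu$-pairs compatibly across all $d$ — the nontrivial fibre disintegrations must persist — giving a pair in $\bigcap_d\F_d^\mu\setminus\Delta_X$, the desired contradiction. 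The honest technical heart, and the step I expect to be the main obstacle, is exactly this persistence/compactness argument: showing that nontriviality of $\F_d^\mu$ for each finite $d$ cannot ``escape to infinity'' and must leave a nontrivial pair in the intersection. I would handle it by a diagonal/limit argument over $d$ together with the stabilisation results of Theorem \ref{noteasy}, using that once $Z_d=Z_{d+1}$ the tower of pronilfactors stabilises, so the only way all $\F_d^\mu$ are nontrivial is for them to share a common nontrivial pair.
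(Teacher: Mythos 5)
Your first paragraph contains the right mechanism, and it is the same one the paper uses: if the support of a fibre--product measure $\int \mu_z\times\mu_z$ is contained in $\Delta_X$, then the conditional measures are a.e.\ point masses and $(X,\mu,T)$ is measure theoretically isomorphic to the base of the disintegration. The gap is exactly the step you single out as the crux: passing from $\bigcap_{d\in\N}\F_d^\mu=\Delta_X$ to $\F_d^\mu=\Delta_X$ for some finite $d$. That implication is false, and no persistence/compactness or stabilisation argument can save it. Indeed, let $(X,T)$ be any minimal $\infty$-step nilsystem which is not a system of order $d$ for any finite $d$ (an inverse limit of nilsystems of strictly increasing order). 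It is uniquely ergodic and has ${\rm Ind}_{fip}(X,T)=\Delta_X$, yet for every $d$ the measurable extension $X\to Z_d$ is nontrivial, so $\mu_z$ fails to be a point mass on a set of positive $\mu_d$-measure and $\F_d^\mu\supsetneq\Delta_X$, while $\bigcap_d\F_d^\mu=\Delta_X$. A decreasing sequence of compact sets, each strictly larger than $\Delta_X$, can perfectly well shrink to $\Delta_X$ because the off-diagonal pairs approach the diagonal; nontriviality does ``escape to infinity''. Theorem \ref{noteasy} cannot repair this: the tower $(Z_d)_{d\ge1}$ need not stabilise, and the theorem you are proving does not claim isomorphism to a nilsystem of any finite order, only to an $\infty$-step one, so the reduction to finite $d$ is both false and unnecessary.

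The paper's proof avoids any finite $d$: one checks that $\bigcap_{d\in\N}\F_d^\mu=\F_\infty^\mu$, where $\F_\infty^\mu$ is the support of $\lambda_\infty=\int_{Z_\infty}\mu_z\times\mu_z\,d\mu_\infty(z)$ and $(Z_\infty,\mu_\infty)$ is the inverse limit of the measurable pronilfactors $(Z_d,\mu_d)$. (The inclusion actually needed, $\F_\infty^\mu\subseteq\F_d^\mu$ for every $d$, follows from the standard fact that if $\E(f|\cZ_d)\E(g|\cZ_d)=0$ a.e.\ for nonnegative $f,g$, then the sets $\{f>0\}$ and $\{g>0\}$ are disjoint mod $\mu$, applied to $f=\E(1_U|\cZ_\infty)$, $g=\E(1_V|\cZ_\infty)$; the reverse inclusion comes from martingale convergence of $\E(\,\cdot\,|\cZ_d)$ to $\E(\,\cdot\,|\cZ_\infty)$.) Then your point-mass argument, run at level $\infty$ rather than at a finite level, gives that the disintegration of $\mu$ over $Z_\infty$ is trivial, hence $(X,\mu,T)\cong(Z_\infty,\mu_\infty,T)$, which is by construction an inverse limit of measurable $d$-step pronilsystems, i.e.\ an $\infty$-step nilsystem in the measurable sense. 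So keep your first paragraph, delete the contradiction scheme of the second and third, and replace it by the identification $\bigcap_d\F_d^\mu=\F_\infty^\mu$ together with Theorem \ref{weakform}(2).
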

\begin{proof} Applying Theorem \ref{weakform} we get that $\bigcap_{d\in \N} \F_d^\mu\subset {\rm Ind}_{fip}(X,T)=\Delta_X$.
The result follows, since it is easy to check that $\bigcap_{d\in
\N} \F_d^\mu=\F_{\infty}^\mu$, where $\F_{\infty}^\mu$ is the
support of $\lambda_\infty=\int_{\mathcal{Z}_{\infty}} \mu_z\times
\mu_z d\mu_\infty(z)$ with $({Z}_{\infty},\mu_\infty)$ the inverse
limit of $({Z}_{d},\mu_d).$
\end{proof}

To show the unique ergodicity of an $\infty$-step nilsystem the following question is crucial.

\begin{ques}\label{conj2}
Let $(X,T)$ be an $E$-system (i.e. is transitive and admits an
invariant measure with full support), let $x$ be a transitive point
and $p$ be a fixed point of $(X,T)$. Is it true that
$(p,x,\ldots,x)\in \Q^{[d]}$ for any integer $d\geq 1$ ?
\end{ques}

If this question has a positive answer, then by Lemma \ref{lem-Q},
we have $\{x,p\}^{[d]}\subset\Q^{[d]}$ for any integer $d\geq 1$
(since $\overline{orb((x,p),T\times T)}=X\times \{p\}$), and thus
$(x,p)$ is an ${\rm Ind}_{fip}$-pair by Lemma \ref{lem}.

\begin{conj} Let $(X,T)$ be a minimal system with
${\rm Ind}_{fip}(X,T)=\Delta_X$. Then $(X,T)$ is uniquely ergodic.
\end{conj}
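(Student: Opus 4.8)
The plan is to reduce the statement to a property of the fibres of the canonical projection onto the maximal $\infty$-step nilfactor, and then to attack the residual difficulty through Question~\ref{conj2}, which the discussion following that question already singles out as the crucial point. By Theorem~\ref{prop11}, if $(X,T)$ is minimal with ${\rm Ind}_{fip}(X,T)=\Delta_X$, then $\pi:(X,T)\to(X_\infty,T)$ is an almost one-to-one extension of its maximal $\infty$-step nilfactor; being an inverse limit of minimal nilsystems, $(X_\infty,T)$ is uniquely ergodic, say with invariant measure $\nu$. It therefore suffices to rule out the existence of two distinct ergodic measures on $X$.

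So suppose, for contradiction, that $\mu_1\neq\mu_2$ are ergodic measures on $X$. Both project to $\nu$, so, writing the disintegrations $\mu_i=\int\mu_{i,y}\,d\nu(y)$ over $\nu$, one has $\mu_{1,y}\neq\mu_{2,y}$ on a set of positive $\nu$-measure, and there $(\mu_{1,y}\times\mu_{2,y})(\Delta_X)<1$. Hence the relative product $\lambda=\int_{X_\infty}\mu_{1,y}\times\mu_{2,y}\,d\nu(y)$ is a $T\times T$-invariant probability measure on $R_\pi=\RP^{[\infty]}$ carrying positive mass off $\Delta_X$. Choosing an ergodic component $\lambda'$ of $\lambda$ with $\lambda'(\Delta_X)=0$ and setting $B=supp(\lambda')$, a $\lambda'$-generic point $(x_1,x_2)\in B$ can be taken with $x_1\neq x_2$; it lies in $\RP^{[\infty]}\setminus\Delta_X$ and satisfies $\overline{orb((x_1,x_2),T\times T)}=B$. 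By Corollary~\ref{inde} it now suffices to verify that the first-coordinate projection $\pi_1:B\to X$ is semi-open, for then $(x_1,x_2)$ is a nontrivial ${\rm Ind}_{fip}$-pair of $(X,T)$, contradicting the hypothesis. The $T\times T$-minimal alternative in Corollary~\ref{inde} is not available here: the proof of Theorem~\ref{prop11} already shows that under our hypothesis $\RP^{[\infty]}$ contains no nontrivial $T\times T$-minimal point, so it is exactly semi-openness of $\pi_1$ that must be obtained.

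This semi-openness is the main obstacle, and it is why the statement is only conjectured. When $(X,T)$ is distal, $(B,T\times T)$ is itself minimal and $\pi_1$ is automatically semi-open — this recovers Proposition~\ref{distal}, that a distal system without ${\rm Ind}_{fip}$-pairs is an $\infty$-step nilsystem, hence uniquely ergodic — but a factor map from an $E$-system onto a minimal system need not be semi-open in general. The most promising route is via Question~\ref{conj2}: since, by the absence of nontrivial minimal pairs in $\RP^{[\infty]}$, the $E$-system $(B,T\times T)$ has a unique minimal subset, necessarily $\Delta_X$, collapsing $\Delta_X$ to a point produces an $E$-system $B'$ with a transitive point $x$ and a fixed point $p$; a positive answer to Question~\ref{conj2} then gives $(p,x,\dots,x)\in\Q^{[d]}$ for every $d$, whence, via Lemma~\ref{lem-Q} (the relevant orbit closure being $B'\times\{p\}$, on which the first projection is a homeomorphism) and Lemma~\ref{lem}, a nontrivial ${\rm Ind}_{fip}$-pair in that $E$-system. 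What remains open is both Question~\ref{conj2} itself — whose proof appears to require a mean-convergence statement for the relevant non-conventional averages valid along $E$-systems, rather than the uniquely ergodic version available through Lemma~\ref{HK} — and the transfer of the resulting independence pair back to $(X,T)$, equivalently the semi-openness of $\pi_1$ above. In the absence of these, the strongest unconditional conclusion remains the measure-theoretic statement already proved, namely that every ergodic measure on such an $X$ is measurably isomorphic to the Haar measure of an $\infty$-step nilsystem.
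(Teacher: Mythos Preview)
This statement is left as an open conjecture in the paper; no proof is given, only the one-sentence remark that a positive answer to Question~\ref{conj2}, combined with the proof technique of \cite[Theorem~4.4]{HLSY} and the lifting property of ${\rm Ind}_{fip}$-pairs from \cite{HLY1}, would settle it. Your outline is a more explicit version of the same strategy---manufacture from two hypothetical ergodic measures an $E$-system carrying a transitive point and a fixed point, then feed it to Question~\ref{conj2}---and is sound as far as it goes.

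Where you diverge from the paper is in listing a \emph{second} gap: you flag the ``transfer of the resulting independence pair back to $(X,T)$'' as open and equate it with the semi-openness of $\pi_1:B\to X$ needed for the direct attack via Corollary~\ref{inde}. These are not the same obstacle, and the transfer is in fact \emph{not} open---the lifting property the paper invokes is exactly what closes it. Once $(x,p)$ is a nontrivial ${\rm Ind}_{fip}$-pair in $B'$, the quotient map $B\to B'$ lifts it (by \cite{HLY1}) to an ${\rm Ind}_{fip}$-pair $\bigl((x_1,x_2),(z,z)\bigr)$ in $(B,T\times T)$ with $(z,z)\in\Delta_X$; the first-coordinate projection $\pi_1:B\to X$ is a genuine factor map onto the minimal system $(X,T)$ (the first marginal of $\lambda'$ is $T$-invariant, hence of full support by minimality), so its image $(x_1,z)$ lies in ${\rm Ind}_{fip}(X,T)$, and likewise $(x_2,z)$ via the second projection. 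Since $x_1\neq x_2$, at least one of these pairs is off the diagonal, contradicting ${\rm Ind}_{fip}(X,T)=\Delta_X$. No semi-openness is required anywhere on this route. Thus, in agreement with the paper's remark, Question~\ref{conj2} is the \emph{only} missing ingredient.
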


If Question \ref{conj2} has a positive answer, then using the proof
of \cite[Theorem 4.4]{HLSY} and the lifting property of ${\rm
Ind}_{fip}$-pairs \cite{HLY1}, we may conclude that the conjecture
holds.

\section{Topological complexity of $\infty$-step nilsystems}\label{section-complexity}

The big development in the study of non-conventional ergodic
averages during the last decade has put in evidence, among other
facts, the crucial role of nilsystems when studying ``polynomial''
phenomena in dynamical systems theory. The objective of this section
is to prove that $\infty$-step nilsystems have polynomial topological complexity.
It is well known that bounded complexity characterize minimal
rotations on compact Abelian groups (see \cite{BHM}). Some basic symbolic examples
(substitutions systems for example) show that polynomial complexity cannot
characterize $\infty$-step nilsystems, so  the characterizion of polynomial complexity seems to be a deep problem far to be solved.

In order to study the quantitative distribution of polynomial orbits
in nilmanifolds, Green and Tao introduced in \cite{GT} a metric
induced by the Mal'cev basis on a nilmanifold and they studied its
behaviour under left multiplication. We obtain as an application a
polynomial bound for the topological complexity of nilsystems and
consequently of $\infty$-step nilsystems. We use freely the
notations and results from \cite{GT} and \cite{Au}.

\subsection{Polynomial behaviour of orbits}
In this subsection we assume $G$ is a connected and simply connected Lie
Group and $\Gamma \subset G$ is a co-compact subgroup.
We will denote by $G_i$ the $i$-th subgroup of the
associated lower central series. Under these assumptions Mal'cev
proved the following theorem.

\begin{thm}[Mal'cev basis]
Let $G$ be an $m$-dimensional nilpotent Lie group and $\Gamma \subset
G$ a co-compact subgroup. There exists a basis $\mathcal{X}=\{
X_1,\ldots ,X_m\}$ of the associated Lie algebra $\mathfrak{g}$ such
that:

\begin{enumerate}
\item
For each $j\in \{0,\ldots,m-1\}$ the subspace
$\mathfrak{h}_j=\text{Span}(X_{j+1},\ldots,X_{m})$ is an ideal in
$\mathfrak{g}$ and $exp(\mathfrak{h}_j)$ is a normal subgroup in
$G$.

\item
$G_i=exp(\mathfrak{h}_{m-m_i})$, where $m_i=dim(G_i)$.

\item
Each $g\in G$ can be written uniquely as $exp(t_1X_1)\cdots
exp(t_mX_m)$ where  $t=(t_1,\ldots,t_m) \in \R^m$.

\item  $\Gamma=\{\exp(n_1X_1)\cdots \exp(n_m X_m) : n=(n_1,\ldots,n_m)\in \Z^m\}$.
\end{enumerate}

We say that $\mathcal{X}=\{ X_1,\ldots ,X_m\}$ is a Mal'cev basis
for $G/\Gamma$ adapted to the lower central series $(G_i)_{i\geq
0}$.
\end{thm}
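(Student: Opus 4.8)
The plan is to prove Mal'cev's theorem by induction on $m=\dim G$, the inductive engine being the last nontrivial term $G_s$ of the lower central series ($G_{s+1}=\{e\}$), which is connected, central, and carried isomorphically onto $(\R^{m_s},+)$ by $\exp$. The heart of the matter — and, I expect, the main obstacle — consists of two facts about the lattice: (a) $\Gamma$ is finitely generated, and the $\Q$-span $\mathfrak{g}_\Q$ of $\log\Gamma=\{\log\gamma:\gamma\in\Gamma\}$ is a Lie subalgebra of $\mathfrak{g}$ with $\mathfrak{g}_\Q\otimes_\Q\R=\mathfrak{g}$ (a rational structure on $\mathfrak{g}$ for which $\Gamma$ is integral); and (b) whenever $H\le G$ has Lie algebra $\mathfrak{h}$ with $\mathfrak{h}=(\mathfrak{h}\cap\mathfrak{g}_\Q)\otimes_\Q\R$ rational, $\Gamma H$ is closed and $\Gamma\cap H$ is a lattice in $H$. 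Granting these, the recursion is: apply (b) with $H=G_s$ to see that $\Lambda:=\Gamma\cap G_s$ is a lattice in $G_s$ and $\bar\Gamma:=\Gamma/\Lambda$ is a lattice in $\bar G:=G/G_s$ (a simply connected nilpotent group of dimension $m-m_s$); apply the inductive hypothesis to $(\bar G,\bar\Gamma)$; and assemble a Mal'cev basis of $G$ out of a $\Z$-basis of $\Lambda$ together with a lift of a Mal'cev basis of $\bar G$.

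To obtain the rational structure (a), I would first show that a discrete co-compact subgroup of a simply connected nilpotent Lie group is finitely generated, arguing by downward induction along the lower central series of $G$ and using a relatively compact fundamental set for $G/\Gamma$ to control a generating family. Next, using the Baker--Campbell--Hausdorff formula together with $\log(\gamma^n)=n\log\gamma$ and $\log(\gamma\gamma')=\log\gamma+\log\gamma'+\tfrac12[\log\gamma,\log\gamma']+\cdots$, one checks that $\mathfrak{g}_\Q$ is closed under the bracket: since $[\log\gamma,\log\gamma']$ agrees with $\log[\gamma,\gamma']$ modulo a deeper step of the central series, a downward induction on that series (at each step the relevant graded piece is a $\Q$-span of logarithms of commutators from $\Gamma$) yields $[\mathfrak{g}_\Q,\mathfrak{g}_\Q]\subseteq\mathfrak{g}_\Q$; and co-compactness of $\Gamma$ forces $\dim_\Q\mathfrak{g}_\Q=m$, so $\mathfrak{g}_\Q\otimes_\Q\R=\mathfrak{g}$. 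Fact (b) is then proved by a parallel induction on $\dim G$: $\Gamma\cap G_s$ is discrete in $G_s$ and co-compact there because $G_s\Gamma/\Gamma$ is a closed (hence compact) subset of $G/\Gamma$, using the central rational position of $G_s$; a general rational $H$ is reduced to this case by inducting on $\dim H$ and factoring out $G_s$.

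With (a) and (b) available, I would construct the basis as follows. Refine the lower central series $\mathfrak{g}=\mathfrak{g}_1\supseteq\mathfrak{g}_2\supseteq\cdots\supseteq\{0\}$ to a complete flag of rational ideals $\mathfrak{g}=\mathfrak{h}_0\supset\mathfrak{h}_1\supset\cdots\supset\mathfrak{h}_m=\{0\}$ with $\dim(\mathfrak{h}_j/\mathfrak{h}_{j+1})=1$ and each $\mathfrak{g}_i$ occurring as some $\mathfrak{h}_j$ (possible because a nilpotent Lie algebra carries a full flag of ideals, and the intermediate refinements can be kept rational). Choose $X_{j+1}\in(\mathfrak{h}_j\cap\mathfrak{g}_\Q)\setminus\mathfrak{h}_{j+1}$. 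Since each $\mathfrak{h}_j$ is an ideal, $\exp(\mathfrak{h}_j)$ is a closed normal subgroup and $\exp(\mathfrak{h}_{j-1})=\exp(\R X_j)\exp(\mathfrak{h}_j)$, so the map $(t_1,\dots,t_m)\mapsto\exp(t_1X_1)\cdots\exp(t_mX_m)$ is a polynomial diffeomorphism $\R^m\to G$ (iterating over $j$, using BCH), which is property (3); properties (1) and (2) hold by construction of the flag. It remains to rescale and correct the $X_j$ so that property (4) holds. On $G_s=\exp(\mathfrak{h}_{m-m_s})$, take $X_{m-m_s+1},\dots,X_m$ to be a genuine $\Z$-basis of the free abelian lattice $\Lambda=\Gamma\cap G_s$ (possible by (b)). For the remaining indices, lift a Mal'cev basis of $\bar G$ (inductive hypothesis) to representatives in $\mathfrak{g}_\Q$, then modify each representative by a vector in $\mathfrak{h}_{m-m_s}$ so that the products $\exp(n_1X_1)\cdots\exp(n_mX_m)$ lie in $\Gamma$ rather than merely in $\Gamma G_s$; the needed correction is solvable one index at a time because $\Gamma/\Lambda\cong\bar\Gamma$ and the obstruction at each stage is valued in the free abelian group $G_s/\Lambda$, where it becomes a $\Z$-linear condition.

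Finally one reads off (1)--(4) directly from the construction. The step I expect to be genuinely hard is the pair (a)--(b): the rational structure on $\mathfrak{g}$ and the principle that $\Gamma$ meets rational subgroups in lattices are exactly where the discreteness and co-compactness of $\Gamma$ must be combined with the BCH arithmetic of $G$; by contrast, refining the flag and performing the final rescaling are bookkeeping with linear algebra once the rational structure is in hand. (In the present paper this theorem is invoked rather than proved; a complete treatment may be found in \cite{GT} and the references cited there.)
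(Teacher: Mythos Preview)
The paper does not prove this theorem: it is stated as a classical result of Mal'cev and the authors explicitly say they ``use freely the notations and results from \cite{GT} and \cite{Au}.'' You yourself note this in your final parenthetical. There is therefore no proof in the paper to compare your proposal against.

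For what it is worth, your outline is the standard route to Mal'cev's theorem (induction on $\dim G$ via the central subgroup $G_s$, the rational structure $\mathfrak{g}_\Q$ coming from $\log\Gamma$ via BCH, and the principle that rational subgroups meet $\Gamma$ in lattices), and the sketch is essentially sound. The one place I would flag as needing more care in a full write-up is the final ``correction'' step for property~(4): after lifting a Mal'cev basis of $\bar G=G/G_s$ you want $\exp(X_j)\in\Gamma$ (not merely $\Gamma G_s$), and the adjustment by an element of $\mathfrak{g}_s$ must be done so that the resulting $X_j$ still lies in the correct ideal $\mathfrak{h}_{j-1}$ of the refined flag; this is fine because $\mathfrak{g}_s\subseteq\mathfrak{h}_{j-1}$ for all relevant $j$, but it should be said.
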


Let $g=\exp(t_1X_1) \cdots \exp(t_mX_m) \in G$ and denote
$\psi(g)=(t_1,\ldots,t_m) \in \R^{m}$. Let
$|\psi(g)|=\|\psi(g)\|_{\infty}$. Fix a Mal'cev basis $\mathcal{X}$.
In \cite{GT}, the authors introduced the following metric on $G$ and $G/\Gamma$.

\begin{defn}
Let $x,y \in G$ and define
$$
d(x,y)=\inf \left \{ \sum_{i=0}^{n-1} min(|\psi(x_{i-1}x_i^{-1})|, |\psi(x_{i}x_{i-1}^{-1})|) \ :
 \  n\in \N, \ x_0,\ldots ,x_n \in G, \ x_0=x, x_n=y \right \}
$$
which is right-invariant, i.e. $d(x,y)=d(xg,yg) \text{ for all } g \in G$ and
$d(x,y)\leq |\psi(xy^{-1})|$.

This metric induces a metric on $G / \Gamma$ that we also call $d(\cdot,\cdot)$ by setting:
\begin{align*}
d(x\Gamma,y\Gamma)&=\inf \{ d(x',y'):  x'\in x\Gamma ,y' \in y\Gamma \} \\
&= \inf\{d(x\gamma_1,y\gamma_2): \gamma_1,\gamma_2 \in \Gamma\}=\inf\{d(x,y\gamma): \gamma \in \Gamma\}
\end{align*}
where the last equality follows from the right-invariance of the metric.
\end{defn}

In the sequel we use some results obtained in \cite{GT} and we rephrase some others
in a convenient way. The Mal'cev basis $\mathcal{X}$ is fixed.

\begin{lem}[Multiplication and inversion] \label{coorpol}
Let $x,y\in G$, $t=\psi(x)$, $u=\psi(y)$. Then,

\begin{enumerate}

 \item

$\psi(xy)=(t_1+u_1,t_2+u_2+P_1(t_1,u_1),\ldots,t_m+u_m+P_{m-1}(t_1,\ldots,t_{m-1},u_1,\ldots,u_{m-1}))$
where for each $i\in\{1,\ldots,m-1\}$, $P_i$ is a real polynomial.

\item $\psi(x^{-1})=(-t_1,-t_2+Q_1(t_1),\ldots,-t_m+Q_{m-1}(t_1,\ldots,t_{m-1}))$
where for each $i\in\{1,\ldots,m\}$, $Q_i$ is a real polynomial.

\end{enumerate}

\end{lem}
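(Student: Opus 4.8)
The plan is to deduce both statements from two standard facts about a connected, simply connected nilpotent Lie group and then to read off the asserted ``triangular'' shape from the properties of the Mal'cev basis. Since $G$ is connected, simply connected and nilpotent, $\exp\colon\mathfrak g\to G$ is a diffeomorphism; identifying $\mathfrak g$ with $\R^m$ via $\mathcal X$, the group law pulled back through $\exp$, i.e. the Baker--Campbell--Hausdorff product $Z\star W$, is a polynomial map $\R^m\times\R^m\to\R^m$, because nilpotency truncates the BCH series to finitely many iterated brackets, and in these first--kind coordinates inversion is just $Z\mapsto -Z$. The map $\psi$ of the statement records coordinates of the second kind, and $\Psi^{-1}(t):=(t_1X_1)\star(t_2X_2)\star\cdots\star(t_mX_m)$ is visibly a composition of polynomial maps; once we know (see below) that $\Psi^{-1}$ is triangular and unipotent, it has a polynomial inverse $\Psi$, and $\psi=\Psi\circ\exp^{-1}$. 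Then
\[
\psi(xy)=\Psi\bigl(\Psi^{-1}(\psi x)\star\Psi^{-1}(\psi y)\bigr),\qquad \psi(x^{-1})=\Psi\bigl(-\Psi^{-1}(\psi x)\bigr),
\]
so both are polynomial in the input coordinates, and it only remains to identify which variables occur in each coordinate and to isolate the linear part.

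For this I would prove that the three building blocks $\star$, $\Psi$ and $\Psi^{-1}$ are all \emph{triangular and unipotent}: their $i$-th coordinate equals the $i$-th input variable plus a polynomial in the input variables of index strictly less than $i$. A composition of maps of this form has the same shape, so the two displayed formulas yield $\psi(xy)_i=t_i+u_i+P_{i-1}(t_1,\dots,t_{i-1},u_1,\dots,u_{i-1})$ and $\psi(x^{-1})_i=-t_i+Q_{i-1}(t_1,\dots,t_{i-1})$ with $Q_0\equiv 0$, which is exactly the claim. Triangularity is where the basis is used: each $\mathfrak h_j=\mathrm{Span}(X_{j+1},\dots,X_m)$ is an ideal, so $\exp(\mathfrak h_j)$ is normal and the projection onto the first $j$ coordinates intertwines the relevant group laws with the quotient $G/\exp(\mathfrak h_j)$, whose Mal'cev coordinates are precisely the first $j$ ones; hence coordinate $i$ involves only coordinates $\le i$. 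For the unipotent (linear) part the key point is that $[\mathfrak g,\mathfrak h_{i-1}]\subseteq\mathfrak h_i$: the subquotient $\mathfrak h_{i-1}/\mathfrak h_i$ is one--dimensional and $\mathfrak g$-invariant, so by Engel's theorem the $\mathfrak g$-action on it, being by nilpotent operators, is trivial. Consequently any iterated bracket occurring in $\star$, $\Psi$ or $\Psi^{-1}$ that involves $X_i$ lies in $\mathfrak h_i$ and contributes nothing to the $i$-th coordinate, leaving there only the additive terms $t_i$, $u_i$ or $-t_i$.

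Concretely I would make this rigorous by induction on $m$. The subgroup $\exp(\mathfrak h_{m-1})=\exp(\R X_m)$ is one--dimensional and, by the same Engel argument, central; the quotient $G/\exp(\R X_m)$ is again connected, simply connected and nilpotent of dimension $m-1$, and $X_1,\dots,X_{m-1}$ descend to a Mal'cev basis of it, so the inductive hypothesis settles coordinates $1,\dots,m-1$. For the last coordinate, writing $x=x'e^{t_mX_m}$ and $y=y'e^{u_mX_m}$ and using that $e^{t_mX_m},e^{u_mX_m}$ are central gives $xy=(x'y')e^{(t_m+u_m)X_m}$, hence $\psi(xy)_m=\psi(x'y')_m+t_m+u_m$ with $\psi(x'y')_m$ a polynomial in $t_1,\dots,t_{m-1},u_1,\dots,u_{m-1}$ by the first paragraph, and inversion is analogous. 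I expect the main obstacle to be bookkeeping rather than ideas: one must check that $\exp$ is compatible with the quotient by $\exp(\mathfrak h_j)$, that the Mal'cev basis genuinely descends, and that the ``brackets through $X_i$ vanish modulo $\mathfrak h_i$'' argument is applied to every term of the finite BCH expansion. The one input that is not purely formal is the inclusion $[\mathfrak g,\mathfrak h_{i-1}]\subseteq\mathfrak h_i$; this comes from nilpotency alone, and in particular the adaptedness of $\mathcal X$ to the lower central series is not needed for this lemma.
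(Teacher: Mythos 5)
The paper contains no proof of this lemma to compare against: it is stated as a rephrasing of results from Green--Tao \cite{GT} (``we use some results obtained in \cite{GT} and we rephrase some others''), so the authors simply cite it. Your argument is a correct, self-contained proof, and it is essentially the standard one underlying the cited result. The chain of reductions is sound: nilpotency truncates the Baker--Campbell--Hausdorff series, so the group law in exponential (first-kind) coordinates is polynomial and inversion is $Z\mapsto -Z$; the nested-ideal property of the $\mathfrak{h}_j$'s together with the inclusion $[\mathfrak{g},\mathfrak{h}_{i-1}]\subseteq\mathfrak{h}_i$ (which, as you note, needs only that $\mathrm{ad}_X$ is nilpotent and acts on the one-dimensional quotient $\mathfrak{h}_{i-1}/\mathfrak{h}_i$ --- Engel is overkill) makes the BCH product and the change of coordinates $\Psi^{-1}$ triangular and unipotent, whence $\Psi$ is polynomial of the same shape, and composing gives precisely $\psi(xy)_i=t_i+u_i+P_{i-1}(t_{<i},u_{<i})$ and $\psi(x^{-1})_i=-t_i+Q_{i-1}(t_{<i})$. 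The induction on $m$ through the central subgroup $\exp(\R X_m)$ is a clean, redundant way to organize the same bookkeeping (the quotient is again connected, simply connected and nilpotent and the basis descends), and your closing observation that only the ideal property of the spans $\mathfrak{h}_j$, not adaptation to the lower central series, is needed for this particular lemma is accurate.
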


We get easily that  $\psi(xy^{-1}) = (R_1(t,u),R_2(t,u),\dots,R_m(t,u))$
where $R_i$ is a real polynomial for each $i\in\{1,\ldots,m\}$. In
order to simplify notations in what follows we will write $P$ to refer to any
generic real polynomial with positive coefficients, not
necessarily the same. This will be clear from the context.

\begin{lem}[Coordinates polynomial bound] \label{coordinates_bound}
Let $x,y \in G$. Then,
  $$d(x,y)\leq  P(\max(|\psi(x)|,|\psi(y)|)) \ |\psi(x)-\psi(y)| $$

\end{lem}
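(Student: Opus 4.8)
The plan is to reduce the bound $d(x,y)\le P(\max(|\psi(x)|,|\psi(y)|))\,|\psi(x)-\psi(y)|$ to an estimate on $|\psi(xy^{-1})|$, using the right-invariance of $d$ and the basic inequality $d(x,y)\le|\psi(xy^{-1})|$. Write $t=\psi(x)$, $u=\psi(y)$. By Lemma \ref{coorpol} (together with the remark following it), $\psi(xy^{-1})=(R_1(t,u),\dots,R_m(t,u))$ where each $R_i$ is a real polynomial, so it suffices to show $|R_i(t,u)|\le P(\max(|t|,|u|))\,|t-u|$ for every $i$, with $P$ a polynomial with positive coefficients.

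The key structural observation is that $R_i(t,u)$ vanishes whenever $t=u$, because $xy^{-1}=e$ when $x=y$ and $\psi(e)=0$. Hence, viewing $R_i$ as a polynomial in the $2(m-1)$ (or $2m$) real variables $t_1,\dots,t_{m},u_1,\dots,u_{m}$ (actually only the first $i$ coordinates of each appear in $R_i$), the restriction to the diagonal $\{t=u\}$ is identically zero, so $R_i$ is divisible by... more precisely, I would argue coordinatewise: write
\begin{equation*}
R_i(t,u)-R_i(t,t)=\sum_{j=1}^{m}\bigl(R_i(t_1,\dots,t_j,u_{j+1},\dots)-R_i(t_1,\dots,t_{j-1},u_j,\dots)\bigr),
\end{equation*}
a telescoping sum in which the $j$-th term changes only the $j$-th $u$-coordinate from $u_j$ to $t_j$; each such term is therefore divisible by $(t_j-u_j)$ and, after factoring it out, what remains is a polynomial in $t,u$, hence bounded in absolute value by $P(\max(|t|,|u|))$ on the region where all coordinates are controlled by $\max(|t|,|u|)$. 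Summing over $j$ and using $|t_j-u_j|\le|t-u|=\|t-u\|_\infty$ gives $|R_i(t,u)|=|R_i(t,u)-R_i(t,t)|\le P(\max(|t|,|u|))\,|t-u|$, since $R_i(t,t)=0$. Taking the maximum over $i\in\{1,\dots,m\}$ and absorbing constants into a single polynomial with positive coefficients yields $|\psi(xy^{-1})|\le P(\max(|\psi(x)|,|\psi(y)|))\,|\psi(x)-\psi(y)|$, and then $d(x,y)\le|\psi(xy^{-1})|$ finishes the proof.

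The main obstacle is the bookkeeping in the telescoping/divisibility step: one must be careful that after factoring $(t_j-u_j)$ out of each difference the leftover really is a polynomial (this is the elementary fact that $p(a)-p(b)$ is divisible by $a-b$ in $\R[a,b]$, applied one variable at a time), and that all the polynomials $P_i,Q_i$ from Lemma \ref{coorpol} have positive coefficients or can be dominated by polynomials with positive coefficients so that the bound is genuinely monotone in $\max(|t|,|u|)$. Everything else — right-invariance, the inequality $d\le|\psi(xy^{-1})|$, and the vanishing on the diagonal — is immediate from the definitions and from Lemma \ref{coorpol}. I would present the argument for a single $R_i$ in detail and remark that the general case is identical.
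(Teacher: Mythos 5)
Your proposal is correct and follows essentially the same route as the paper: bound $d(x,y)$ by $|\psi(xy^{-1})|$, note that each coordinate polynomial $R_i$ vanishes on the diagonal $t=u$, and factor out the coordinate differences $(t_j-u_j)$ to extract the factor $|\psi(x)-\psi(y)|$ with a polynomial remainder bounded by $P(\max(|\psi(x)|,|\psi(y)|))$. The only cosmetic difference is that the paper performs the factorization monomial by monomial (writing $u^{\vec\beta}-t^{\vec\beta}=\sum_j (u_j-t_j)W_{\vec\beta,j}(t,u)$) whereas you telescope over the variables of the whole polynomial, which amounts to the same elementary divisibility fact.
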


\begin{proof}

Let $\psi(x)=t$ and $\psi(y)=u$. From the definition of the distance and
Lemma \ref{coorpol} we have,
$$d(x,y)\leq |\psi(xy^{-1})|=|(R_1(t,u),R_2(t,u),\ldots,R_m(t,u))|.$$
Write $R_i(t,u)=\sum\limits_{\vec{\alpha},\vec{\beta}}
C_{\vec{\alpha},\vec{\beta}}^{(i)}t^{\vec{\alpha}}u^{\vec{\beta}}$.
Since $R_i(t,t)=0$, one deduces
$$R_i(t,u)=R_i(t,u)-R_i(t,t) =\sum\limits_{\vec{\alpha},\vec{\beta}} C_{\vec{\alpha},
\vec{\beta}}^{(i)} t^{\vec{\alpha}}(u^{\vec{\beta}}-t^{\vec{\beta}})$$
Expanding  $(u^{\vec{\beta}}-t^{\vec{\beta}})=\sum_{i=1}^m
(u_i-t_i)W_{\vec{\beta},i}(t,u)$, where $W_{\vec{\beta},i}$ are
polynomials, we get,
$$d(x,y)\leq |\psi(x)-\psi(y)|\sum\limits_{\vec{\alpha},\vec{\beta}}
\sum\limits_{i=1}^m |C_{\vec{\alpha},\vec{\beta}}^{(i)}||t^{\vec{\alpha}}
||W_{\vec{\beta},i}(t,u)|\leq |\psi(x)-\psi(y)| \ P(max(|\psi(x)|,|\psi(y)|)).$$
\end{proof}

\begin{lem} \label{polybound left}
Let $g,x,y\in G$, then
$$d(gx,gy)\leq P(|\psi(g)|,|\psi(x)|,|\psi(y)|) \ d(x,y)$$
\end{lem}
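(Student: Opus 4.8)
The plan is to combine the right-invariance of $d$ with the fact that $d$ is defined as an infimum over finite chains in $G$, reducing the statement to a single \emph{conjugation estimate}. I may assume $x\neq y$, since otherwise $gx=gy$ and there is nothing to prove. I would fix an arbitrary chain $x=x_0,x_1,\ldots,x_n=y$ in $G$ with $\sum_{i=1}^n s_i\le 2\,d(x,y)$, where $s_i=\min(|\psi(x_{i-1}x_i^{-1})|,|\psi(x_ix_{i-1}^{-1})|)$; such a chain exists because $d(x,y)>0$ and $d(x,y)$ is an infimum of such sums. Left-translating by $g$ produces the chain $gx=gx_0,gx_1,\ldots,gx_n=gy$, whose $i$-th increment is $\min(|\psi(g\,x_{i-1}x_i^{-1}\,g^{-1})|,|\psi(g\,x_ix_{i-1}^{-1}\,g^{-1})|)$. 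Feeding this chain into the definition of $d(gx,gy)$ bounds it by the sum of these increments, so everything reduces to controlling $|\psi(ghg^{-1})|$ for $h\in G$.

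\textbf{Conjugation estimate.} The claim is that for $h\in G$, writing $s=\min(|\psi(h)|,|\psi(h^{-1})|)$, one has $\min(|\psi(ghg^{-1})|,|\psi(gh^{-1}g^{-1})|)\le s\,P(|\psi(g)|,s)$. To see this, compose the formulas of Lemma \ref{coorpol}: $\psi(gh)$ is polynomial in $\psi(g),\psi(h)$ and $\psi(g^{-1})$ is polynomial in $\psi(g)$, hence $\psi(ghg^{-1})=F(\psi(g),\psi(h))$ for a polynomial map $F\colon\R^m\times\R^m\to\R^m$. Since $geg^{-1}=e$ and $\psi$ is a bijection with $\psi(e)=0$, we have $F(\,\cdot\,,0)\equiv 0$; therefore each coordinate factors as $F_k(\vec a,\vec b)=\sum_{j=1}^m b_j\,G_{k,j}(\vec a,\vec b)$ with the $G_{k,j}$ polynomial, and using $|\psi(g)_i|\le|\psi(g)|$ one obtains $|\psi(ghg^{-1})|\le |\psi(h)|\,P(|\psi(g)|,|\psi(h)|)$ for a suitable polynomial $P$ with nonnegative coefficients. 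Applying this to $h$ (when $s=|\psi(h)|$) or to $h^{-1}$ (when $s=|\psi(h^{-1})|$), and noting $gh^{-1}g^{-1}=(ghg^{-1})^{-1}$, yields the claim.

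The two estimates then combine. With $h=x_{i-1}x_i^{-1}$ the quantity $s$ above is exactly $s_i$, so the $i$-th increment of the translated chain is at most $s_i\,P(|\psi(g)|,s_i)$. From Lemma \ref{coordinates_bound} together with $|\psi(x)-\psi(y)|\le|\psi(x)|+|\psi(y)|$ I get a fixed polynomial $P_0$ with $d(x,y)\le P_0(|\psi(x)|,|\psi(y)|)$, hence $s_i\le 2\,d(x,y)\le 2\,P_0(|\psi(x)|,|\psi(y)|)$; monotonicity of $P$ lets me replace $P(|\psi(g)|,s_i)$ by $P(|\psi(g)|,2P_0(|\psi(x)|,|\psi(y)|))$, which is a polynomial in $|\psi(g)|,|\psi(x)|,|\psi(y)|$. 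Summing over $i$ and using $\sum_i s_i\le 2\,d(x,y)$ gives
$$d(gx,gy)\le 2\,P\bigl(|\psi(g)|,\,2P_0(|\psi(x)|,|\psi(y)|)\bigr)\,d(x,y),$$
which has the required form; the corresponding bound on $G/\Gamma$ is then immediate from the definition of the quotient metric.

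\textbf{Main obstacle.} The delicate point is not any individual inequality but the bookkeeping: the intermediate points $x_i$ of a near-optimal chain are completely uncontrolled — they may wander arbitrarily far from $x$ and $y$ — so the final bound must be expressed purely in terms of the increments $s_i$. This is precisely why the \emph{vanishing} of the conjugation estimate at $h=e$ (equivalently, the factorization $F(\,\cdot\,,0)\equiv 0$) is essential: the cruder bound $|\psi(ghg^{-1})|\le P(|\psi(g)|,|\psi(h)|)$ would be useless, because $|\psi(h)|=|\psi(x_{i-1}x_i^{-1})|$ need not be small even when $s_i$ is. Some care is also needed to keep all the generic polynomials $P$ with nonnegative coefficients, so that the monotonicity substitutions are legitimate.
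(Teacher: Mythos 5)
Your proof is correct and follows essentially the same route as the paper: translate a near-optimal chain by $g$ and control each increment with the conjugation estimate $|\psi(gzg^{-1})|\le |\psi(z)|\,P(|\psi(g)|,|\psi(z)|)$ (which vanishes at $z=e$), using the a priori bound $d(x,y)\le P_0(|\psi(x)|,|\psi(y)|)$ from Lemma \ref{coordinates_bound} to keep the increments polynomially controlled before summing. The only difference is minor bookkeeping: you bound the minimum of the two conjugated increments by applying the estimate to whichever of $h,h^{-1}$ realizes $s_i$, whereas the paper first observes that a bound on $\min(|\psi(h)|,|\psi(h^{-1})|)$ forces one on the maximum.
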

\begin{proof}
Let $g,z \in G$. From Lemma \ref{coorpol} we see that
$\psi(gzg^{-1})$ is a polynomial function of $\psi(z)$ and $\psi(g)$
that vanishes when $\psi(z)=0$. This this polynomial function can be written as
$\psi(z)P(\psi(g),\psi(z))$. Then,
\begin{equation} \label{ec1}
|\psi(gzg^{-1})|\leq |\psi(z)| \ P(|\psi(z)|,|\psi(g)|)
\end{equation}

By Lemma \ref{coordinates_bound}, $d(x,y)\leq
P(|\psi(x)|,|\psi(y)|)$, and then for computing the distance between
$x$ and $y$ we can restrict to paths $x=x_0, \ldots , x_n=y$ satisfying
$k(x_i,x_{i+1})\leq P(|\psi(x)|,|\psi(y)|)$ where
$k(x_i,x_{i+1})=\min(|\psi(x_{i}x_{i+1}^{-1})|,|\psi(x_{i+1}x_{i}^{-1})|)$.
Let us observe (using Lemma \ref{coorpol}) that this property
implies that
$\max(|\psi(x_{i}x_{i+1}^{-1})|,|\psi(x_{i+1}x_{i}^{-1})|)\leq
P(|\psi(x)|,|\psi(y)|).$

Consider $x=x_0,\ldots,x_n=y$ such a path and the path $gx=gx_0,
gx_1,\ldots,gx_n=gy$. From \eqref{ec1},
\begin{align*}
k(gx_i,gx_{i+1})&\leq |\psi(gx_ix_{i+1}^{-1}g^{-1})|\\
&\leq|\psi(x_ix_{i+1}^{-1})|P(|\psi(x_ix_{i+1}^{-1})|,|\psi(g)|) \\
&\leq|\psi(x_ix_{i+1}^{-1})|P(|\psi(x)|,|\psi(y)|,|\psi(g)|)
\end{align*}

In the same way, $$|\psi(gx_{i+1}x_{i}^{-1}g^{-1})|\leq
|\psi(x_{i+1}x_{i}^{-1})|P(|\psi(x)|,|\psi(y)|,|\psi(g)|)$$ and then
$$k(gx_i,gx_{i+1})\leq
P(|\psi(x)|,|\psi(y)|,|\psi(g)|)|k(x_i,x_{i+1})|$$ We conclude,
\begin{align*}
d(gx,gy)&\leq k(gx_0,gx_1)+\ldots+k(gx_{n-1},gx_n) \\
&\leq P(|\psi(x)|,|\psi(y)|,|\psi(g)|)(k(x_0,x_1)+\ldots+k(x_{n-1},x_n))
\end{align*}
and the lemma follows taking the infimum.
\end{proof}

\begin{lem} \label{acotpol}
Let $g\in G$ and $n\in \N$, then $|\psi(g^n)|\leq P(n)$, where $P$ is a polynomial
with coefficients depending on $|\psi(g)|$.
\end{lem}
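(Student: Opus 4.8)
The plan is to control each Mal'cev coordinate of $g^n$ separately, using the recursion $g^{n+1}=g^n g$ together with the explicit polynomial multiplication law of Lemma \ref{coorpol}. Write $\psi(g)=(s_1,\ldots,s_m)$ and $\psi(g^n)=(t_1^{(n)},\ldots,t_m^{(n)})$. Applying Lemma \ref{coorpol}(1) to the product $g^n\cdot g$ gives, for each $i\in\{1,\ldots,m\}$,
\[
t_i^{(n+1)}=t_i^{(n)}+s_i+P_{i-1}\bigl(t_1^{(n)},\ldots,t_{i-1}^{(n)},s_1,\ldots,s_{i-1}\bigr),
\]
with the convention $P_0\equiv 0$, where the $P_{i-1}$ are the fixed real polynomials furnished by the Mal'cev basis. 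In particular $t_1^{(n)}=n s_1$ is already polynomial in $n$ with a coefficient depending only on $|\psi(g)|$, which is the base case.

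The key step is then an induction on the coordinate index $i$. Assuming that for every $j<i$ there is a polynomial $Q_j$, with coefficients depending on $|\psi(g)|$, such that $|t_j^{(k)}|\le Q_j(k)$ for all $k$, I would telescope the recursion:
\[
t_i^{(n)}=s_i+\sum_{k=1}^{n-1}\Bigl(s_i+P_{i-1}\bigl(t_1^{(k)},\ldots,t_{i-1}^{(k)},s_1,\ldots,s_{i-1}\bigr)\Bigr).
\]
Substituting the inductive bounds $|t_j^{(k)}|\le Q_j(k)\le Q_j(n)$ (valid since $k\le n$) into the fixed polynomial $P_{i-1}$ shows that each summand is bounded in absolute value by a polynomial in $n$; summing at most $n$ such terms keeps the bound polynomial, so one obtains a polynomial $Q_i$ (coefficients depending on $|\psi(g)|$) with $|t_i^{(n)}|\le Q_i(n)$. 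Setting $P(n)=\sum_{i=1}^m Q_i(n)$ — a polynomial with positive coefficients, in line with the conventions fixed before Lemma \ref{coordinates_bound} — yields $|\psi(g^n)|=\max_i|t_i^{(n)}|\le P(n)$, as claimed.

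I do not expect a genuine obstacle here: the argument is pure bookkeeping, and the only point that deserves a line of justification is that composing the inductive polynomial bounds with the fixed $P_{i-1}$ and then summing over $k\le n$ indeed produces a polynomial in $n$ of finite degree — which is immediate, since composition and finite summation of polynomials are polynomials. This lemma is the last quantitative ingredient needed before combining it with Lemmas \ref{coordinates_bound} and \ref{polybound left} to bound the displacement $d(g^n x,g^n y)$ and hence the topological complexity of a nilsystem.
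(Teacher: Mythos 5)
Your argument is correct and is essentially the paper's own proof: both proceed by induction on the Mal'cev coordinate index, use the polynomial multiplication law of Lemma \ref{coorpol}(1) applied to $g^{n+1}=g^n g$ to bound the increment of the $i$-th coordinate by a polynomial in $n$ (given polynomial control of the earlier coordinates), and then telescope over $n$ steps. Your version just makes the telescoping sum and the passage to polynomials with positive coefficients slightly more explicit than the paper does.
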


\begin{proof}
By the multiplication formula we observe that
$(\psi(g^n))_1=n\psi(g)_1$, i.e. the first coordinate is controlled
polynomially. Suppose now that the $i$-th coordinate is controlled
polynomially, then the same happens with the $i+1$-th coordinate. In
fact, we see inductively that,
$$\psi(g^{n+1})_{i+1}=\psi(g^ng)_{i+1}=\psi(g^n)_{i+1}+\psi(g)_{i+1}+P
(\psi(g^n)_1,\ldots,\psi(g^n)_i,\psi(g)_1,\ldots,\psi(g)_i)$$ and
then $|\psi(g^{n+1})_{i+1}|-|\psi(g^n)_{i+1}|\leq P(n),$ and we
conclude $|\psi(g^{n+1})_{i+1}|\leq (n+1)P(n+1)$. (Here all
polynomials $P$ are not necessarily the same.)
\end{proof}

\begin{lem}[Factorization]
Each $g\in G$ can be written in a unique way as $g=\{g\}[g]$ with
$\psi(\{g\})\in [0,1)^m$ and $[g]\in G$.
\end{lem}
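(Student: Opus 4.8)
The plan is to construct the decomposition one coordinate at a time, using the triangular structure of the multiplication law in Lemma~\ref{coorpol}. Write $\psi(g)=a=(a_1,\dots,a_m)$. I would look for $[g]=\gamma\in\Gamma$, which is necessarily of the form $\gamma=\exp(n_1X_1)\cdots\exp(n_mX_m)$ with $n=(n_1,\dots,n_m)\in\Z^m$ and hence $\psi(\gamma)=n$, such that $\{g\}:=g\gamma^{-1}$ satisfies $\psi(\{g\})=s=(s_1,\dots,s_m)\in[0,1)^m$. Since $g=\{g\}\gamma$, Lemma~\ref{coorpol}(1), applied with first factor $\{g\}$ and second factor $\gamma$, yields
\[
a_i=s_i+n_i+P_{i-1}(s_1,\dots,s_{i-1},n_1,\dots,n_{i-1}),\qquad 1\le i\le m,
\]
with the convention $P_0\equiv 0$. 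So the task reduces to solving this system for $s\in[0,1)^m$ and $n\in\Z^m$.

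I would then solve it by forward substitution, which works precisely because the $i$-th equation involves only $s_j,n_j$ with $j\le i$. For $i=1$ the relation $a_1=s_1+n_1$ together with $s_1\in[0,1)$ forces $n_1=\lfloor a_1\rfloor$ and $s_1=a_1-\lfloor a_1\rfloor$, with no other admissible choice. Inductively, once $s_1,\dots,s_{i-1}$ and $n_1,\dots,n_{i-1}$ have been (uniquely) determined, the number $c:=P_{i-1}(s_1,\dots,s_{i-1},n_1,\dots,n_{i-1})$ is fixed, and the $i$-th equation becomes $a_i-c=s_i+n_i$, which forces $n_i=\lfloor a_i-c\rfloor$ and $s_i=(a_i-c)-\lfloor a_i-c\rfloor$, again uniquely. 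After $m$ steps this produces the unique $n\in\Z^m$ (hence the unique $\gamma=[g]\in\Gamma$) and the unique $s\in[0,1)^m$ (hence the unique $\{g\}=g[g]^{-1}$ with $\psi(\{g\})\in[0,1)^m$), which gives both existence and uniqueness of $g=\{g\}[g]$.

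I do not expect a genuine obstacle here; the argument is essentially bookkeeping. The one feature that must be invoked honestly is exactly the ``lower-triangular'' dependence in Lemma~\ref{coorpol}(1) --- that the polynomial correction $P_{i-1}$ in the $i$-th coordinate of a product depends only on the first $i-1$ coordinates of the two factors. This is what makes the forward substitution, and with it the uniqueness, go through; without it one would be forced to argue via the normal subgroups $\exp(\mathfrak h_j)$ and an induction on $\dim G$, which is possible but longer.
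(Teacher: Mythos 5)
Your argument is correct. The paper itself gives no proof of this lemma --- it is one of the facts imported (and rephrased) from \cite{GT} --- and your coordinate-wise forward substitution is essentially the standard argument there: the lower-triangular form of the multiplication law in Lemma~\ref{coorpol}(1), together with the Mal'cev property that $\psi$ is a bijection of $G$ onto $\R^m$ carrying $\Gamma$ exactly onto $\Z^m$, reduces the claim to uniquely solving $a_i=s_i+n_i+P_{i-1}(s_1,\dots,s_{i-1},n_1,\dots,n_{i-1})$ with $s_i\in[0,1)$, $n_i\in\Z$, which your induction does. One small but worthwhile point: you implicitly (and correctly) read the conclusion as $[g]\in\Gamma$; as literally printed, ``$[g]\in G$'' makes uniqueness false (take $\{g\}=e$, $[g]=g$), so the statement should be understood with $[g]\in\Gamma$, which is also what the subsequent reduction to $|\psi(x)|\leq 1$ when writing $x\Gamma$ requires, and your proof is exactly a proof of that corrected statement.
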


Therefore, when writing $x\Gamma$ we can assume $x$ is such that $|\psi(x)|\leq 1$.

\begin{lem} \label{bound gamma}
Let $x,y \in G$. Then,
$$d(x\Gamma,y\Gamma)=\inf_{\gamma \in \Gamma} d(x,y\gamma)=\inf_{\gamma \in \Gamma,
|\psi(\gamma)|\leq C} d(x,y\gamma)$$
where $C$ is a constant depending only on $|\psi(x)|$ and $|\psi(y)|$.
\end{lem}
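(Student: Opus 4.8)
### Proof Proposal for Lemma \ref{bound gamma}

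The plan is to show that the infimum defining $d(x\Gamma, y\Gamma)$ is already realized, up to a bounded error, by lattice elements $\gamma$ whose Mal'cev coordinates are controlled by $|\psi(x)|$ and $|\psi(y)|$. The first inequality, $d(x\Gamma, y\Gamma) = \inf_{\gamma \in \Gamma} d(x, y\gamma)$, is just the definition recalled in the displayed formula for the quotient metric, so the only real content is that the infimum over all of $\Gamma$ equals the infimum over $\{\gamma \in \Gamma : |\psi(\gamma)| \le C\}$ for a suitable $C = C(|\psi(x)|, |\psi(y)|)$.

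First I would fix a trivial upper bound for the quantity we are computing: taking $\gamma = e$ gives $d(x\Gamma, y\Gamma) \le d(x,y) \le |\psi(xy^{-1})| \le P(\max(|\psi(x)|,|\psi(y)|))$ by the definition of $d$ and Lemma \ref{coorpol}, so in searching for a near-optimal $\gamma$ we only ever need to consider those $\gamma$ with $d(x, y\gamma) \le P(\max(|\psi(x)|,|\psi(y)|)) + 1 =: D$, a bound depending only on $|\psi(x)|,|\psi(y)|$. The goal is then to translate a bound on $d(x, y\gamma)$ into a bound on $|\psi(\gamma)|$. I would do this in two steps. By right-invariance, $d(x, y\gamma) = d(xy^{-1}, \gamma)$, so it suffices to show that the ball of radius $D$ in $(G,d)$ around the fixed point $z_0 := xy^{-1}$ meets only finitely many lattice points, with an explicit bound on their coordinates. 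Since $d$ is only right-invariant, I would pass through the identity: $d(z_0, \gamma) \le D$ together with the triangle inequality and $d(e, z_0) = d(e, xy^{-1}) \le |\psi(xy^{-1})| \le P(\max(|\psi(x)|,|\psi(y)|))$ gives $d(e, \gamma) \le D' := D + P(\max(|\psi(x)|,|\psi(y)|))$, again a constant depending only on $|\psi(x)|, |\psi(y)|$.

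The main obstacle is the last step: deducing $|\psi(\gamma)| \le C$ from $d(e,\gamma) \le D'$. This is exactly the statement that the metric $d$ is proper on $G/\Gamma$ — equivalently that $\psi$ is bounded on $d$-balls — and it is a standard fact about the Green--Tao metric that I would either cite from \cite{GT} or argue as follows. A path $e = x_0, x_1, \ldots, x_n = \gamma$ realizing $d(e,\gamma)$ up to a factor $2$ can be taken with each increment $k(x_i,x_{i+1}) = \min(|\psi(x_ix_{i+1}^{-1})|, |\psi(x_{i+1}x_i^{-1})|) \le D'$; since $d(e,\gamma) \ge$ (word length of $\gamma$ with respect to generators of bounded $\psi$-norm) up to constants, and since a word of length $\ell$ in such generators has $\psi$-coordinates bounded by a polynomial in $\ell$ (by iterating the multiplication formula of Lemma \ref{coorpol}, as in Lemma \ref{acotpol}), one gets $|\psi(\gamma)| \le P(D')$. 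Wrapping up: set $C := P(D')$, a constant depending only on $|\psi(x)|$ and $|\psi(y)|$; then every $\gamma$ with $d(x,y\gamma) \le D$ satisfies $|\psi(\gamma)| \le C$, and since the near-optimal $\gamma$'s all lie in this range, $\inf_{\gamma \in \Gamma} d(x,y\gamma) = \inf_{\gamma \in \Gamma,\, |\psi(\gamma)| \le C} d(x,y\gamma)$, which is the claim. I would also note that combined with the Factorization Lemma we may reduce to $|\psi(x)|, |\psi(y)| \le 1$, in which case $C$ becomes an absolute constant depending only on $G/\Gamma$ and the chosen Mal'cev basis.
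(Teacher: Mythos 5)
The overall strategy is the right one (and is how this is handled in \cite{GT}): restrict to $\gamma$ for which $d(x,y\gamma)$ does not exceed the trivial bound $d(x,y)\le |\psi(xy^{-1})|\le P(\max(|\psi(x)|,|\psi(y)|))$, and then convert a distance bound into a bound on $|\psi(\gamma)|$ via properness of the Green--Tao metric. But the pivot of your reduction is false: the metric is only right-invariant, and right-invariance gives $d(x,y\gamma)=d(x\gamma^{-1},y)$ (or $d(x\gamma^{-1}y^{-1},e)$), \emph{not} $d(x,y\gamma)=d(xy^{-1},\gamma)$; the latter would require moving the factor $y$ past $\gamma$, i.e.\ left-invariance or commutativity, neither of which you have in a nonabelian $G$. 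So the claim that every near-optimal $\gamma$ lies in a $d$-ball of radius $D$ around $z_0=xy^{-1}$, and hence in a bounded $d$-ball around $e$, does not follow as written. The repair is easy and worth making explicit: by the triangle inequality $d(e,y\gamma)\le d(e,x)+d(x,y\gamma)\le |\psi(x)|+D$, so properness of $d$ at the identity bounds $|\psi(y\gamma)|$ by a quantity depending only on $|\psi(x)|,|\psi(y)|$, and then $\gamma=y^{-1}(y\gamma)$ together with the polynomial multiplication and inversion formulas of Lemma \ref{coorpol} bounds $|\psi(\gamma)|$, which is what the lemma asserts.

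Second, your justification of the properness step ($d(e,g)\le D'$ implies $|\psi(g)|\le P(D')$) is itself only a sketch and, as phrased, assumes the hard direction. A near-optimal path from $e$ to $\gamma$ consists of possibly very many increments of very small $\psi$-norm, so the inequality ``$d(e,\gamma)$ dominates the word length of $\gamma$ in generators of bounded $\psi$-norm'' is exactly the nontrivial lower bound on $d$ that needs proving (naively iterating Lemma \ref{coorpol} along the path gives a polynomial, hence potentially blowing-up, recursion, so a Gronwall-type argument does not close without using the nilpotent filtration carefully, in the spirit of Lemma \ref{acotpol} but for products of distinct small elements). Since this comparison between $d$ and the coordinate norm is proved in the appendix of \cite{GT}, the clean course is to cite it there rather than to assert it; with that citation and the right-invariance step corrected as above, your argument goes through.
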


Combining the last two lemmas we see that:

$$d(x\Gamma,y\Gamma)=\inf_{\gamma \in \Gamma, |\psi(\gamma)|\leq C} d(x,y\gamma)$$
where $C$ is a constant.\\

We obtain,
$$d(g^nx\Gamma,g^ny\Gamma)\leq \inf_{\gamma \in \Gamma, |\psi(\gamma)|\leq C} d(g^nx,g^ny\gamma)$$

Using Lemmas \ref{polybound left}, \ref{acotpol} and \ref{bound gamma} we get,

\begin{cor}
 Let $x,y,g\in G$ with $|\psi(x)|\leq 1$ and $|\psi(y)|\leq 1$. Then,

\begin{equation}\label{eq:nildynamic}
d(g^nx\Gamma,g^ny\Gamma)\leq \inf_{\gamma \in \Gamma,
|\psi(\gamma)|\leq C}P(|\psi(g^n)|) \ d(x,y\gamma)\leq
P(n) \ d(x\Gamma,y\Gamma)
\end{equation}
\end{cor}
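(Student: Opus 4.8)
The plan is to derive the Corollary purely by chaining together Lemmas \ref{polybound left}, \ref{acotpol} and \ref{bound gamma}; the only thing requiring care is tracking which quantities the various generic polynomials $P$ are permitted to depend on. The normalization $|\psi(x)|\le 1$, $|\psi(y)|\le 1$ is already assumed (and, by the Factorization Lemma, it costs nothing once one only cares about the cosets $x\Gamma$, $y\Gamma$), so the constant $C$ provided by Lemma \ref{bound gamma} for the pair $(x,y)$ is an honest constant and
$$d(x\Gamma,y\Gamma)=\inf_{\gamma\in\Gamma,\ |\psi(\gamma)|\le C}d(x,y\gamma).$$
Since restricting an infimum to a subset can only increase it, for this same $C$ one also has $d(g^nx\Gamma,g^ny\Gamma)=\inf_{\gamma\in\Gamma}d(g^nx,g^ny\gamma)\le\inf_{\gamma\in\Gamma,\ |\psi(\gamma)|\le C}d(g^nx,g^ny\gamma)$.

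Next I would estimate each summand for a fixed $\gamma$ with $|\psi(\gamma)|\le C$. By Lemma \ref{polybound left} applied to the triple $(g^n,x,y\gamma)$,
$$d(g^nx,g^ny\gamma)\le P(|\psi(g^n)|,|\psi(x)|,|\psi(y\gamma)|)\,d(x,y\gamma).$$
Of the three arguments, $|\psi(x)|\le 1$ by hypothesis, and $|\psi(y\gamma)|$ is bounded by a constant uniformly over $\gamma$ with $|\psi(\gamma)|\le C$, because by the multiplication formula of Lemma \ref{coorpol} the coordinates of $y\gamma$ are polynomials in the (bounded) coordinates of $y$ and $\gamma$. Absorbing these bounds into the coefficients, the estimate becomes $d(g^nx,g^ny\gamma)\le P(|\psi(g^n)|)\,d(x,y\gamma)$ with $P$ now depending only on the fixed data ($G$, $\Gamma$, the Mal'cev basis, and $|\psi(g)|$). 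Lemma \ref{acotpol} gives $|\psi(g^n)|\le P(n)$, and since a polynomial of a polynomial is again a polynomial, $P(|\psi(g^n)|)\le P(n)$.

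Finally I take the infimum over $\gamma\in\Gamma$ with $|\psi(\gamma)|\le C$: the factor $P(|\psi(g^n)|)$ is independent of $\gamma$, so it pulls out, and what remains is exactly $d(x\Gamma,y\Gamma)$ by the first displayed identity. Combining everything yields
$$d(g^nx\Gamma,g^ny\Gamma)\le\inf_{\gamma\in\Gamma,\ |\psi(\gamma)|\le C}P(|\psi(g^n)|)\,d(x,y\gamma)=P(|\psi(g^n)|)\,d(x\Gamma,y\Gamma)\le P(n)\,d(x\Gamma,y\Gamma),$$
which is precisely \eqref{eq:nildynamic}. The only genuine obstacle is the one flagged above: making sure that every polynomial surviving to the last line depends on $n$ and fixed structural data alone, and this is exactly what the normalization of $x$ and $y$, together with Lemma \ref{bound gamma} (restricting $\gamma$) and the multiplication formula, guarantee.
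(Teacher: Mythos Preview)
Your proposal is correct and follows essentially the same route as the paper: the paper's proof amounts to the two displayed lines preceding the corollary together with the sentence ``Using Lemmas \ref{polybound left}, \ref{acotpol} and \ref{bound gamma} we get,'' and your argument is precisely a careful unpacking of that chain, including the observation that $|\psi(y\gamma)|$ is uniformly bounded via Lemma \ref{coorpol} once $|\psi(y)|\le 1$ and $|\psi(\gamma)|\le C$. There is no genuine difference in approach.
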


\subsection{Complexity computation}
Let $(X,T)$ be a t.d.s. We say that a subset $F\subseteq X$
is $(n,\epsilon)$-shadowing if for all $x \in X$ there exists $y \in F$
such that $d(T^ix,T^iy)\leq \epsilon$ for all $i\in \{0,\ldots,n\}$.
Write $r(n,\epsilon)=\min \{ |F|:F\subseteq X, F \textrm{ is }
(n,\epsilon)-\textrm{shadowing} \}$. Given an open cover
$\U=\{U_1,\ldots, U_k\}$ the topological complexity function of
$\mathcal{U}$ is the sequence on $n$: $c(\mathcal{U},n)=\min\{ M\geq 1:
\exists V_1,\ldots, V_M\in
\bigvee_{i=0}^{n}T^{-i}\mathcal{U},\textrm{ such that }
\bigcup\limits_{i=1}^M V_i=X\}$. One has that,
$$c(\mathcal{U},n)\leq r(n,\frac{\delta}{2})$$
where $\delta>0$ is the Lebesgue number of $\mathcal{U}$.

To compute the topological complexity of a nilsystem
in the general case (i.e. when $G$ is not neccesarily a connected and
simply connected Lie group) we will use an argument given by A.Leibman
in \cite{Le}. For that we require an extra definition and one lemma from
\cite{Le}.

Let $X=G/\Gamma$ be a nilmanifold and $T:X\to X$ the transformation
given by $x\to gx$ with a fixed $g\in G$.

\begin{defn}
We say that a closed subset $Y \subset X$ is a submanifold of $X$ if
$Y=Hx$ where $H$ is a closed subgroup of $G$ and $x \in X$.
\end{defn}

\begin{lem}\label{lem:reduction}
There exists a connected, simply connected nilpotent Lie group
$\widehat{G}$ and $\widehat{\Gamma}\subseteq \widehat{G}$ a co-compact
subgroup such that $X$ with the action of $G$ is isomorphic to a
submanifold $\widetilde{X}$ of
$\widehat{X}=\widehat{G}/\widehat{\Gamma}$ representing the action
of $G$ in $\widehat{G}$.
\end{lem}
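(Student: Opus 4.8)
The plan is to follow Leibman's reduction from \cite{Le}, which \emph{unfolds} the disconnectedness of $G$ into a continuous direction. Let $G^{0}$ be the identity component of $G$. I would first reduce to the case where $G$ is generated by $G^{0}$ and $g$: replacing $G$ by $\langle G^{0},g\rangle$ only shrinks the acting group, and since only the dynamics of $T$ (translation by $g$) matters and orbit closures in nilsystems are again sub-nilmanifolds, this is harmless (for a minimal system the smaller group still acts transitively). Then $G/G^{0}=\langle\bar g\rangle$ is cyclic. Next I would pass from $G^{0}$ to its universal cover $\widetilde{G^{0}}$, which is connected, simply connected and nilpotent; the preimage $p^{-1}(\Gamma\cap G^{0})$ is discrete and cocompact and $\widetilde{G^{0}}/p^{-1}(\Gamma\cap G^{0})\cong G^{0}/(\Gamma\cap G^{0})$, so the nilmanifold does not change. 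Hence one may assume $G^{0}$ is connected and simply connected, and then conjugation by $g$ restricts to a unipotent automorphism $\theta$ of $G^{0}$ (unipotence because $G$ is nilpotent), so $\theta=\exp D$ for a nilpotent derivation $D$ of $\mathrm{Lie}(G^{0})$.

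The core construction thickens the cyclic quotient into a one-parameter subgroup. Let $\psi(t)=\exp(tD)\in\mathrm{Aut}(G^{0})$, so $\psi(1)=\theta$, and set $\widehat G=G^{0}\rtimes_{\psi}\mathbb{R}$; this is connected, simply connected and nilpotent. If $\bar g$ has infinite order, then $G\cong G^{0}\rtimes_{\theta}\mathbb{Z}$ sits inside $\widehat G$ and I take $\widehat g$ to be the image of $g$; if $\bar g$ has finite order $k$ (so $g^{k}\in G^{0}$), then using that $\exp$ is a diffeomorphism of the simply connected $G^{0}$ and that $\theta$ fixes $g^{k}$, one verifies that $\widehat g:=\bigl(\exp(\tfrac1k\log g^{k}),\tfrac1k\bigr)\in\widehat G$ has $\widehat g^{k}=(g^{k},1)$ and conjugates $G^{0}$ by a unipotent automorphism. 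In either case I would invoke Mal'cev's rationality criterion to produce a discrete cocompact subgroup $\widehat\Gamma\le\widehat G$ such that $\widehat\Gamma\cap G^{0}$ is a lattice of $G^{0}$ containing $\Gamma\cap G^{0}$ and whose $\mathbb{R}$-component is $\tfrac1k\mathbb{Z}$ (resp.\ $\mathbb{Z}$), arranged so that $\rho^{-1}(\widehat\Gamma)=\Gamma$ for the natural map $\rho$ which is the inclusion on $G^{0}$ and sends $g$ to $\widehat g$; this is possible because, relative to a Mal'cev basis adapted to $\Gamma$, the structure constants of $\widehat G$ are rational. Then I set $\widehat x_{0}=e\widehat\Gamma\in\widehat X=\widehat G/\widehat\Gamma$ and put $\widetilde X:=H\widehat x_{0}$, where $H=\overline{\langle G^{0},\widehat g\rangle}$ is a closed subgroup, so that $\widetilde X$ is a sub-nilmanifold of $\widehat X$ invariant under translation by $\widehat g$.

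Finally I would check that $g'\Gamma\mapsto\rho(g')\widehat x_{0}$ is a well-defined continuous injection of $X=G/\Gamma$ onto $\widetilde X$ intertwining $T$ with left translation by $\widehat g$; well-definedness and injectivity reduce exactly to the identity $\rho^{-1}(\widehat\Gamma)=\Gamma$ built into the construction of $\widehat\Gamma$, and since $X$ is compact and $\widehat X$ Hausdorff the map is automatically a homeomorphism onto its (closed) image, which one then identifies with $H\widehat x_{0}$. The main obstacle is the construction of $\widehat\Gamma$: it must be discrete, cocompact, compatible with $\Gamma\cap G^{0}$, and have its $\mathbb{R}$-component tuned so that $\widehat g\in G^{0}\cdot\widehat\Gamma$ and translation by $\widehat g$ reproduces exactly $(X,T)$ and nothing larger — in particular, in the finite-order case one must make sure that adjoining the $k$-th root $\widehat g$ does not destroy discreteness. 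This bookkeeping is precisely Leibman's argument, and at that point I would simply quote \cite{Le}.
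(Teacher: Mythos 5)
The paper does not actually prove this lemma: it is imported directly from Leibman \cite{Le} (``we require an extra definition and one lemma from \cite{Le}''), so the citation is all the paper supplies, and since your proposal also ends by quoting \cite{Le} for the bookkeeping, you are following the same source and the same underlying idea — unfolding the component group into a continuous direction via $\widehat G=G^{0}\rtimes_{\psi}\mathbb{R}$, $\psi(t)=\exp(tD)$, $D=\log$ of the unipotent conjugation, after reducing to $G=\langle G^{0},g\rangle$ and lifting $G^{0}$ to its universal cover. In that sense the proposal is acceptable and matches the paper's treatment.

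Two details of your explicit sketch, however, are wrong as written, and they are precisely the care that \cite{Le} takes. First, in the finite-order case conjugation by your $\widehat g=(a,\tfrac1k)$ restricted to $G^{0}$ is $\mathrm{inn}(a)\circ\psi(\tfrac1k)$, not $\theta=\psi(1)$. In fact unipotence forces $\theta$ to be the unique unipotent $k$-th root of $\theta^{k}=\mathrm{inn}(g^{k})$, i.e.\ $\theta=\mathrm{inn}(a)$ with your $a=\exp(\tfrac1k\log g^{k})$; hence your $\widehat g$ conjugates $G^{0}$ by $\theta^{(k+1)/k}\neq\theta$ whenever $\theta\neq\mathrm{id}$, the map $\rho$ (inclusion on $G^{0}$, $g\mapsto\widehat g$) is not a homomorphism, and the intertwining $\iota(gx)=\widehat g\,\iota(x)$ fails. (The consistent choice in this case is essentially to take the $\mathbb{R}$-action trivial, i.e.\ $\widehat G=G^{0}\times\mathbb{R}$, exploiting $\theta=\mathrm{inn}(a)$, rather than $\psi(t)=\exp(t\log\theta)$.) Second, prescribing the $\mathbb{R}$-component of $\widehat\Gamma$ to be $\mathbb{Z}$ (resp.\ $\tfrac1k\mathbb{Z}$) contradicts your own requirement $\rho^{-1}(\widehat\Gamma)=\Gamma$: already for $G=G^{0}\times\mathbb{Z}$, $\Gamma=(\Gamma\cap G^{0})\times k\mathbb{Z}$, $g=(e,1)$, one must take the component $k\mathbb{Z}$, since with $\mathbb{Z}$ the translation $\widehat g$ becomes trivial in that direction and the embedded copy loses the $\mathbb{Z}/k\mathbb{Z}$ factor of $(X,T)$; the component must be dictated by the image of $\Gamma$, not by the order of $\bar g$. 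Since both you and the paper ultimately rest on \cite{Le}, these slips do not sink the proposal, but the construction as you wrote it would not stand on its own.
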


This is the main result of the section.

\begin{thm}
Let $X=G/\Gamma$ be a nilmanifold and $T:X\to X$ the transformation
given by $x\to gx$ with a fixed $g\in G$. Let $\mathcal{U}$ be an open cover
of $X$. Then, for all $n\in \N$,
$$ c(\mathcal{U},n)\leq P(n)$$ where $P$ is a polynomial.
\end{thm}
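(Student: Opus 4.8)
The plan is to first reduce to the case where $G$ is connected and simply connected, and then to combine the polynomial growth estimate \eqref{eq:nildynamic} with a polynomial bound on the covering numbers of $X$ in the Mal'cev metric; the inequality $c(\mathcal{U},n)\le r(n,\tfrac{\delta}{2})$ then finishes the argument.

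First I would handle the reduction. By Lemma \ref{lem:reduction} we may identify $(X,T)$ with the action of left translation by some $\widehat{g}\in\widehat{G}$ on a closed, invariant submanifold $\widetilde{X}\subset\widehat{X}=\widehat{G}/\widehat{\Gamma}$, where $\widehat{G}$ is connected and simply connected, $\widehat{\Gamma}$ is co-compact, and $\widehat{T}$ denotes translation by $\widehat{g}$ on $\widehat{X}$, so that $\widehat{T}$ preserves $\widetilde{X}$ and restricts to $T$ there. Given an open cover $\mathcal{U}$ of $X=\widetilde{X}$, choose for each $U\in\mathcal{U}$ an open set $\widehat{U}\subset\widehat{X}$ with $\widehat{U}\cap\widetilde{X}=U$, and set $\widehat{\mathcal{U}}=\{\widehat{U}:U\in\mathcal{U}\}\cup\{\widehat{X}\setminus\widetilde{X}\}$, an open cover of $\widehat{X}$ since $\widetilde{X}$ is closed. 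Each member of $\bigvee_{i=0}^{n}\widehat{T}^{-i}\widehat{\mathcal{U}}$ either has empty intersection with $\widetilde{X}$ or intersects $\widetilde{X}$ in a member of $\bigvee_{i=0}^{n}T^{-i}\mathcal{U}$; hence the complexity of $\mathcal{U}$ in $X$ is at most the complexity of $\widehat{\mathcal{U}}$ in $\widehat{X}$. So it suffices to prove the theorem when $G$ is connected and simply connected, which we assume from now on.

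Next I would bound the shadowing numbers. Fix $\epsilon>0$. By \eqref{eq:nildynamic}, for all $0\le i\le n$ and all $x,y\in X$ we have $d(T^{i}x,T^{i}y)=d(g^{i}x\Gamma,g^{i}y\Gamma)\le P(i)\,d(x\Gamma,y\Gamma)\le P(n)\,d(x,y)$, since the fixed polynomial $P$ has positive coefficients and is therefore increasing on $(0,\infty)$. Consequently any subset $F\subset X$ which is $\big(\epsilon/P(n)\big)$-dense for the metric $d$ is $(n,\epsilon)$-shadowing, so $r(n,\epsilon)\le N\big(X,d,\epsilon/P(n)\big)$, the minimal number of $d$-balls of radius $\epsilon/P(n)$ needed to cover $X$. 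To estimate this I would use the Factorization Lemma to write $X=\{\exp(t_1X_1)\cdots\exp(t_mX_m)\Gamma:\ t\in[0,1)^{m}\}$ with $m=\dim G$, together with Lemma \ref{coordinates_bound}, which on this fundamental domain gives $d(x\Gamma,y\Gamma)\le C\,|\psi(x)-\psi(y)|$ for a constant $C$ depending only on $m$ and the Mal'cev basis. Hence the image of the lattice $\big(\tfrac{r}{C}\mathbb{Z}^{m}\big)\cap[0,1)^{m}$ is $r$-dense in $(X,d)$, giving $N(X,d,r)\le C_0\,r^{-m}$ for $0<r\le 1$ and a constant $C_0$. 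Taking $r=\epsilon/P(n)$ yields $r(n,\epsilon)\le C_0\big(P(n)/\epsilon\big)^{m}$, a polynomial in $n$. Then, for an open cover $\mathcal{U}$ with Lebesgue number $\delta>0$, $c(\mathcal{U},n)\le r\big(n,\tfrac{\delta}{2}\big)\le C_0\big(2P(n)/\delta\big)^{m}=:Q(n)$, and $Q$ is a polynomial, which is the assertion.

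The dynamical content of the proof is already in hand: it is entirely packaged in \eqref{eq:nildynamic} (polynomial distortion of orbits) and in Lemma \ref{lem:reduction} (reduction to the connected, simply connected case). What remains to be checked carefully is bookkeeping, and this is where the only real care is needed: that the Mal'cev metric $d$ generates the topology of $X$, so that it may legitimately replace the ambient metric in the inequality $c(\mathcal{U},n)\le r(n,\tfrac{\delta}{2})$; and that the covering-number estimate $N(X,d,r)\lesssim r^{-m}$ is not spoiled by the boundary of the fundamental domain (a standard perturbation of the lattice absorbs this). Neither point is deep, so I expect the write-up to be short once these are addressed.
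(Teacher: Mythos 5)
Your argument is correct and is essentially the paper's proof: the same reduction to the connected, simply connected case via Lemma \ref{lem:reduction} (adjoining $\widehat{X}\setminus\widetilde{X}$ to the lifted cover and intersecting back with the invariant closed set $\widetilde{X}$), the same use of \eqref{eq:nildynamic} to turn a $\frac{\delta}{2P(n)}$-dense set into an $(n,\tfrac{\delta}{2})$-shadowing set, and the same conclusion through $c(\mathcal{U},n)\le r(n,\tfrac{\delta}{2})$ with $\delta$ the Lebesgue number. The only divergence is the covering-number estimate: the paper invokes the upper box (Minkowski) dimension of the nilmanifold to get $N(\epsilon)\le K\,\epsilon^{-(\dim X+1)}$, whereas you construct an explicit $\epsilon$-net from a lattice in Mal'cev coordinates via the Factorization Lemma and Lemma \ref{coordinates_bound}; this is a more self-contained route to the same polynomial bound (and both write-ups share the implicit point you correctly flag, that the Green--Tao metric induces the topology of $X$).
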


\begin{proof}
First, assume that $G$ is a connected, simply connected nilpotent
Lie Group. For $\epsilon>0$, let $N(\epsilon)$ be the smallest
number of balls of ratio $\epsilon$ needed to cover $X$. The upper
Minkowski dimension or box dimension (see \cite{Po}) is defined
by
$$\limsup\limits_{\epsilon\to 0} \frac{\log N(\epsilon)}{\log(1/\epsilon)}$$
This dimension coincides with the usual dimension of the manifold
$X$ and hence there exists a constant $K$ such that:
$$N(\epsilon)\leq K (\frac{1}{\epsilon})^{\dim(X)+1}$$

Using the bound in \eqref{eq:nildynamic} we observe that if $x,y\in X$
and $d(x,y)\leq \frac{\delta}{2 P(n)}$, then $d(T^ix,T^iy)\leq
\frac{P(i)}{P(n)}\frac{\delta}{2}\leq \frac{\delta}{2}$ if $i\leq n$
since $P$ has positive coefficients. Let $\delta$ the Lebesgue
number of $\mathcal{U}$. We get,
$$c(\mathcal{U},n)\leq N\left (\frac{\delta}{ 2P(n)} \right )
\leq K\frac{(2P(n))^{\dim(X)+1}}{\delta^{\dim(X)+1}}$$
and the polynomial bound of the complexity is obtained.
\medskip

Now consider the general case. Denote by $\pi :X\to\widetilde{X}$
the isomorphism given by Lemma \ref{lem:reduction}. We see that
$(X,T)$ is conjugate with $(\widetilde{X},\widetilde{T})$ where
$\widetilde{T}:\widehat{X}\to \widehat{X}$ is defined by
$\widetilde{T}(\widehat{x})=\pi(g)\widehat{x}$. Hence
$(\widetilde{X},\widetilde{T})$ is a subsystem of
$(\widehat{X},\widetilde{T})$.
\medskip

If $\mathcal{U}=(U_1,\ldots,U_m)$ is an open cover of $X$,
$\pi(\mathcal{U})=(\pi(U_1),\ldots,\pi(U_m))$ is an open cover of
$\widetilde{X}$ and $c(\mathcal{U},n)=c(\pi(\mathcal{U}),n)\leq
c(\widehat{\mathcal{U}},n)$ where
$\widehat{\pi}(U)=(V_1,\ldots,V_m,\widetilde{X}^c)$ is an open cover
of $\widehat{X}$ with $\pi(U_i)=\widetilde{X}\cap V_i$ for all $i\in
\{1,\ldots,m\}$. Thus, we get a polynomial bound of the complexity in the
general case.
\end{proof}

Finally, we consider the complexity of a $\infty$-step nilsystem.
For that we need the following easy lemma.
\begin{lem}
Suppose $X$ is an inverse limit of the systems $(X_i,T)_{i\in \N}$
where  $(X_i,T)$ has a polynomial complexity for each $i \in \N$.
Then $X$ has polynomial complexity.
\end{lem}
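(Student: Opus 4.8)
The plan is to reduce the complexity of an arbitrary finite open cover of $X$ to the complexity of a cover of one of the factors $X_i$, and then invoke the hypothesis. Write $q_i\colon X\to X_i$ for the projection onto the $i$-th coordinate and $\pi_i\colon X_{i+1}\to X_i$ for the bonding maps. Since the $\pi_i$ are factor maps (in particular surjective), each $q_i$ is a surjective continuous map with $q_i\circ T=T\circ q_i$, i.e. a factor map of $(X,T)$ onto $(X_i,T)$.

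The first and main step is to show that any finite open cover $\mathcal U$ of $X$ is refined by a cover pulled back from some $X_N$. Let $\delta>0$ be a Lebesgue number of $\mathcal U$. For $i\le N$ the $i$-th coordinate of a point of $X$ is a continuous function of its $N$-th coordinate (namely $\pi_i\circ\cdots\circ\pi_{N-1}$ of it), so, using compactness of $X_N$, choose first $N$ with $2^{-N}<\delta/4$ and then $\epsilon>0$ so small that whenever $x,y\in X$ satisfy $\rho_N(x_N,y_N)<\epsilon$ one has $\sum_{i\le N}2^{-i}\rho_i(x_i,y_i)<\delta/4$. Let $\mathcal V$ be a finite open cover of $X_N$ all of whose members have diameter less than $\epsilon$. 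For $V\in\mathcal V$ and $x,y\in q_N^{-1}(V)$ we get $\rho_N(x_N,y_N)<\epsilon$, hence, using also $\sum_{i>N}2^{-i}\rho_i(x_i,y_i)\le 2^{-N}<\delta/4$, that $\rho(x,y)<\delta/2$; so $q_N^{-1}(V)$ has diameter less than $\delta$ and therefore lies in a single member of $\mathcal U$. Thus $q_N^{-1}(\mathcal V)$ refines $\mathcal U$.

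It remains to compare complexities. Since $q_N^{-1}(\mathcal V)$ refines $\mathcal U$, the cover $\bigvee_{i=0}^n T^{-i}q_N^{-1}(\mathcal V)$ refines $\bigvee_{i=0}^n T^{-i}\mathcal U$, and any subcover of the former of cardinality $M$ yields one of the latter of cardinality at most $M$; hence $c(\mathcal U,n)\le c(q_N^{-1}(\mathcal V),n)$ for all $n$. Moreover, because $q_N\circ T=T\circ q_N$ we have $T^{-i}q_N^{-1}=q_N^{-1}T^{-i}$, so every member of $\bigvee_{i=0}^n T^{-i}q_N^{-1}(\mathcal V)$ is of the form $q_N^{-1}(W)$ with $W\in\bigvee_{i=0}^n T^{-i}\mathcal V$, and by surjectivity of $q_N$ a family $q_N^{-1}(W_1),\ldots,q_N^{-1}(W_M)$ covers $X$ if and only if $W_1\cup\cdots\cup W_M=X_N$. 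Hence $c_X(q_N^{-1}(\mathcal V),n)=c_{X_N}(\mathcal V,n)$ for all $n$. By hypothesis $(X_N,T)$ has polynomial complexity, so there is a polynomial $P$ with $c_{X_N}(\mathcal V,n)\le P(n)$, and therefore $c_X(\mathcal U,n)\le P(n)$ for all $n$, as required.

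The only genuine point is the construction in the second paragraph — making precise that proximity in the single coordinate $X_N$ together with the exponentially small tail of the inverse-limit metric controls proximity in $X$, so that a cover pulled back from $X_N$ refines $\mathcal U$. Everything afterwards is the routine behaviour of the complexity function under refinement and under factor maps.
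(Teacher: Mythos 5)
Your argument is correct, but it proceeds differently from the paper. The paper works with the shadowing numbers $r(n,\epsilon)$: it chooses $N$ with $\delta=\epsilon-2^{-N}>0$, bounds $r_X(\epsilon,n)\leq \prod_{i\leq N} r_{X_i}(\delta,n)$ (shadowing the first $N$ coordinates within $\delta$ controls the whole inverse-limit metric up to the tail $2^{-N}$), and then invokes polynomial bounds on each factor; the reduction from cover complexity to shadowing numbers, $c(\mathcal U,n)\leq r(n,\delta/2)$, was set up earlier in the section. You instead stay entirely at the level of open covers: using the Lebesgue number and the exponentially small tail of the metric you show that every finite open cover $\mathcal U$ of $X$ is refined by $q_N^{-1}(\mathcal V)$ for a suitable cover $\mathcal V$ of a single finite stage $X_N$, and then use the standard facts that refinement decreases complexity and that $c_X(q_N^{-1}(\mathcal V),n)=c_{X_N}(\mathcal V,n)$ for the factor map $q_N$. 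Your route has two mild advantages: it uses exactly the hypothesis as stated (polynomial bounds on $c(\cdot,n)$ for each $X_i$, rather than the a priori stronger requirement of polynomially bounded shadowing numbers, which is what the paper's product inequality actually consumes), and it bounds $c_X(\mathcal U,n)$ by the complexity of one cover of one factor instead of a product of $N$ polynomial bounds. The paper's approach buys brevity and a statement that applies directly to the product system. The only points you assert without proof — surjectivity of the coordinate projections $q_N$ (lifting a point of $X_N$ to a thread) and the existence of a finite cover of $X_N$ by sets of diameter less than $\epsilon$ — are routine.
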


\begin{proof}
We will show that the product system has polynomial complexity and
therefore the inverse limit has the same property. Let $\epsilon>0$ and choose
$N\in \N$ such that $\delta=\epsilon-2^{-N}>0$. Then
$r_{X}(\epsilon,n)\leq \prod\limits_{i\leq N} r_{X_i}(\delta,n)$ and
by assumption the right side is polynomially bounded.
\end{proof}

We conclude,

\begin{thm}
If $X$ is an $\infty$-step nilsystem then it has a polynomial
complexity.
\end{thm}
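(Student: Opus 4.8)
The plan is to assemble the two main results of Section~\ref{section-complexity} with the structure theorem for $\infty$-step nilsystems from Section~\ref{section-nilsystem}. By that structure theorem a minimal $\infty$-step nilsystem $(X,T)$ is an inverse limit $X=\invlim(X_i,T)_{i\in\N}$ of minimal nilsystems. Each factor $(X_i,T)$ is a nilmanifold $G_i/\Gamma_i$ together with a nilrotation, so by the complexity theorem for nilsystems proved above --- in its general form, via Leibman's reduction in Lemma~\ref{lem:reduction}, since the $G_i$ need not be connected or simply connected --- there is a polynomial $P_i$ with $c(\mathcal{U}_i,n)\le P_i(n)$ for every open cover $\mathcal{U}_i$ of $X_i$ and every $n\in\N$. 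Thus every $(X_i,T)$ has polynomial complexity.

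First I would then invoke the last lemma of this section: an inverse limit of systems each of polynomial complexity again has polynomial complexity. Applied to $X=\invlim(X_i,T)_{i\in\N}$ this yields immediately that $(X,T)$ has polynomial complexity, which is the assertion. For concreteness I would recall how that lemma operates in the present setting: the complexity of a cover $\mathcal{U}$ is bounded by the shadowing number $r(n,\delta/2)$ with $\delta$ the Lebesgue number of $\mathcal{U}$, as noted at the beginning of the section; and since $X$ sits as a closed invariant subset of the product $\prod_{i\in\N}X_i$ carrying the metric $\rho=\sum_i 2^{-i}\rho_i$ used to build the inverse limit, it suffices to control the shadowing numbers of that product. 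Given $\epsilon>0$, choosing $N$ with $2^{-N}<\epsilon$ and setting $\delta=\epsilon-2^{-N}>0$ one obtains $r_X(n,\epsilon)\le\prod_{i\le N}r_{X_i}(n,\delta)$, a finite product of polynomially bounded quantities, hence polynomially bounded in $n$.

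All the ingredients are already in place, so there is no genuine obstacle here; the work was done in the preceding theorem and lemma. The one point that requires care --- and the reason one routes through the shadowing numbers and the inverse-limit metric rather than pushing a given cover down to a single factor --- is that an open cover of $X$ need not be the pullback of a cover of any finite stage $X_i$. The uniform polynomial bounds on the factors, combined with the exponentially decaying weights in the product metric, are precisely what absorb the infinitely many remaining coordinates and let one reduce to finitely many factors.
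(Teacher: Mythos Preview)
Your proof is correct and follows essentially the same strategy as the paper: nilsystems have polynomial complexity by the preceding theorem, and the inverse-limit lemma then transfers this to $X$. The only cosmetic difference is that you invoke the structure theorem of Section~\ref{section-nilsystem} to write $X$ directly as an inverse limit of nilsystems and apply the lemma once, whereas the paper first writes each $X_d=X/\RP^{[d]}$ as an inverse limit of $d$-step nilsystems (via Theorem~\ref{HKM}) and then writes $X$ as the inverse limit of the $X_d$, applying the lemma twice; your route is slightly more direct.
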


\begin{proof}
By the above discussion a $d$-step nilsystem has polynomial
complexity. By Theorem \cite{HKM} the factors $(X_d,T)$ defined by
the relation $\RP^{[d]}$ are inverse limits of $d$-step nilsystems
and therefore they have polynomial complexity. Using again the inverse limit argument
we conclude the polynomial bound for the complexity of $(X,T)$.
\end{proof}

\section{Appendix}

In this appendix we give the proof of Theorem \ref{noteasy}. First
we discuss Theorem \ref{noteasy} (2). The idea to prove this fact
was inspired from personal communications with B. Kra \cite{kra},
here we give details of the proof.

\begin{lem}\cite{Host}\label{h-k}
Let $(X,\mu,T)$ be an ergodic $d$-step nilsystem with $X=G/\Gamma$,
$\mu$ be its Haar probability measure and $T$ be the translation by
the element $t\in G$. Moreover, assume that the group $G$ can be
spanned by the connected component of the identity and the element
$t$ (it is always possible to reduce to this case, see \cite{BHK}).
Let $d\geq 1$ be an integer. If $Z_k$ is the maximal factor of order
$d$ of $(X,\mu,T)$ with $k\leq d$, then $Z_k$ has the form
$G/(G_{k+1}\Gamma)$ endowed with the translation by the projection
of $t$ on $G/G_{k+1}$, where $G_1=G, G_2=[G,G_1],
G_3=[G,G_2],\ldots, G_{d+1}=\{e\}$.
\end{lem}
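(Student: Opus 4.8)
\emph{The plan.} Set $W=G/(G_{k+1}\Gamma)$, let $\pi\colon X=G/\Gamma\to W$ be the canonical projection, and let $\bar t$ be the image of $t$ in $G/G_{k+1}$. The plan is to show that, modulo $\mu$, the factor $\sigma$-algebra $\mathcal{Z}_k$ coincides with $\pi^{-1}(\mathcal{B}_W)$; this identifies $(Z_k,\mu_k,T)$ with $(W,\mathrm{Haar},\bar t)$, which is exactly the claim (the extreme cases $k=0$ and $k=d$ being the trivial system and $X$ itself, which is consistent since $X$ is a system of order $d$).

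\emph{Step 1 ($W$ is a factor of $Z_k$).} Since $G_{k+1}\Gamma$ is closed in $G$ and $G_{k+1}\cap\Gamma$ is cocompact in $G_{k+1}$ (standard Mal'cev theory), $W\cong(G/G_{k+1})/(\Gamma G_{k+1}/G_{k+1})$ is a compact nilmanifold whose structure group $G/G_{k+1}$ is at most $k$-step nilpotent, and $\pi$ intertwines $T$ with $\bar t$ and sends the Haar measure of $X$ to that of $W$. Hence $(W,\bar t)$ is a factor of $(X,T)$ and is an ergodic (so minimal and uniquely ergodic) nilsystem of order at most $k$; in particular it is a measurable factor of order $k$ of $(X,\mu,T)$, so by maximality of $Z_k$ it is a factor of $Z_k$, giving $\pi^{-1}(\mathcal{B}_W)\subseteq\mathcal{Z}_k$.

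\emph{Step 2 ($Z_k$ is a factor of $W$).} By the Host-Kra characterization of $Z_k$ (namely $\E(g\mid\mathcal{Z}_k)=0$ iff the seminorm $\|g\|_{k+1}$ vanishes) it suffices to prove $\|g\|_{k+1}=0$ whenever $g\in L^{\infty}(X,\mu)$ satisfies $\E(g\mid\pi^{-1}(\mathcal{B}_W))=0$. Using the seminorm formula $\|g\|_{k+1}^{2^{k+1}}=\int_{X^{[k+1]}}\prod_{\ep\subseteq[k+1]}\mathcal{C}^{|\ep|}g(x_\ep)\,d\mu^{[k+1]}(\mathbf{x})$, where $\mathcal{C}$ denotes complex conjugation, one invokes the structure theory for cubes of nilsystems: $\mu^{[k+1]}$ is the Haar measure of the nilmanifold $\Q^{[k+1]}(X)$, whose acting (``cube'') group $\mathbf{G}^{[k+1]}\subseteq G^{2^{k+1}}$ contains the subgroup $\{(e,\dots,e,h):h\in G_{k+1}\}$ translating only the coordinate indexed by $[k+1]$. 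Thus $\mu^{[k+1]}$ is invariant under that action, so if one disintegrates $\mu^{[k+1]}$ over the coordinates $(x_\ep)_{\ep\neq[k+1]}$ the conditional law $\nu$ of $x_{[k+1]}$ is $G_{k+1}$-invariant, whence $\int\mathcal{C}^{k+1}g\,d\nu=\int\mathcal{C}^{k+1}\E(g\mid\pi^{-1}(\mathcal{B}_W))\,d\nu=0$. Therefore the whole integral vanishes, $\|g\|_{k+1}=0$, and $\mathcal{Z}_k\subseteq\pi^{-1}(\mathcal{B}_W)$.

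Putting Steps 1 and 2 together gives $\mathcal{Z}_k=\pi^{-1}(\mathcal{B}_W)$, hence the lemma. The main obstacle is the structural input in Step 2 — that the Host-Kra measure $\mu^{[k+1]}$ of the nilsystem $X$ is the Haar measure of $\Q^{[k+1]}(X)$ and that its acting group contains a copy of $G_{k+1}$ moving only the top vertex; this is exactly where the cube-group description of $\Q^{[d]}$ for nilsystems (Host-Kra) must be used, everything else being soft. An alternative for Step 2 that sidesteps the cube measure is to use that every ergodic factor of an ergodic nilsystem is again an ergodic nilsystem with an algebraic factor map: writing $Z_k=G/(N\Gamma)$ with $N\trianglelefteq G$, the fact that $Z_k$ has order $k$ (in the reduced setting where $G$ is generated by its identity component and $t$) forces $G_{k+1}\subseteq N$, so $Z_k$ is a factor of $W$; combined with Step 1 and the coalescence of ergodic nilsystems, this again yields $Z_k\cong W$.
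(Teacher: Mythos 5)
The paper offers no proof of this lemma at all: it is imported verbatim from Host's work \cite{Host} (see also the appendix of \cite{BHK}), so there is no internal argument to compare yours against. Your proof is essentially the standard argument from that literature and is correct. Step 1 is the soft inclusion $\pi^{-1}(\mathcal{B}_W)\subseteq\mathcal{Z}_k$, and Step 2 is exactly how the reverse inclusion is obtained there: the seminorm characterization of $\mathcal{Z}_k$ combined with the Host--Kra description of $\mu^{[k+1]}$ for a nilsystem as the Haar measure of a subnilmanifold of $X^{[k+1]}$ invariant under the cube group, which contains the copy of $G_{k+1}$ translating only the vertex indexed by $[k+1]$; you correctly identify this structural fact as the one irreducible input, and it is precisely what the cited reference supplies. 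Two points you pass over quickly are worth making explicit, though both are routine: (i) one needs $G_{k+1}\Gamma$ closed and $G_{k+1}\cap\Gamma$ cocompact in $G_{k+1}$ so that $W=G/(G_{k+1}\Gamma)$ is a genuine nilmanifold of the $k$-step group $G/G_{k+1}$ --- under the spanning hypothesis this uses that $G_{j}$ is connected for $j\ge 2$, as the paper's appendix also notes; (ii) in Step 2 the equality $\int g\,d\nu=\int\mathbb{E}\bigl(g\mid\pi^{-1}(\mathcal{B}_W)\bigr)\,d\nu$ requires identifying the conditional expectation with the fiberwise Haar average over the compact $G_{k+1}$-orbits (uniqueness of the invariant probability on such a homogeneous space) and then using that the top-vertex marginal of $\mu^{[k+1]}$ is $\mu$ to upgrade $\mu$-a.e.\ vanishing of that average to $\nu$-a.e.\ vanishing for almost every conditional measure $\nu$. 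With these standard insertions your two steps give $\mathcal{Z}_k=\pi^{-1}(\mathcal{B}_W)$ mod $\mu$, which is the lemma; the alternative ending you sketch (algebraicity of factors of nilsystems plus the order-$k$ constraint forcing $G_{k+1}\subseteq N$) is also viable but leans on a further structural theorem, so the seminorm route is the cleaner one.
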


\medskip

Now we prove Theorem \ref{noteasy} (2):

\medskip

\noindent{\it Proof of Theorem \ref{noteasy} (2):} Let $n>d$ be any
integer, we will show $Z_n=Z_d$. In \cite{HK05} it was shown that
$Z_n$ is an inverse limit of $n$-step nilsystems $(Z_{n,i})_{i\in
\N}$. For any $Z_{n,i}$, assume it has the form of $G/\Gamma$, where
the group $G$ is spanned by the connected component of the identity
and the translation element $t$.

Let $G^o$ be the identity component of $G$. Just as showed in
\cite{BHK}, $G_2=[G,G]= [G^o,G]$ is connected; and inductively for
any integer $k\geq 2$, $G_k$ is connected. By Lemma \ref{h-k}, the
$d$-step maximal nilfactor and $d+1$-step maximal nilfactor of
$G/\Gamma$ is $G/(G_{d+1}\Gamma)$ and $G/(G_{d+2}\Gamma)$
respectively. We have that $G/(G_{d+2}\Gamma)$ is also a $d+1$-step
nilfactor of $X$, so it is a factor of $Z_{d+1}=Z_d$, which implies
that $G/(G_{d+2}\Gamma)$ is also a $d$-step nilsystem. Now, by the
maximality of $G/(G_{d+1}\Gamma)$, we have $G/(G_{d+2}\Gamma)$ and
$G/(G_{d+1}\Gamma)$ coincide. Then, $G_{d+1}\Gamma=G_{d+2}\Gamma$,
and of course the nilpotent Lie groups $G_{d+1}$ and $G_{d+2}$ have
the same dimension since $\Gamma$ is discrete.

For any positive integer $k$, let $\mathfrak{g}_k$ be the associated
Lie algebra of $G_k$. Then $\mathfrak{g}_{d+1}$ and
$\mathfrak{g}_{d+2}$ have the same dimension, which implies that
$\mathfrak{g}_{d+1}$ and $\mathfrak{g}_{d+2}$ coincide since
$\mathfrak{g}_{d+2}$ is a subalgebra of $\mathfrak{g}_{d+1}$. Since
$G_{d+1}$ and $G_{d+2}$ are connected,
$G_{d+1}=\exp(\mathfrak{g}_{d+1})=\exp(\mathfrak{g}_{d+2})=G_{d+2}$,
and then we have that
$$G_{d+3}=[G,G_{d+2}]=[G,G_{d+1}]=G_{d+2}=G_{d+1}.$$
Inductively, we have $G_{k+1}=G_{d+1}$ for all $d\leq k\leq n$,
which implies that $G_{d+1}=\{e\}$, $G$ is $d$-step nilpotent and
$Z_{n,i}=G/\Gamma$ is a $d$-step nilsystem. So the inverse limit
$Z_n$ is a $d$-step nilsystem. By the maximality of $Z_d$ we
conclude $Z_n=Z_d$. The proof is completed. \hfill $\square$
\medskip

To show the next lemma we need some results from \cite{HKM}.

\begin{prop}\label{h-k-m} Let $(X,T)$ be a minimal system. Then,
If $(X,T)$ is an inverse limit of some $d$-step nilsystems,
then each measurable factor is a topological factor.

\end{prop}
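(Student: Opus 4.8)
The plan is to combine the unique ergodicity of $(X,T)$ with the rigidity of nilsystems. Since $(X,T)$ is a minimal inverse limit of $d$-step nilsystems, it is a (minimal) system of order $d$, in particular uniquely ergodic; write $\mu$ for its invariant measure. Let $\theta\colon (X,\mathcal{B},\mu,T)\to (Y,\mathcal{Y},\nu,S)$ be a measure-preserving factor map. I must produce a topological factor $(Z,T)$ of $(X,T)$ whose unique invariant measure makes it measure-theoretically isomorphic to $(Y,\nu,S)$; equivalently, I must show that the invariant sub-$\sigma$-algebra $\theta^{-1}\mathcal{Y}\subseteq\mathcal{B}$ coincides, mod $\mu$, with $\pi^{-1}\mathcal{B}_Z$ for some continuous factor map $\pi\colon X\to Z$.

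The first step is to identify $(Y,\nu,S)$ measure-theoretically. As observed in \cite{HKM} and recalled above, a topological system of order $d$ is also a measure-theoretic system of order $d$ for its unique invariant measure, i.e. $\mathcal{Z}_d(X,\mu)=\mathcal{B}$ mod $\mu$. Since the Host--Kra seminorm $\normm{\cdot}_{d+1}$ is preserved under factor maps ($\theta$ intertwines the actions and preserves the measure, so $\normm{f\circ\theta}_{d+1}=\normm{f}_{d+1}$), faithfulness of $\normm{\cdot}_{d+1}$ on $L^\infty(\mu)$ descends to $L^\infty(\nu)$, and hence $(Y,\nu,S)$ is a measure-theoretic system of order $d$ too. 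By the Host--Kra structure theorem \cite{HK05}, $(Y,\nu,S)$ is then a measure-theoretic inverse limit of $d$-step nilsystems $(Y_j,S_j)_{j\in\N}$ with $\mathcal{B}_{Y_0}\subseteq\mathcal{B}_{Y_1}\subseteq\cdots$ and $\bigvee_j\mathcal{B}_{Y_j}=\mathcal{Y}$, and each $(Y_j,S_j)$ is itself a measure-preserving factor of $(X,\mu,T)$.

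The second step is to realize each $Y_j$ as a topological factor of $(X,T)$, coherently, and then pass to the limit. Here the promised results from \cite{HKM} (see also \cite{Le}) enter: a measure-preserving factor of a $d$-step nilsystem is isomorphic to one of its topological factors, and a measure-preserving factor map between $d$-step nilsystems agrees a.e.\ with a continuous one. Granting this, I first realize $Y_0$ as $\pi_0\colon X\to Z_0$ with $Z_0\cong Y_0$; realizing $Y_1$ as $\pi_1\colon X\to Z_1$ with $Z_1\cong Y_1$, the inclusion $\mathcal{B}_{Y_0}\subseteq\mathcal{B}_{Y_1}$ exhibits $Y_0$ as a measurable factor of the nilsystem $Z_1$, hence, by the rigidity above, as a topological factor $Z_1\to Z_0$, and after adjusting by an isomorphism I may assume $\pi_0$ factors through $\pi_1$. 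Iterating yields a compatible tower of continuous factor maps $X\to\cdots\to Z_{j+1}\to Z_j\to\cdots$ with $Z_j\cong Y_j$. Then $Z:=\invlim(Z_j,T)$ is a topological factor of $(X,T)$, and $\pi^{-1}\mathcal{B}_Z=\bigvee_j\pi_j^{-1}\mathcal{B}_{Z_j}=\bigvee_j\mathcal{B}_{Y_j}=\theta^{-1}\mathcal{Y}$ mod $\mu$, so $(Z,T)$ is the desired topological realization of $(Y,\nu,S)$.

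The main obstacle is the rigidity statement invoked in the second step. I would prove it by induction on the nilpotency degree. The base case is the classical description of the measurable factors of an ergodic compact abelian group rotation as quotient rotations $K/L$ (which are topological), combined with the identification, from \cite{HKM}, of the maximal measurable order-$1$ (Kronecker) factor of a system of order $d$ with its maximal equicontinuous factor. For the inductive step one uses that a $k$-step nilsystem $G/\Gamma$ is a compact abelian group extension of its maximal $(k-1)$-step nilfactor $G/(G_k\Gamma)$ (since $G_k$ is central), identifies a given measurable factor with an intermediate group extension, and invokes cocycle rigidity: the measurable transfer function trivializing the relevant cocycle can be chosen continuous, which rests on the distality and minimality of the systems involved. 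Carrying out this induction carefully, and checking that the topological realizations at successive levels assemble into a coherent inverse system, is the technical heart of the argument; the remaining steps are bookkeeping.
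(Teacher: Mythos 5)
The paper itself disposes of this proposition by pure citation: it is exactly the combination of \cite[Proposition 5.2]{HKM} (a minimal topological system of order $d$ is uniquely ergodic and is a system of order $d$ in the ergodic-theoretic sense) and \cite[Lemma 6.1]{HKM} (for such systems, measurable factors are topological factors). Your opening steps — unique ergodicity, the factor $(Y,\nu,S)$ inheriting the order-$d$ property via the Host--Kra seminorms, and the appeal to the Host--Kra structure theorem — are sound and follow this standard road. The difficulty is that everything then hinges on the ``rigidity'' statement that measurable factors of (and measurable factor maps between) $d$-step nilsystems are topological; this is precisely the substance of the cited HKM lemma, and your substitute justification does not work as stated. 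You propose to get the inductive step from cocycle rigidity, asserting that ``the measurable transfer function trivializing the relevant cocycle can be chosen continuous, which rests on the distality and minimality of the systems involved.'' Distality and minimality give no such thing: Section 5.2 of this very paper reproduces Furstenberg's example of a smooth cocycle $g$ over an irrational rotation (minimal, distal, even equicontinuous) whose transfer function $R$ is measurable but not a.e.\ equal to any continuous function. The genuine rigidity of nilsystem factors comes from the algebraic structure of the nilmanifold (classification of invariant sub-$\sigma$-algebras as quotients of the form $G/(H\Gamma)$), not from soft dynamical hypotheses, and proving it is the real content you would owe; attributing it to \cite{Le} is also off target, as that paper contains no such statement.

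There is a second, smaller but genuine, gap: your rigidity statement concerns factors of a single $d$-step nilsystem, yet you apply it to $X$, which is only an inverse limit of nilsystems, when you ``realize $Y_0$ as $\pi_0\colon X\to Z_0$.'' To make that step legitimate you must either show that each measurable nilsystem factor $Y_j$ of $(X,\mu,T)$ factors measurably through some finite stage $X_i$ of the inverse limit (this needs an argument, e.g.\ climbing the tower of $Y_j$ starting from the fact that its Kronecker factor is a compact abelian Lie group with finitely generated dual), or prove a rigidity statement directly for inverse limits; as written the step is unsupported. In summary, your global strategy — order-$d$ passes to factors, structure theorem, topological realization of each nilsystem level, assembly into a coherent inverse system — is the right one and is essentially what underlies the results the paper cites, but the proposal leaves its acknowledged ``technical heart'' unproved and props it up with a justification that the paper's own Section 5 example refutes.
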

\begin{proof} It is a combination of \cite[Proposition 5.2]{HKM}
and \cite[Lemma 6.1 ]{HKM}.
\end{proof}

\begin{lem}\label{basic}
Let $(X,T)$ be a minimal system of order $n$,
then the maximal measurable and topological factors of order $d$
coincide, where $d\leq n$.
\end{lem}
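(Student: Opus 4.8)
The plan is to exhibit the maximal topological factor of order $d$, namely $X_d=X/\RP^{[d]}$, and the maximal measurable factor of order $d$, $Z_d$, as measurable factors of $(X,T)$ that dominate one another, and then to conclude they are measure-theoretically isomorphic. Since $(X,T)$ has order $n$, Theorem \ref{HKM} gives that it is an inverse limit of minimal $n$-step nilsystems; in particular it is minimal and uniquely ergodic, and I write $\mu$ for its unique invariant measure and $\pi_d\colon X\to X_d$, $q_d\colon X\to Z_d$ for the canonical factor maps. (For $d\ge n$ both $X_d$ and $Z_d$ equal $X$, so the content is $d<n$, but the argument below is uniform.)

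The first comparison map is easy. By Theorem \ref{thm0}, $(X_d,T)$ is the maximal topological factor of order $d$, and, as observed in \cite{HKM} and recalled in Section \ref{uniqueergo}, it is also a system of order $d$ in the measurable sense for its unique invariant measure $(\pi_d)_*\mu$. Hence $\pi_d$ is a measurable factor map onto a measurable system of order $d$, so by the universal property of $Z_d$ it factors as $\pi_d=\phi\circ q_d$ for some measurable factor map $\phi\colon Z_d\to X_d$.

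For the reverse comparison I would use Proposition \ref{h-k-m}. Because $X$ is an inverse limit of nilsystems, that proposition realises the measurable factor $Z_d$ as a genuine topological factor of $X$: there are a topological factor map $p\colon X\to W$ and a measure isomorphism $\theta\colon Z_d\to W$ intertwining $q_d$ and $p$, i.e. $\theta\circ q_d=p$ a.e. The key point — which I expect to be the main obstacle — is to upgrade ``$W$ is of order $d$ in the measurable sense'' (which holds since $W\cong Z_d$) to ``$W$ is of order $d$ in the topological sense'', i.e. $\RP^{[d]}(W)=\Delta_W$. Here one notes that $W$, being a topological factor of the order-$n$ system $X$, is itself minimal of finite order, hence an inverse limit of nilsystems; each nilsystem $G/\Gamma$ occurring in that presentation is a factor of $W$, so it is of order $\le d$ in the measurable sense, and then Lemma \ref{h-k} — applied exactly as in the proof of Theorem \ref{noteasy}(2) — forces $G_{d+1}=\{e\}$, so each such $G/\Gamma$ satisfies $\RP^{[d]}=\Delta$. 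By the lifting property of $\RP^{[d]}$ (Theorem \ref{thm0}(1)) this yields $\RP^{[d]}(W)=\Delta_W$. Consequently $p$ is a topological factor map onto a system of order $d$, so by Theorem \ref{thm0}(2) it factors through $\pi_d$, say $p=r\circ\pi_d$ with $r$ continuous, and $\psi:=\theta^{-1}\circ r\colon X_d\to Z_d$ is a measurable factor map with $q_d=\psi\circ\pi_d$.

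Finally, from $\phi\circ q_d=\pi_d$ and $\psi\circ\pi_d=q_d$ a routine diagram chase (using that $q_d$ and $\pi_d$ are onto) gives $\psi\circ\phi=\mathrm{id}_{Z_d}$ and $\phi\circ\psi=\mathrm{id}_{X_d}$ almost everywhere, so $Z_d$ and $X_d$ are measure-theoretically isomorphic, which is the assertion. Besides the order-upgrade step highlighted above, the only delicate point is that the topological model furnished by Proposition \ref{h-k-m} must come as a factor of $X$ with the factor maps compatible; this is precisely what \cite[Proposition 5.2, Lemma 6.1]{HKM} provide, and the hypothesis that $(X,T)$ has finite order enters only to guarantee, via Theorem \ref{HKM}, that $X$ is an inverse limit of nilsystems so that Proposition \ref{h-k-m} applies.
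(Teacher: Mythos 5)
Your proof is correct, and it shares the paper's basic skeleton --- compare $X_d$ and $Z_d$ by producing factor maps in both directions, using Proposition \ref{h-k-m} to pass from measurable to topological factors and the universality of $Z_d$ for the easy direction --- but the two arguments diverge at the key step. The paper's proof asserts directly that the topological realization of $Z_d$ is a topological factor \emph{of order $d$} of $X$, notes that $X_d$ (with its unique measure) is a measurable factor of $Z_d$, and then applies Proposition \ref{h-k-m} a \emph{second} time, now to $Z_d$ viewed as an inverse limit of $d$-step nilsystems, to conclude that $X_d$ is a topological factor of $Z_d$; mutual maximality then gives $Z_d=X_d$ outright, as topological systems. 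You instead isolate exactly the point the paper glosses over --- that the topological model $W$ of $Z_d$ inside $X$ satisfies $\RP^{[d]}(W)=\Delta_W$ --- and prove it by writing $W$ as an inverse limit of nilsystems (legitimate, since $W$ has order at most $n$ as a factor of $X$) and running the Lemma \ref{h-k} computation from the proof of Theorem \ref{noteasy}(2) on each nilsystem in the tower, before factoring $p$ through $\pi_d$ via Theorem \ref{thm0}(2) and finishing with a diagram chase. Your route is more explicit and self-contained at the delicate point, at the cost of extra input (the connectedness of the groups $G_k$ from \cite{BHK} behind Lemma \ref{h-k}, and the standard fact that measurable factors of measurable systems of order $d$ again have order at most $d$); the paper's route is shorter and yields literal coincidence of $Z_d$ with $X_d$, whereas yours yields a measure-theoretic isomorphism compatible with the factor maps --- which is in fact precisely what is used when Lemma \ref{basic} is invoked in the proof of Theorem \ref{noteasy}.
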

\begin{proof}
Let $\mu$ be the unique invariant probability measure of $X$ and
$Z_d$ is the maximal measurable factor of order $d$ of $(X,\mu,T)$.
It is clear that $Z_d$ is a topological factor of order $d$ of
$(X,\mu,T)$ by Proposition \ref{h-k-m}. Endow the maximal
topological factor $X_d$ of order $d$ of $X$ with its unique
invariant probability measure. Clearly it is a measurable factor of
order $d$ of $(X,\mu,T)$ and so is a measurable factor of $Z_d$. By
Proposition \ref{h-k-m} again, $X_d$ is a topological factor of
$Z_d$, and so $Z_d=X_d$ is the maximal topological factor of order
$d$.
\end{proof}
\medskip

Now we can finish the proof of Theorem \ref{noteasy}.

\medskip

\noindent{\it Proof of Theorem \ref{noteasy}:} (1) For any $k\geq1$,
recall $X_k=X/\RP^{[k]}$ and let $\mu_k$ be its unique invariant
probability measure. Let $n>d$ be an integer. By the Lemma
\ref{basic} and since $X_n$ is a minimal system of order $n$, then
the maximal measurable and topological factors of order $k$ of $X_n$
coincide and $X_n/\RP^{[k]}(X_n)=X_k=Z_k$, $k\le n$. As
$(Z_d,\mu_d,T)=(Z_{d+1},\mu_{d+1},T)$, by Theorem \ref{noteasy}(2),
we have for any $d\leq k\leq n$, $(Z_k,\mu_k,T)=(Z_d,\mu_d,T)$.
Therefore $(Z_n,\mu_n,T)$ and $(Z_d,\mu_d,T)$ coincide in the
measurable sense, and by Proposition \ref{h-k-m}, they coincide in
the topological sense too, i.e. $X_n=Z_n=Z_d=X_d$, which implies
that $\RP^{[n]}(X)=\RP^{[d]}(X)$.
\medskip

(3) If $Z_n$ is measure theoretical isomorphic with
$X_{n}=X/\RP^{[n]}(X)$ for some $n\in\N$. By the Lemma \ref{basic},
for any positive integer $k \le n$, the measurable and topological
maximal factors of order $k$ coincide, which implies $Z_k$ is
measure theoretical isomorphic with $X_{k}=X/\RP^{[k]}(X)$. The
proof of the theorem is completed. \hfill $\square$
\medskip

\end{document}